%
\documentclass[12pt]{amsart}
\usepackage{amscd, amsfonts, amssymb, amsthm}
\usepackage{parskip, fullpage, verbatim}
\usepackage[colorlinks, breaklinks, linkcolor=blue]{hyperref} 
\usepackage{breakurl}
\usepackage{array}
\usepackage{booktabs}
\usepackage{wasysym} 


\newcommand\CA{{\mathcal A}}

\newcommand\CIF{{\mathcal {IF}}} 
 
\newcommand\CIFAC{{\mathcal {IF\!AC}}}

\newcommand\R{{\varrho}}

\newcommand\BBC{{\mathbb C}}
\newcommand\BBK{{\mathbb K}}

\newcommand {\GAP}{\textsf{GAP}}  
\newcommand {\CHEVIE}{\textsf{CHEVIE}}  
  
\newcommand {\Sage}{\textsf{SAGE}}


\newcommand\codim{\operatorname{codim}}

\newcommand\Fix{{\operatorname{Fix}}}

\newcommand\GL{\operatorname{GL}}

\newcommand\Poin{{\operatorname{Poin}}}




\numberwithin{equation}{section}

\theoremstyle{plain}
\newtheorem{lemma}[equation]{Lemma}
\newtheorem{theorem}[equation]{Theorem}

\newtheorem{corollary}[equation]{Corollary}
\newtheorem{proposition}[equation]{Proposition}
\theoremstyle{definition}
\newtheorem{defn}[equation]{Definition}
\newtheorem{remark}[equation]{Remark}


\thanks{We acknowledge 
support from the DFG-priority program 
SPP1489 ``Algorithmic and Experimental Methods in
Algebra, Geometry, and Number Theory''.}

\subjclass[2010]{20F55, 52B30, 52C35, 14N20}

\begin{document}

\title[Nice reflection arrangements]
{Nice reflection arrangements}

\author[T. Hoge]{Torsten Hoge}
\address
{Institut f\"ur Algebra, Zahlentheorie und Diskrete Mathematik,
Fakult\"at f\"ur Mathematik und Physik,
Leibniz Universit\"at Hannover,
Welfengarten 1,
30167 Hannover, Germany}
\email{hoge@math.uni-hannover.de}

\author[G. R\"ohrle]{Gerhard R\"ohrle}
\address
{Fakult\"at f\"ur Mathematik,
Ruhr-Universit\"at Bochum,
D-44780 Bochum, Germany}
\email{gerhard.roehrle@rub.de}

\keywords{
Complex reflection groups,
reflection arrangements, 
factored arrangements,
inductively factored arrangements}

\allowdisplaybreaks

\begin{abstract}
The aim of this note is a classification of all 
nice and all inductively factored 
reflection arrangements.
It turns out that 
apart from the supersolvable instances
only the monomial groups 
$G(r,r,3)$ for $r \ge 3$ give rise to 
nice reflection arrangements.
As a consequence of this and 
of the classification of all inductively 
free reflection arrangements from 
\cite[Thm.\ 1.1]{hogeroehrle:indfree}, 
we deduce that the class of all 
inductively factored reflection arrangements 
coincides with the
supersolvable reflection arrangements.

Moreover, we extend these classifications to 
hereditarily factored and  hereditarily inductively 
factored  reflection arrangements.
\end{abstract}

\maketitle


\section{Introduction}

Let $\BBK$ be a field and let $V = \BBK^\ell$.
Let $\CA = (\CA,V)$ be a central $\ell$-arrangement of hyperplanes in $V$ and 
let $L(\CA)$ be its intersection lattice.
Let $\pi = (\pi_1, \ldots, \pi_s)$ be a partition of $\CA$.
Then $\pi$ is called \emph{nice} for $\CA$ or a \emph{factorization} of $\CA$ 
if roughly speaking it partitions $\CA$
into mutually linearly independent sets and these are compatible with the 
intersection lattice $L(\CA)$, see Definition \ref{def:factored}
below.

In 1992, Terao \cite{terao:factored}
introduced the notion of a \emph{nice} or \emph{factored} arrangement
to provide a necessary and sufficient condition for the 
Orlik-Solomon Algebra $A(\CA)$ of $\CA$ to 
admit a tensor factorization as 
a graded $\BBK$-vector space.
More precisely,
let $[\pi_i]$ be the $\BBK$-subspace of $A(\CA)$ 
spanned by $1$ and the generators of $A(\CA)$ corresponding to  $\pi_i$.
In \cite[Thm.\ 2.8]{terao:factored}
(cf.\ \cite[Thm.\ 3.87]{orlikterao:arrangements}),
Terao proved that the map 
\[
\kappa : [\pi_1] \otimes \cdots \otimes [\pi_s]  \to  A(\CA) 
\]
given by multiplication is an  isomorphism of graded $\BBK$-vector spaces
if and only if $\pi$ is nice for $\CA$,
see Theorem \ref{thm:teraofactored} below.
As a consequence, if $\pi = (\pi_1, \ldots, \pi_s)$  is nice for 
$\CA$, then $s = r$, the rank of $\CA$, and 
the Poincar\'e polynomial of the Orlik-Solomon algebra $A(\CA)$ of $\CA$ factors into linear terms
as follows:
\[
\Poin(A(\CA),t) = \prod_{i=1}^r (1 + |\pi_i| t),
\]
\cite{terao:factored}
(cf.\ \cite[Cor.\ 3.88]{orlikterao:arrangements}).
Note that
if $\CA$ is free, then the Poincar\'e polynomial 
$\pi(\CA,t)$ of $L(\CA)$
factors into linear terms 
as follows:
\[
\pi(\CA,t) = \prod_{i=1}^\ell (1 + b_i t),
\]
where $\exp \CA = \{b_1, \ldots, b_\ell\}$ are the exponents of $\CA$,
\cite[Thm.\ 4.137]{orlikterao:arrangements}, see Theorem \ref{thm:freefactors}
below.
It is natural to pose the question whether every nice arrangement is
free, \cite{terao:factored}.  This however is not the case;
likewise,  a free arrangement need not be factored
in general, \cite{terao:factored}.

In \cite[Thm.\ 1.5]{hogeroehrle:factored}, we gave an 
analogue of Terao's celebrated
addition-deletion theorem for 
free arrangements for the class of 
nice arrangements, see Theorem \ref{thm:add-del-factored} below.

Terao  \cite{terao:factored} showed that 
every supersolvable arrangement is factored, 
see Proposition \ref{prop:ssfactored}.
Indeed, every supersolvable arrangement is inductively factored,
see Proposition \ref{prop:ssindfactored}.
Moreover, Jambu and Paris showed that each inductively factored 
arrangement is inductively free, see Proposition \ref{prop:indfactoredindfree}
(\cite[Prop.\ 2.2]{jambuparis:factored}). 
Each of these classes of arrangements is properly contained in the other,
see \cite[Rem.\ 3.32]{hogeroehrle:factored}.

Suppose that $W$ is a finite, unitary
reflection group acting on the complex 
vector space $V$.
Let $\CA(W) = (\CA(W),V)$ be the associated 
hyperplane arrangement of $W$.
We refer to $\CA(W)$ as a 
\emph{reflection arrangement}.
The aim of this paper is to 
classify all factored and all inductively factored
reflection arrangements $\CA(W)$. 

In view of the aforementioned containments,
we first recall the classifications of 
the inductively free and 
the supersolvable reflection arrangements, 
from \cite[Thm.\ 1.1]{hogeroehrle:indfree}
and \cite[Thm.\ 1.2]{hogeroehrle:super}, respectively.
Here and later on we use the classification and 
labelling of the irreducible 
unitary reflection groups due to
Shephard and Todd, \cite{shephardtodd}. 

\begin{theorem}
\label{thm:indfreerefl}
For $W$ a finite complex reflection group,  
the reflection arrangement $\CA(W)$ of $W$ is 
inductively free if and only if 
$W$ does not admit an irreducible factor
isomorphic to a monomial group 
$G(r,r,\ell)$ for $r, \ell \ge 3$, 
$G_{24}, G_{27}, G_{29}, G_{31}, G_{33}$, or $G_{34}$.
\end{theorem}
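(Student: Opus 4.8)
The strategy is to reduce to irreducible $W$, treat the infinite Shephard--Todd series and the exceptional groups separately, and concentrate the genuine difficulty in the negative assertions. Since $\CA(W)=\prod_i \CA(W_i)$ for the irreducible factors $W_i$ of $W$, and since a product of arrangements lies in the class $\CIF$ of inductively free arrangements if and only if each factor does, we have $\CA(W)\in\CIF$ if and only if $\CA(W_i)\in\CIF$ for every $i$. Thus I may assume $W$ irreducible, and by the Shephard--Todd classification \cite{shephardtodd} the candidates are the groups $G(r,p,\ell)$ together with the exceptional groups $G_4,\dots,G_{37}$.

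For the infinite series I would use that the reflection arrangement of $G(r,p,\ell)$ depends on $p$ only through the dichotomy $p<r$ versus $p=r$: for $p<r$ one has $\CA(G(r,p,\ell))=\CA(G(r,1,\ell))$, which is supersolvable and hence lies in $\CIF$ by Propositions \ref{prop:ssindfactored} and \ref{prop:indfactoredindfree}; for $p=r$ one obtains $\CA(G(r,r,\ell))$. The cases $r\le 2$ or $\ell\le 2$ of the latter are the Coxeter arrangements of types $A_{\ell-1}$ and $D_\ell$ and the rank-two dihedral arrangements, all inductively free, which singles out $G(r,r,\ell)$ with $r,\ell\ge 3$ as the only series candidate for failure. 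For the exceptional groups off the list I would exhibit explicit inductive chains: the rank-two groups $G_4,\dots,G_{22}$ are supersolvable; the Coxeter cases $H_3,F_4,H_4,E_6,E_7,E_8$ are inductively free; and the remaining $G_{25},G_{26},G_{32}$ are dispatched by producing a deletion--restriction chain, a finite verification resting on Terao's addition--deletion theorem for free arrangements \cite{orlikterao:arrangements} and conveniently carried out with \GAP\ and \CHEVIE.

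The heart of the proof is the negative direction: that $\CA(G(r,r,\ell))$ for $r,\ell\ge 3$ and the six arrangements $\CA(G_{24}),\CA(G_{27}),\CA(G_{29}),\CA(G_{31}),\CA(G_{33}),\CA(G_{34})$ are \emph{not} inductively free. Each of these is free (every reflection arrangement is), so freeness cannot be the obstruction and the failure must be located in the inductive recursion itself. Recall that a rank-$\ell$ arrangement $\CA$ lies in $\CIF$ precisely when some hyperplane $H$ satisfies $\CA\setminus\{H\}\in\CIF$, $\CA^H\in\CIF$, and $\exp(\CA^H)\subseteq\exp(\CA\setminus\{H\})$. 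For the six exceptional arrangements I would run this recursive membership test on the computer (the Barakat--Cuntz algorithm for $\CIF$), certifying that no chain of deletions ever meets the exponent-containment condition. For the series I would first establish a rank-reduction lemma, exploiting the restriction structure of these monomial arrangements to pass from arbitrary $\ell\ge 3$ to the base case $\ell=3$, and then prove, uniformly in $r$, that $\CA(G(r,r,3))$ is not inductively free: using that $G(r,r,3)$ acts with few orbits on its $3r$ hyperplanes, a computation of the restriction exponents shows that the containment $\exp(\CA^H)\subseteq\exp(\CA\setminus\{H\})$ fails for every admissible $H$, so the recursion can never be entered.

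The main obstacle is precisely the existential nature of inductive freeness: whereas membership in $\CIF$ is witnessed by a single chain, non-membership requires refuting \emph{all} chains. For the exceptional groups this is a finite but large search, handled by computer; for the series the difficulty is compounded by the need for an argument uniform in $r$, since deleting one hyperplane destroys the group symmetry and the recursion then branches over a tree of arrangements carrying no further reflection-group structure. Controlling this tree --- most economically by reducing to the rank-three base $G(r,r,3)$ and pinning down its restriction exponents, and by justifying the rank-reduction lemma that makes this reduction legitimate --- is the step I expect to require the most care.
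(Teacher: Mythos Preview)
Theorem \ref{thm:indfreerefl} is not proved in this paper at all: it is quoted from the authors' earlier work \cite{hogeroehrle:indfree} (see the sentence introducing it in the introduction) and is used here only as an input to Corollary \ref{cor:indfactoredrefl}. There is therefore no proof in the present paper against which to compare your proposal.

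That said, your outline matches the general shape of the argument in \cite{hogeroehrle:indfree}, with one caveat worth flagging. The positive direction is as you say: the Coxeter cases are due to Barakat--Cuntz \cite{cuntz:indfree}; for $p<r$ the arrangement $\CA(G(r,p,\ell))$ coincides with the supersolvable $\CA(G(r,1,\ell))$; and the remaining exceptional groups off the list are handled by exhibiting explicit inductive chains. The step that needs more care is your proposed ``rank-reduction lemma'' for the series $G(r,r,\ell)$. There is no general principle asserting that if a localization $\CA_X$ fails to lie in $\CIF$ then $\CA$ itself fails; indeed, Corollary \ref{cor:parabolicindfree} in the present paper is recorded only as a \emph{consequence} of Theorem \ref{thm:indfreerefl}, so invoking it to reduce $\ell$ to $3$ would be circular. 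You would need an independent argument that non-membership in $\CIF$ propagates upward along the chain $G(r,r,3)\subset G(r,r,4)\subset\cdots$, and this is not automatic: after a single deletion the subarrangement is no longer a reflection arrangement, and the recursion branches over arrangements with no parabolic structure to exploit. The argument in \cite{hogeroehrle:indfree} instead treats all $\ell\ge 3$ directly and uniformly, analysing the possible restrictions and their exponents without descending to a base case. Your sketch of the base case $\ell=3$ is fine on its own, but the reduction to it is the genuine gap.
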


The case for Coxeter groups in Theorem \ref{thm:indfreerefl}
is due to Barakat and Cuntz \cite{cuntz:indfree}.

\begin{theorem}
\label{thm:super}
For $W$ a finite
complex reflection group,  
$\CA(W)$ is 
supersolvable if and only if 
any irreducible factor of $W$ is
of rank at most $2$, 
is isomorphic either to 
a Coxeter group 
of type $A_\ell$ or $B_\ell$ for $\ell \ge 3$, or to a  
monomial group $G(r,p,\ell)$
for $r, \ell \ge 3$  and $p \ne r$.
\end{theorem}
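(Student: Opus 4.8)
The plan is to reduce to the irreducible case and then split the argument into positive results (exhibiting a modular chain) and negative results (ruling one out), treating the finitely many exceptional groups separately. First I would use that $\CA(W)$ is the product of the reflection arrangements $\CA(W_i)$ of the irreducible factors $W_i$ of $W$, so that $L(\CA(W)) = \prod_i L(\CA(W_i))$, together with the standard fact that a geometric lattice is supersolvable if and only if each of its direct factors is. Hence $\CA(W)$ is supersolvable precisely when every $\CA(W_i)$ is, and it suffices to classify the irreducible $W$. Every central arrangement of rank at most $2$ is supersolvable, since for any hyperplane $H$ the chain $\hat 0 < H < \hat 1$ consists of modular elements; this disposes of all irreducible $W$ of rank $\le 2$. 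From now on I assume $W$ irreducible of rank $\ell \ge 3$ and run through the Shephard--Todd list.

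For the positive results I would first record that for $1 \le p < r$, i.e. $p \ne r$, the reflection arrangement of $G(r,p,\ell)$ coincides with that of $G(r,1,\ell)$: both consist of the coordinate hyperplanes $x_i = 0$ together with all hyperplanes $x_i = \zeta x_j$ with $\zeta^r = 1$. The coordinate hyperplanes are present exactly when $p \ne r$, while a short computation with the transposition-type reflections shows that $x_i = \zeta x_j$ occurs for every $\zeta \in \mu_r$ and every $p \mid r$. Thus the cases ``$B_\ell = G(2,1,\ell)$'' and ``$G(r,p,\ell)$ with $r,\ell \ge 3$ and $p \ne r$'' are all governed by $\CA(G(r,1,\ell))$, while ``$A_\ell$'' is the braid arrangement. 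For each of these I would exhibit the strictly linear fibration obtained by forgetting the last coordinate: the projection $V \to V'$ realizes $\CA(G(r,1,\ell))$ over $\CA(G(r,1,\ell-1))$ with fiber a line punctured at the constant number $r(\ell-1)+1$ of points, and likewise for the braid arrangement. Since an arrangement is supersolvable if and only if it is fiber-type (a theorem of Terao), iterating this yields supersolvability; equivalently one checks directly that the coordinate flats $\{x_{k+1} = \dots = x_\ell = 0\}$ form a modular chain.

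It then remains to show that the surviving irreducible groups of rank $\ge 3$ are not supersolvable. The key structural input is that supersolvability passes to localizations: for $X \in L(\CA)$ the meets $Y_i \wedge X$ of a modular chain $(Y_i)$ of $L(\CA)$ furnish a modular chain of $L(\CA_X) = [\hat 0, X]$, so $\CA_X$ is again supersolvable; moreover, by Steinberg's theorem $\CA_X = \CA(W_X)$ for the parabolic subgroup $W_X$. Using this I would reduce the infinite family $G(r,r,\ell)$ ($\ell \ge 3$) to two base cases, namely $G(r,r,3)$ for $r \ge 3$ and $D_4 = G(2,2,4)$ along the line $r=2$ (recall $G(2,2,3) = A_3$ is already supersolvable), each of which arises as a standard parabolic. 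The base cases are handled in rank $3$ by the modular-coatom criterion: an essential rank-$3$ arrangement is supersolvable if and only if some rank-$2$ flat $X$ shares a hyperplane with every other rank-$2$ flat (for in a geometric lattice every atom is modular, so only the coatom condition is nontrivial, and modularity of a rank-$2$ flat $X$ means $\rank(X \wedge Y)=1$ for all rank-$2$ flats $Y$). For $G(r,r,3)$ the $3r$ hyperplanes $x_i = \zeta x_j$ produce a highly symmetric configuration of triple points (the dual Hesse configuration for $r=3$) in which, for any chosen point $X$, the hyperplanes through $X$ miss at least one further rank-$2$ flat; hence no modular point exists. The case $D_\ell$ ($\ell \ge 4$) reduces likewise to the non-supersolvability of $D_4$, verified in rank $4$. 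Finally the exceptional groups of rank $\ge 3$ form a finite list: the rank-$3$ groups $G_{23}=H_3$ and $G_{24},\dots,G_{27}$ are dispatched by the same modular-point analysis, while the higher-rank exceptionals $G_{28}=F_4$, $G_{29}$, $G_{30}=H_4$, $G_{31}$, $G_{32}$, $G_{33}$, $G_{34}$ and $G_{35}=E_6$, $G_{36}=E_7$, $G_{37}=E_8$ are ruled out either by producing a non-supersolvable low-rank localization or by a direct computation of the intersection lattice with the computer algebra systems.

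The main obstacle is the negative direction, and in particular the uniform treatment of the infinite family $G(r,r,\ell)$. The positive results are essentially explicit fibrations, and the exceptional groups are a finite if laborious check; but excluding a modular chain for all $G(r,r,\ell)$ at once requires both the reduction to the rank-$3$ base case via parabolic localization and a clean combinatorial count of the rank-$2$ flats of $\CA(G(r,r,3))$ together with their joins, establishing the absence of a modular point for every $r \ge 3$. A secondary but genuine difficulty is the careful bookkeeping of the low-rank coincidences $G(1,1,\ell) = A_{\ell-1}$, $G(2,1,\ell) = B_\ell$, $G(2,2,3) = A_3$ and $G(2,2,\ell) = D_\ell$, so that the final list emerges without redundancy exactly as stated in the theorem.
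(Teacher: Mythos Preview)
The paper does not contain a proof of this theorem at all: Theorem~\ref{thm:super} is explicitly recalled from the authors' earlier paper \cite[Thm.~1.2]{hogeroehrle:super}, and no argument is given here. So there is no in-paper proof to compare your proposal against.

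That said, your outline is essentially the standard strategy and is the one carried out in \cite{hogeroehrle:super}: reduction to the irreducible case via products, the positive direction via explicit modular chains (equivalently, fiber-type structures) for $\CA(G(r,1,\ell))$ and the braid arrangement, and the negative direction via the hereditary behaviour of supersolvability under localization to parabolic subgroups, reducing the infinite family $G(r,r,\ell)$ to the base cases $G(r,r,3)$ for $r\ge 3$ and $D_4$ for $r=2$, followed by a finite check of the exceptional groups. Your identification of the modular-coatom criterion in rank~$3$ as the decisive tool for the base cases and for the rank-$3$ exceptionals is correct. One small caution: your claim that for $1\le p<r$ the arrangements $\CA(G(r,p,\ell))$ and $\CA(G(r,1,\ell))$ coincide is exactly the point, but you should be explicit that the coordinate hyperplanes $x_i=0$ appear in $\CA(G(r,p,\ell))$ precisely when $p<r$; this is what separates the supersolvable from the non-supersolvable monomial families. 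Otherwise the bookkeeping of low-rank coincidences you list is accurate and your proposal would, when fleshed out, reproduce the classification.
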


We can now state our main classification results. 
Thanks to Proposition \ref{prop:product-factored},
the question whether $\CA$ is
nice reduces to the case when $\CA$ is irreducible.
Therefore, we may assume that $W$ is irreducible.
In view of Theorem \ref{thm:super} and 
Proposition \ref{prop:ssindfactored}, we can state our classification
results as follows:

\begin{theorem}
\label{thm:factoredrefl}
For $W$ a finite, irreducible, 
complex reflection group,  
$\CA(W)$ is nice if and only if 
either 
$\CA(W)$ is supersolvable or 
$W =  G(r,r,3)$ for $r \ge 3$.
\end{theorem}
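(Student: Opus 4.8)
The plan is to prove both implications, reducing throughout to the non-supersolvable case, since the supersolvable direction is immediate: by Proposition \ref{prop:ssindfactored} every supersolvable arrangement is inductively factored, hence nice. Thus for the forward (``if'') direction it remains to show that $\CA(G(r,r,3))$ is nice for $r \ge 3$. Here the rank is $3$ and $|\CA(G(r,r,3))| = 3r$, and since reflection arrangements are free, Theorem \ref{thm:freefactors} gives $\exp \CA(G(r,r,3)) = \{1, r+1, 2r-2\}$. By Theorem \ref{thm:teraofactored} any factorization must have blocks of exactly these cardinalities. I would exhibit an explicit partition $\pi = (\pi_1,\pi_2,\pi_3)$ of the $3r$ reflecting hyperplanes $x_i = \zeta x_j$ (with $1 \le i < j \le 3$ and $\zeta^r = 1$) satisfying $|\pi_1| = 1$, $|\pi_2| = r+1$, $|\pi_3| = 2r-2$, and verify Definition \ref{def:factored} by checking the induced partition on $\CA_X$ for every rank-two flat $X$; alternatively one builds $\pi$ from a supersolvable subarrangement by repeated use of the addition part of Theorem \ref{thm:add-del-factored}.

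For the reverse (``only if'') direction I must show that no other non-supersolvable irreducible $\CA(W)$ is nice. By Theorem \ref{thm:super} the non-supersolvable irreducible reflection arrangements of rank at least $3$ are exactly those of the monomial groups $G(r,r,\ell)$ with $r \ge 2$, $\ell \ge 3$ (where $G(2,2,\ell) = D_\ell$), together with the exceptional groups $G_{23}, G_{24}, \ldots, G_{37}$ of rank at least $3$; the rank-one and rank-two cases are all supersolvable. Since $G(r,r,3)$ is the asserted exception, the task splits into showing that $\CA(G(r,r,\ell))$ is not nice for $\ell \ge 4$ and that none of the exceptional arrangements is nice.

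For the monomial family the key tool is the local condition extracted from Definition \ref{def:factored}: in any factorization $\pi$, for every flat $X \in L(\CA)$ of rank two the hyperplanes of $\CA_X$ meet exactly two blocks of $\pi$. In $\CA(G(r,r,\ell))$ each index pair $(i,j)$ determines a rank-two flat $\{x_i = x_j = 0\}$ through which pass all $r$ hyperplanes $x_i = \zeta x_j$; hence each such ``pencil'' of $r$ hyperplanes is distributed among exactly two blocks. The $A_2$-type rank-two flats $x_i = \zeta_1 x_j = \zeta_1\zeta_2 x_k$, each carrying three hyperplanes, couple the partitions of distinct pencils. Using that the block sizes are forced to be $\exp \CA(G(r,r,\ell)) = \{1, r+1, 2r+1, \ldots, (\ell-2)r+1, (\ell-1)(r-1)\}$ by Theorem \ref{thm:freefactors}, I would first analyse the singleton block and then propagate the two-block constraint across the $\binom{\ell}{2}$ pencils via the $A_2$-relations. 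The main obstacle is precisely this step: one must show that for $\ell \ge 4$ the resulting system of pencil constraints admits no solution with the prescribed cardinalities, whereas for $\ell = 3$ it does, thereby isolating $G(r,r,3)$. I expect this combinatorial incompatibility, rather than either endpoint of the argument, to be the technical heart of the proof.

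Finally, for the finitely many exceptional groups of rank at least $3$ the niceness of $\CA(W)$ is a finite problem: a factorization must use blocks whose cardinalities are the exponents of the (free) arrangement, and the two-block condition at every rank-two flat can be tested directly on $L(\CA)$. I would dispose of these cases by explicit computation with \GAP\ and \CHEVIE, checking that no partition with the admissible block sizes satisfies Definition \ref{def:factored}; where feasible the search is pruned using the rank-two constraint above and by passing to restrictions onto rank-three parabolic subarrangements. Combining the construction for $G(r,r,3)$, the non-existence for $G(r,r,\ell)$ with $\ell \ge 4$, and the computations for the exceptional groups then yields the stated classification.
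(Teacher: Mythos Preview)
Your overall strategy is correct and matches the paper's: exhibit an explicit factorization for $G(r,r,3)$, then rule out every other non-supersolvable irreducible $W$ by combinatorial analysis of rank-$2$ flats, supplemented by direct computation for the exceptional groups. The paper's factorization of $\CA(G(r,r,3))$ is exactly the explicit partition you envisage.

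The one substantive simplification you miss concerns the monomial family $G(r,r,\ell)$, $\ell\ge 4$, which you flag as the technical heart. The paper avoids your proposed global analysis of all $\binom{\ell}{2}$ pencils against the exponent multiset entirely, using instead the parabolic inheritance of niceness (Lemma~\ref{lem:parabolicfactored}): since $G(r,r,4)$ is a parabolic subgroup of $G(r,r,\ell)$ for every $\ell\ge 5$, it suffices to treat $\ell=4$. For $G(r,r,4)$ the paper's argument is two lines: the coordinate pairs $(1,2)$ and $(3,4)$ give two pencils whose members pairwise span rank-$2$ flats of size $2$, forcing these two pencils into four different blocks; the pairs $(1,3)$ and $(2,4)$ do the same, so every block of $\pi$ contains at least two hyperplanes, contradicting the mandatory singleton (Remark~\ref{rem:factored}(iii)). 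No exponent information is needed. The same parabolic trick disposes of $E_6,E_7,E_8,G_{33},G_{34}$ (each contains a $D_4$ parabolic) and of $H_4,G_{32}$ (via $H_3,G_{25}$), so only $H_3,G_{24},G_{25},G_{26},G_{27},F_4,G_{29},G_{31}$ require the kind of ad hoc rank-$2$ analysis you describe, and in each case the paper gives a short hand argument rather than a full search.

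A minor terminological point: what you call ``restrictions onto rank-three parabolic subarrangements'' are localizations $\CA_X=\CA(W_X)$, not restrictions $\CA^X$; it is the former that inherit niceness (Remark~\ref{rem:factored}(ii)).
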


Thanks to Proposition \ref{prop:product-indfactored},
the question whether $\CA$ is
inductively factored reduces to the case when $\CA$ is irreducible.
The classification of the inductively factored reflection arrangements is 
thus an immediate consequence of 
Theorems  \ref{thm:indfreerefl}, \ref{thm:factoredrefl} and 
Proposition 
\ref{prop:indfactoredindfree}.

\begin{corollary}
\label{cor:indfactoredrefl}
For $W$ a finite, complex reflection group,  $\CA(W)$
is inductively factored if and only if it is supersolvable. 
\end{corollary}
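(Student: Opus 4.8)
The plan is to reduce to the irreducible case and then read the result off the two classification theorems. Both properties in question are detected on irreducible factors: by Proposition \ref{prop:product-indfactored} the arrangement $\CA(W)$ is inductively factored precisely when the arrangement of each irreducible factor of $W$ is, and by Theorem \ref{thm:super} the same holds for supersolvability. Hence it suffices to prove the corollary when $W$ is irreducible, and I would begin by making this reduction.

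So assume $W$ is irreducible. If $\CA(W)$ is supersolvable then it is inductively factored by Proposition \ref{prop:ssindfactored}; this settles one direction. Conversely, suppose $\CA(W)$ is inductively factored. Then it is in particular nice, so Theorem \ref{thm:factoredrefl} leaves exactly two possibilities: either $\CA(W)$ is supersolvable, or $W = G(r,r,3)$ for some $r \ge 3$.

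The decisive step is to rule out the family $G(r,r,3)$. Since $\CA(W)$ is inductively factored, Proposition \ref{prop:indfactoredindfree} shows it is inductively free. But $G(r,r,3)$ with $r \ge 3$ is a monomial group $G(r,r,\ell)$ with $r,\ell \ge 3$ (taking $\ell = 3$), so by Theorem \ref{thm:indfreerefl} its reflection arrangement is \emph{not} inductively free. This contradiction eliminates the exceptional case and forces $\CA(W)$ to be supersolvable, completing the irreducible case and hence the proof.

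I do not expect a genuine obstacle, since every step is a direct appeal to an already established result; the real content sits in Theorems \ref{thm:factoredrefl} and \ref{thm:indfreerefl}. The only point worth care is that both ``inductively factored'' and ``supersolvable'' behave well under taking products, so that the reduction to irreducible $W$ is legitimate; this is exactly what Proposition \ref{prop:product-indfactored} and Theorem \ref{thm:super} provide.
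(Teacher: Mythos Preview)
Your proof is correct and follows essentially the same route as the paper, which states the corollary as an immediate consequence of Theorems \ref{thm:indfreerefl}, \ref{thm:factoredrefl} and Proposition \ref{prop:indfactoredindfree}; you have simply spelled out the reduction to the irreducible case and the elimination of $G(r,r,3)$ explicitly.
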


In contrast to Corollary \ref{cor:indfactoredrefl}, 
\cite[Ex.\ 3.19]{hogeroehrle:factored} shows that 
among restrictions of reflection arrangements 
there are inductively factored instances which are 
not supersolvable.  

A special case of a result due to Stanley implies that supersolvability of 
$\CA$ is inherited by 
all restrictions $\CA^X$, cf.\ \cite[Prop.\ 3.2]{stanley:super}.
While arbitrary inductively free arrangements
are not hereditary, cf.\ \cite[Ex.\ 2.16]{hogeroehrle:indfree}
and likewise for inductively factored arrangements, 
cf.\ \cite[Ex.\ 3.27]{hogeroehrle:factored},  
for reflection arrangements, inductive freeness is hereditary, 
\cite[Thm.\ 1.2]{hogeroehrle:indfree}.
Thus it is rather natural to ask whether the 
properties of being factored or inductively 
factored are also hereditary among reflection arrangements. 
These questions are answered in our next results: 

\begin{theorem}
\label{thm:heredfactored}
For $W$ a finite 
complex reflection group,  
$\CA(W)$ is factored
if and only if $\CA(W)$ is hereditarily factored.
\end{theorem}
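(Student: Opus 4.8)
The plan is to prove Theorem~\ref{thm:heredfactored}, which asserts that for a finite complex reflection group $W$, the reflection arrangement $\CA(W)$ is factored if and only if it is hereditarily factored. One direction is trivial: hereditary factoredness requires every restriction $\CA(W)^X$ for $X \in L(\CA(W))$ to be factored, and since $\CA(W)$ itself is one such restriction (taking $X = V$), hereditarily factored clearly implies factored. The substance of the theorem is the reverse implication, so I would focus entirely on showing that if $\CA(W)$ is factored, then every restriction $\CA(W)^X$ is factored as well.

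My approach would be to combine the classification in Theorem~\ref{thm:factoredrefl} with the analogous hereditary result for inductive freeness. By Proposition~\ref{prop:product-factored} the problem reduces to the irreducible case, so assume $W$ is irreducible and $\CA(W)$ is factored. Theorem~\ref{thm:factoredrefl} then splits into two cases: either $\CA(W)$ is supersolvable, or $W = G(r,r,3)$ for some $r \ge 3$. In the supersolvable case, the cited result of Stanley (\cite[Prop.\ 3.2]{stanley:super}) guarantees that every restriction $\CA(W)^X$ is again supersolvable, and supersolvable arrangements are factored by Proposition~\ref{prop:ssfactored}; hence $\CA(W)$ is hereditarily factored. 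This disposes of the supersolvable case cleanly, so the real content lies in handling the remaining family $W = G(r,r,3)$.

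For $W = G(r,r,3)$ with $r \ge 3$, I would analyze the restrictions $\CA(W)^X$ directly according to the rank of $X$. The restriction to $X = V$ is $\CA(W)$ itself, which is factored by hypothesis, and for $X$ of corank $0$ (i.e.\ $X = V$) or $X$ a full-dimensional point giving the empty arrangement there is nothing to check. Since $\ell = 3$, the nontrivial restrictions have rank $1$ or rank $2$. Every rank~$1$ arrangement is trivially factored, and every rank~$2$ arrangement is supersolvable hence factored. Thus the only genuinely delicate point is to confirm that the intermediate restrictions to the rank-one and rank-two flats $X \in L(\CA(W))$ indeed produce arrangements in these low-rank regimes; because $\ell = 3$, this is automatic, and the hereditary property follows. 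Here I would lean on the explicit description of the flats and restrictions of $\CA(G(r,r,3))$ used in the proof of Theorem~\ref{thm:factoredrefl} and in \cite{hogeroehrle:indfree,orliksolomon:unitaryreflectiongroups}.

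The step I expect to be the main obstacle is verifying factoredness of the rank-$2$ restrictions in a way that is uniform in $r$, should one wish to avoid invoking the general fact that all rank-$2$ arrangements are factored. If one does invoke it, the proof becomes essentially immediate since $\ell = 3$ forces all proper restrictions into rank $\le 2$; the theorem then reduces to assembling Proposition~\ref{prop:ssfactored}, Stanley's restriction result, and the low-rank triviality. I would therefore organize the argument around the rank bound $\ell = 3$ as the crucial structural feature that collapses the hereditary question for $G(r,r,3)$ to the trivially factored low-rank cases, making the overall proof short.
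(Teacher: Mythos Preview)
Your proposal is correct and follows essentially the same approach as the paper: reduce to the irreducible case via the product compatibility, invoke the classification of Theorem~\ref{thm:factoredrefl}, handle the supersolvable case via Stanley's restriction result, and treat $G(r,r,3)$ by noting that all proper restrictions of a $3$-arrangement are $\le 2$-arrangements and hence factored. The only cosmetic difference is that the paper packages this last observation as Lemma~\ref{lem:3-arr} (any factored $3$-arrangement is hereditarily factored) rather than spelling it out inline, and it cites Corollary~\ref{cor:product-heredindfactored} rather than Proposition~\ref{prop:product-factored} for the reduction to the irreducible case---the latter is needed to pass back from hereditarily factored factors to a hereditarily factored product.
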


The following is an easy consequence of 
Corollary \ref{cor:indfactoredrefl} and 
Stanley's result \cite[Prop.\ 3.2]{stanley:super}.

\begin{corollary}
\label{cor:indheredfactored}
For $W$ a finite 
complex reflection group,  
$\CA(W)$ is inductively factored
if and only if $\CA(W)$ is hereditarily inductively factored.
\end{corollary}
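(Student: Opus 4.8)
The plan is to prove Corollary \ref{cor:indheredfactored} by combining the
just-established classification of inductively factored reflection
arrangements (Corollary \ref{cor:indfactoredrefl}) with the hereditary
behaviour of supersolvability. Since ``hereditarily inductively factored''
trivially implies ``inductively factored'' (taking the restriction at
$X = V$), only the forward implication requires work: I must show that if
$\CA(W)$ is inductively factored, then every restriction $\CA(W)^X$ for
$X \in L(\CA(W))$ is again inductively factored.

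First I would invoke Corollary \ref{cor:indfactoredrefl} to replace the
hypothesis ``$\CA(W)$ is inductively factored'' by the equivalent and more
tractable condition ``$\CA(W)$ is supersolvable.'' Next I would apply
Stanley's result \cite[Prop.\ 3.2]{stanley:super}, which guarantees that
supersolvability is inherited by all restrictions: every $\CA(W)^X$ is
supersolvable. At this point I would like to close the loop by once more
invoking Corollary \ref{cor:indfactoredrefl} to conclude that each
supersolvable $\CA(W)^X$ is inductively factored. The subtlety here is that
$\CA(W)^X$ is a restriction of a reflection arrangement, and is not itself a
priori a reflection arrangement, whereas Corollary \ref{cor:indfactoredrefl}
is stated for reflection arrangements $\CA(W)$. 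So I cannot directly reapply
it to $\CA(W)^X$.

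This is the step I expect to be the main (and in fact only) obstacle, and it
is resolved not by the reflection-arrangement classification but by the more
basic implication in Proposition \ref{prop:ssindfactored}: every
supersolvable arrangement is inductively factored. That proposition applies to
arbitrary arrangements, hence in particular to the restriction $\CA(W)^X$,
regardless of whether it is a reflection arrangement. Thus the correct chain
is: $\CA(W)$ inductively factored $\Rightarrow$ $\CA(W)$ supersolvable
(Corollary \ref{cor:indfactoredrefl}) $\Rightarrow$ every $\CA(W)^X$
supersolvable (Stanley) $\Rightarrow$ every $\CA(W)^X$ inductively factored
(Proposition \ref{prop:ssindfactored}), which is precisely the assertion that
$\CA(W)$ is hereditarily inductively factored.

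Finally I would record the converse as immediate: restricting at the full
ambient space $V$ shows that hereditarily inductively factored implies
inductively factored, completing the equivalence. The entire argument is
short precisely because the heavy lifting has already been done in
Corollary \ref{cor:indfactoredrefl}; the only conceptual point worth
flagging in the write-up is the deliberate use of the general statement
Proposition \ref{prop:ssindfactored} rather than the
reflection-arrangement-specific Corollary \ref{cor:indfactoredrefl} for the
restrictions, since the latter does not apply to them.
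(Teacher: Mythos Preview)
Your proposal is correct and follows essentially the same approach as the paper, which simply notes that the result ``is an easy consequence of Corollary~\ref{cor:indfactoredrefl} and Stanley's result \cite[Prop.~3.2]{stanley:super}.'' You are in fact more careful than the paper in making explicit that the final step for the restrictions $\CA(W)^X$ must invoke the general implication of Proposition~\ref{prop:ssindfactored} rather than the reflection-arrangement-specific Corollary~\ref{cor:indfactoredrefl}.
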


The paper is organized as follows.
Sections \ref{ssect:hyper} and \ref{ssect:os} 
recall basic 
notions and results for general hyperplane arrangements 
and their associated Orlik-Solomon algebras.
Subsequently, we recall relevant 
concepts of free, inductively free and supersolvable 
arrangements in Sections \ref{ssect:free} and \ref{ssect:super}.
All this is taken from \cite{orlikterao:arrangements}.
Sections \ref{ssect:factored} and \ref{ssect:indfactored}
revisit the concepts and main results on 
nice and inductively factored arrangements from 
\cite{terao:factored},
\cite{jambuparis:factored}, and \cite{hogeroehrle:factored}.
This is followed by a short recollection on 
hereditarily (inductively) factored arrangements 
in Section \ref{sect:heredfactored} from 
\cite{hogeroehrle:factored}.
In Section \ref{ssect:refl},
we discuss some required results on reflection arrangements.
Finally, Theorems \ref{thm:factoredrefl} and  
\ref{thm:heredfactored} 
are proved in Section \ref{sect:proof}.
 
For general information about arrangements 
and reflection groups we refer
the reader to 
\cite{orlikterao:arrangements},
\cite{bourbaki:groupes},
\cite{orliksolomon:unitaryreflectiongroups} and 
and \cite[\S 4, \S6]{orlikterao:arrangements}.

\section{Recollections and Preliminaries}

\subsection{Hyperplane Arrangements}
\label{ssect:hyper}
Let $V = \BBK^\ell$ 
be an $\ell$-dimensional $\BBK$-vector space.
A \emph{hyperplane arrangement} is a pair
$(\CA, V)$, where $\CA$ is a finite collection of hyperplanes in $V$.
Usually, we simply write $\CA$ in place of $(\CA, V)$.
We only consider central arrangements, i.e.\ $0 \in H$ for every $H \in \CA$.
We write $|\CA|$ for the number of hyperplanes in $\CA$.
The empty arrangement in $V$ is denoted by $\Phi_\ell$.

The \emph{lattice} $L(\CA)$ of $\CA$ is the set of subspaces of $V$ of
the form $H_1\cap \dotsm \cap H_r$ where $\{ H_1, \ldots, H_r \}$ is a subset
of $\CA$. 
For $X \in L(\CA)$, we have two associated arrangements, 
firstly the subarrangement 
$\CA_X :=\{H \in \CA \mid X \subseteq H\} \subseteq \CA$
of $\CA$ and secondly, 
the \emph{restriction of $\CA$ to $X$}, $(\CA^X,X)$, where 
$\CA^X := \{ X \cap H \mid H \in \CA \setminus \CA_X\}$.
Note that $V$ belongs to $L(\CA)$
as the intersection of the empty 
collection of hyperplanes and $\CA^V = \CA$. 
The lattice $L(\CA)$ is a partially ordered set by reverse inclusion:
$X \le Y$ provided $Y \subseteq X$ for $X,Y \in L(\CA)$.
We have a \emph{rank} function on $L(\CA)$: $r(X) := \codim_V(X)$.
The \emph{rank} $r(\CA)$ of $\CA$ is the rank of a maximal element in $L(\CA)$ with respect
to the partial order.
With this definition $L(\CA)$ is a geometric lattice, 
\cite[p.\ 24]{orlikterao:arrangements}.
Let $T_\CA = \cap_{H \in \CA} H$ be the center of $L(\CA)$.
If $\CA$ is \emph{central}, then $0 \in T_\CA$.
The $\ell$-arrangement $\CA$ is called \emph{essential} provided $r(\CA) = \ell$.
If $\CA$ is essential and central, then $T_\CA =\{0\}$.

Let $\CA$ be central and let
$X, Y \in L(\CA)$ with $X < Y$. 
We recall the following sublattices of $L(\CA)$ from 
\cite[Def.\ 2.10]{orlikterao:arrangements},
$L(\CA)_Y :=\{ Z \in L(\CA) \mid Z \le Y\}$, 
$L(\CA)^X :=\{ Z \in L(\CA) \mid X \le Z\}$,
and the \emph{interval}
$[X,Y] := L(\CA)_Y \cap L(\CA)^X = \{ Z \in L(\CA) \mid X \le Z \le Y\}$.

For $\CA \ne \Phi_\ell$, 
let $H_0 \in \CA$.
Define $\CA' := \CA \setminus\{ H_0\}$,
and $\CA'' := \CA^{H_0} = \{ H_0 \cap H \mid H \in \CA'\}$.
Then $(\CA, \CA', \CA'')$ is a \emph{triple} of arrangements,
\cite[Def.\ 1.14]{orlikterao:arrangements}.

The \emph{product}
$\CA = (\CA_1 \times \CA_2, V_1 \oplus V_2)$ 
of two arrangements $(\CA_1, V_1), (\CA_2, V_2)$
is defined by
\begin{equation*}
\label{eq:product}
\CA = \CA_1 \times \CA_2 := \{H_1 \oplus V_2 \mid H_1 \in \CA_1\} \cup 
\{V_1 \oplus H_2 \mid H_2 \in \CA_2\},
\end{equation*}
see \cite[Def.\ 2.13]{orlikterao:arrangements}.

Note that 
$\CA \times \Phi_0 = \CA$
for any arrangement $\CA$. 
If $\CA$ is of the form $\CA = \CA_1 \times \CA_2$, where 
$\CA_i \ne \Phi_0$ for $i=1,2$, then $\CA$
is called \emph{reducible}, else $\CA$
is \emph{irreducible}, 
\cite[Def.\ 2.15]{orlikterao:arrangements}.

\subsection{The Orlik-Solomon Algebra of an Arrangement}
\label{ssect:os}

The most basic algebraic invariant associated with an arrangement $\CA$ is its
so called \emph{Orlik-Solomon algebra} $A(\CA)$, introduced by Orlik and Solomon in \cite{orliksolomon:hyperplanes}.
The $\BBK$-algebra $A(\CA)$ is a graded and anti-commutative.
It is generated by $1$ in degree $0$ and by 
a set of degree $1$ generators $\{a_H \mid  H \in \CA\}$, e.g.\ see 
\cite[\S 3.1]{orlikterao:arrangements}.
Let $A(\CA) = \oplus_{i=0}^r A(\CA)_i$ be the decomposition of
$A(\CA)$ into homogeneous components, 
so that $\Poin(A(\CA), t) = \sum_{i=0}^r (\dim A(\CA)_i)t^i$,
where $r = r(\CA)$ is the rank of $\CA$.
In particular, $A(\CA)_0 = \BBK$ and  $A(\CA)_1 = \sum_{H\in \CA} \BBK a_H$.

Thanks to a fundamental result due to Orlik and Solomon
\cite[Thm.\ 2.6]{orliksolomon:hyperplanes} 
(cf.\ \cite[Thm.\ 3.68]{orlikterao:arrangements}),
the Poincar\'e polynomial of 
$A(\CA)$ 
coincides with the combinatorially defined Poincar\'e polynomial 
$\pi(\CA,t)$ of $\CA$,
\begin{equation*}
\label{eq:piA}
\Poin(A(\CA), t) = \pi(\CA,t).
\end{equation*}

The geometric significance of $A(\CA)$ stems 
from the fact that in case $\BBK = \BBC$ is the field of complex numbers,
Orlik and Solomon showed in 
\cite[Thm.\ 5.2]{orliksolomon:hyperplanes} 
that as an associative, graded $\BBC$-algebra $A(\CA)$
is isomorphic to 
the cohomology algebra of the complement $M(\CA)$ of 
the complex arrangement $\CA$ 
(cf.\ \cite[\S 5.4]{orlikterao:arrangements}):
\[
A(\CA) \cong H^*(M(\CA)).
\]
In particular, the Poincar\'e polynomial $\Poin(M(\CA), t)$ of $M(\CA)$
is given by $\Poin(A(\CA), t)$. 

Let $\pi = (\pi_1, \ldots, \pi_s)$ be a partition of $\CA$ and let 
\[
[\pi_i] := \BBK + \sum_{H\in \pi_i} \BBK a_H
\] 
be the $\BBK$-subspace of $A(\CA)$ 
spanned by $1$ and the set of $\BBK$-algebra generators $a_H$ 
of $A(\CA)$ corresponding to the members in $\pi_i$.
So the Poincar\'e polynomial of the graded $\BBK$-vector 
space $[\pi_i]$ is just  
$\Poin([\pi_i],t) = 1 + | \pi_i| t$.
Consider the canonical $\BBK$-linear map 
\begin{equation}
\label{eq:factoredOS}
\kappa : [\pi_1] \otimes \cdots \otimes [\pi_s] \to A(\CA)
\end{equation}
given by multiplication. We say that $\pi$ gives
rise to a \emph{tensor factorization} of $A(\CA)$ 
if $\kappa$ is an isomorphism of graded $\BBK$-vector spaces. In this case
$s = r$, as $r$ is the top degree of $A(\CA)$, and thus we get 
a factorization of the Poincar\'e polynomial of $A(\CA)$ into linear terms
\begin{equation*}
\label{eq:poinOS}
\Poin(A(\CA),t) = \prod_{i=1}^r (1 + |\pi_i| t).
\end{equation*}
For $\CA = \Phi_\ell$ the empty arrangement, 
we set $[\varnothing] := \BBK$, so that 
$\kappa : [\varnothing] \cong A(\Phi_\ell)$.

In \cite[Thm.\ 5.3]{orliksolomonterao:hyperplanes},
Orlik, Solomon and Terao showed that 
a supersolvable arrangement $\CA$ admits a partition 
$\pi$  which gives rise to a tensor factorization of $A(\CA)$
via $\kappa$  in \eqref{eq:factoredOS} 
(cf.\ \cite[Thm.\ 3.81]{orlikterao:arrangements});
see Proposition \ref{prop:ssfactored} below.

In \cite[Thm.\ 2.8]{terao:factored},
Terao proved that 
$\pi$ gives
rise to a tensor factorization of the Orlik-Solomon algebra $A(\CA)$
via $\kappa$ as in \eqref{eq:factoredOS}
if and only if 
$\pi$ is 
\emph{nice} for $\CA$ (Definition \ref{def:factored}), 
see Theorem \ref{thm:teraofactored}
(cf.\ \cite[Thm.\ 3.87]{orlikterao:arrangements}).
So nice arrangements are a generalization of 
supersolvable ones.
Note that $\kappa$ is not an isomorphism of $\BBK$-algebras.

\subsection{Free  and inductively free Arrangements}
\label{ssect:free}

Free arrangements play a crucial role in the theory of arrangements.
See \cite[\S 4]{orlikterao:arrangements} for the definition and 
basic properties of free arrangements. If $\CA$ is free, then 
we can associate with $\CA$ the multiset of its \emph{exponents}, 
denoted $\exp \CA$. 

Owing to \cite[Prop.\ 4.28]{orlikterao:arrangements}, 
free arrangements behave well with respect to 
the  product construction for arrangements.

\begin{proposition}
\label{prop:product-free}
Let $\CA_1, \CA_2$ be two arrangements.
Then  $\CA = \CA_1 \times \CA_2$ is free
if and only if both 
$\CA_1$ and $\CA_2$ are free and in that case
the multiset of exponents of $\CA$ is given by 
$\exp \CA = \{\exp \CA_1, \exp \CA_2\}$.
\end{proposition}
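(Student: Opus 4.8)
The plan is to prove Proposition \ref{prop:product-free} by reducing the freeness of the product $\CA = \CA_1 \times \CA_2$ to an explicit description of its module of derivations $\Der(\CA)$ in terms of the corresponding modules for the factors. Write $V = V_1 \oplus V_2$ and choose coordinates adapted to this decomposition, say $x_1, \ldots, x_k$ on $V_1$ and $y_1, \ldots, y_m$ on $V_2$ (with $k + m = \ell$). The defining polynomial $Q(\CA)$ of the product factors as $Q(\CA) = Q(\CA_1) \cdot Q(\CA_2)$, where by a slight abuse $Q(\CA_i)$ is regarded as a polynomial on $V$ that only involves the coordinates of $V_i$. The first step is to make this precise: each hyperplane $H_1 \in \CA_1$ contributes the hyperplane $H_1 \oplus V_2$ whose defining linear form involves only the $x$-variables, and symmetrically for $\CA_2$.

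The key structural step is to establish the natural $S = \BBK[V]$-module isomorphism
\[
\Der(\CA) \cong \big(S \otimes_{S_1} \Der(\CA_1)\big) \oplus \big(S \otimes_{S_2} \Der(\CA_2)\big),
\]
where $S_i = \BBK[V_i]$. Concretely, a derivation $\theta \in \Der(V)$ decomposes uniquely as $\theta = \theta_x + \theta_y$, where $\theta_x$ is an $S$-combination of the $\partial/\partial x_j$ and $\theta_y$ an $S$-combination of the $\partial/\partial y_j$. The point is to check that $\theta \in \Der(\CA)$, i.e.\ $\theta(Q(\CA)) \in Q(\CA) \cdot S$, if and only if $\theta_x$ preserves the ideal generated by $Q(\CA_1)$ and $\theta_y$ preserves the ideal generated by $Q(\CA_2)$; this follows from the product rule applied to $Q(\CA) = Q(\CA_1)Q(\CA_2)$ together with the observation that $\theta_x$ kills $Q(\CA_2)$ and $\theta_y$ kills $Q(\CA_1)$, since those polynomials depend on disjoint variable sets. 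From this one reads off that $\Der(\CA)$ is free over $S$ exactly when both summands are free, which via the faithfully-flat base change $S_i \hookrightarrow S$ is equivalent to $\Der(\CA_1)$ being free over $S_1$ and $\Der(\CA_2)$ free over $S_2$.

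Once the module decomposition is in place, the statement about exponents is immediate: a homogeneous basis $\theta_1, \ldots, \theta_k$ of $\Der(\CA_1)$ and $\eta_1, \ldots, \eta_m$ of $\Der(\CA_2)$, of polynomial degrees recording $\exp \CA_1$ and $\exp \CA_2$ respectively, together furnish a homogeneous $S$-basis of $\Der(\CA)$ of the same degrees, so that $\exp \CA = \{\exp \CA_1, \exp \CA_2\}$ as multisets. Here one uses that tensoring up along $S_i \hookrightarrow S$ preserves both freeness and the degrees of a homogeneous basis.

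The main obstacle I anticipate is the careful bookkeeping in the module-decomposition step: one must verify that the decomposition $\theta = \theta_x + \theta_y$ is compatible with the defining divisibility condition for $\Der(\CA)$, and in particular that no cross-terms arise. This hinges on the separation of variables — that $Q(\CA_1)$ and $Q(\CA_2)$ involve disjoint coordinates — which makes the two preservation conditions genuinely independent rather than coupled. Since this is exactly the content cited from \cite[Prop.\ 4.28]{orlikterao:arrangements}, I would either reproduce that argument or simply invoke it; the remaining assertions are then formal consequences.
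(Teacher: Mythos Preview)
Your proposal is correct and is precisely the argument of \cite[Prop.\ 4.28]{orlikterao:arrangements}, which the paper simply cites without reproducing. The only step that deserves a word of care is the converse direction---concluding freeness of each $\Der(\CA_i)$ from that of $\Der(\CA)$---where your appeal to faithfully flat descent (together with the fact that graded projective modules over a polynomial ring are free) is one valid route; Orlik--Terao instead argue via Saito's criterion and a block-diagonal coefficient matrix, which avoids the descent step.
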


Terao's celebrated \emph{Addition-Deletion Theorem} 
\cite{terao:freeI} plays a 
pivotal role in the study of free arrangements, 
\cite[Thm.\ 4.51]{orlikterao:arrangements}.

\begin{theorem}
\label{thm:add-del}
Suppose that $\CA \ne \Phi_\ell$.
Let  $(\CA, \CA', \CA'')$ be a triple of arrangements. Then any 
two of the following statements imply the third:
\begin{itemize}
\item[(i)] $\CA$ is free with $\exp \CA = \{ b_1, \ldots , b_{\ell -1}, b_\ell\}$;
\item[(ii)] $\CA'$ is free with $\exp \CA' = \{ b_1, \ldots , b_{\ell -1}, b_\ell-1\}$;
\item[(iii)] $\CA''$ is free with $\exp \CA'' = \{ b_1, \ldots , b_{\ell -1}\}$.
\end{itemize}
\end{theorem}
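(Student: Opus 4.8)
The plan is to translate freeness into the module of $\BBK$-derivations $D(\CA)\subseteq\Der(S)$, where $S=\BBK[x_1,\dots,x_\ell]$ is the coordinate ring of $V$, and then to exploit the fundamental exact sequence attached to a triple. Recall from \cite[\S 4]{orlikterao:arrangements} that $\CA$ is free precisely when the graded $S$-module $D(\CA)$ is free, in which case $\exp\CA$ is the multiset of degrees of any homogeneous basis, and that \emph{Saito's criterion} detects bases: homogeneous $\theta_1,\dots,\theta_\ell\in D(\CA)$ form an $S$-basis if and only if their coefficient determinant $\det(\theta_i(x_j))$ is a nonzero scalar multiple of the defining polynomial $Q(\CA)=\prod_{H\in\CA}\alpha_H$; equivalently, if and only if $\sum_i\deg\theta_i=|\CA|$ and the $\theta_i$ are $S$-linearly independent.

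First I would fix the distinguished hyperplane $H_0\in\CA$ with defining form $\alpha$, so that $\CA'=\CA\setminus\{H_0\}$ and $\CA''=\CA^{H_0}$ lives in $H_0$ with coordinate ring $\overline{S}=S/\alpha S$. The structural heart is the restriction map $\rho\colon D(\CA)\to D(\CA'')$: every $\theta\in D(\CA)$ is tangent to $H_0$, hence descends to a derivation of $\overline{S}$, and one checks that this descended derivation is tangent to $\CA''$. Computing its kernel yields the left-exact sequence of graded $S$-modules
\[
0\longrightarrow \alpha D(\CA')\longrightarrow D(\CA)\stackrel{\rho}{\longrightarrow} D(\CA''),
\]
where $\alpha D(\CA')$ is the image of $D(\CA')$ under multiplication by $\alpha$ and sits inside $D(\CA)$ by inclusion (note $\alpha D(\CA')\subseteq D(\CA)\subseteq D(\CA')$).

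With this sequence in hand I would treat the three implications together. In each case the two given freeness statements determine the Poincar\'e series of two of the modules, and the sequence forces that of the third: from $\Poin(D(\CA),t)=t\,\Poin(D(\CA'),t)+\Poin(\operatorname{im}\rho,t)$ together with $\operatorname{im}\rho\subseteq D(\CA'')$, comparison with the predicted series shows $\operatorname{im}\rho=D(\CA'')$, so $\rho$ is onto and the sequence is short exact. The displayed exponents, including the shift $b_\ell\mapsto b_\ell-1$ for $\CA'$ produced by the degree-one factor $\alpha$, drop out of this computation. To promote the resulting short exact sequence $0\to\alpha D(\CA')\to D(\CA)\to D(\CA'')\to 0$ to freeness of the unknown term, I would apply Saito's criterion constructively: for instance, to prove (ii) and (iii) imply (i), I lift a homogeneous basis of $D(\CA'')$ through $\rho$ and adjoin the $\alpha$-multiple of the degree-$(b_\ell-1)$ basis element of $D(\CA')$, obtaining $\ell$ homogeneous derivations in $D(\CA)$ whose degrees sum to $|\CA|=\deg Q(\CA)$; since each $\alpha_H$ divides their coefficient determinant and the total degree already equals $\deg Q(\CA)$, that determinant is a scalar multiple of $Q(\CA)$ as soon as it is nonzero. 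The remaining implications run the same bookkeeping in the appropriate direction.

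The main obstacle is exactly the passage from numerics to modules. Hilbert-series agreement never by itself implies freeness, so the delicate points are the surjectivity of $\rho$ (equivalently, liftability of a basis of $D(\CA'')$) and the non-vanishing of the coefficient determinant feeding Saito's criterion, i.e.\ the $S$-linear independence of the constructed derivations. I expect to control both through the freeness hypotheses on the two given terms: since $\alpha$ is a nonzerodivisor and free modules have maximal depth, identifying $D(\CA'')$ with $D(\CA)/\alpha D(\CA')$ and running an Auslander--Buchsbaum depth or projective-dimension count transfers freeness across the sequence once it is short exact. A final point requiring care is that $\CA''$ lives over the smaller polynomial ring $\overline{S}$ in $\ell-1$ variables, so the three statements, though parallel, are not literally symmetric and the reductions must be organized accordingly.
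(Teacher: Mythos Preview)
The paper does not prove this theorem; it is quoted verbatim from Terao \cite{terao:freeI} (cf.\ \cite[Thm.~4.51]{orlikterao:arrangements}) as background. So there is no ``paper's own proof'' to compare against; your outline is to be measured against the standard argument in \cite[\S4.3]{orlikterao:arrangements}.

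Your toolkit is the right one: the left-exact sequence $0\to\alpha D(\CA')\to D(\CA)\xrightarrow{\rho}D(\CA'')$ and Saito's criterion are exactly what the standard proof uses. The gap is in how you propose to close the sequence on the right. The Poincar\'e-series step is circular: in the implication (ii)$+$(iii)$\Rightarrow$(i) you do not yet know $\Poin(D(\CA),t)$, so the identity $\Poin(D(\CA),t)=t\,\Poin(D(\CA'),t)+\Poin(\operatorname{im}\rho,t)$ gives you nothing to compare $\operatorname{im}\rho$ against; and in (i)$+$(ii)$\Rightarrow$(iii) you know $\Poin(\operatorname{im}\rho,t)$ but not $\Poin(D(\CA''),t)$, so the inclusion $\operatorname{im}\rho\subseteq D(\CA'')$ cannot be upgraded to equality by series alone. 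The depth/Auslander--Buchsbaum remedy you sketch presupposes the identification $D(\CA'')\cong D(\CA)/\alpha D(\CA')$, i.e.\ surjectivity of $\rho$, which is precisely the point at issue. Likewise, ``lift a homogeneous basis of $D(\CA'')$ through $\rho$'' already assumes that such lifts exist inside $D(\CA)$; lifting to $\Der(S)$ is free, but landing in $D(\CA)$ is not.

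The standard argument runs in the other direction and avoids surjectivity as a separate step. One starts from a homogeneous basis $\theta_1,\dots,\theta_\ell$ of the \emph{free} module among $D(\CA')$ and $D(\CA)$, performs Gaussian elimination on the column $(\theta_i(\alpha_0))_i$ modulo $\alpha_0$ so that $\theta_i(\alpha_0)\in\alpha_0 S$ for $i<\ell$, and then analyses the two cases $\theta_\ell(\alpha_0)\in\alpha_0 S$ or not. In the nontrivial case $\{\theta_1,\dots,\theta_{\ell-1},\alpha_0\theta_\ell\}$ is a candidate basis for $D(\CA)$ and the restrictions $\bar\theta_1,\dots,\bar\theta_{\ell-1}$ are candidate generators for $D(\CA'')$; Saito's criterion and a cofactor expansion of $\det(\theta_i(x_j))$ along the $\alpha_0$-row then decide both questions simultaneously, with the exponent hypotheses pinning down which degree gets shifted. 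If you reorganize your plan so that the basis of $D(\CA')$ is the input rather than the target of a lift, the remaining bookkeeping you describe goes through.
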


Terao's \emph{Factorization Theorem}
\cite{terao:freefactors} shows
that the Poincar\'e polynomial 
$\pi(\CA,t)$ of a free arrangement $\CA$
factors into linear terms 
given by the exponents of $\CA$
(cf.\ \cite[Thm.\ 4.137]{orlikterao:arrangements}):

\begin{theorem}
\label{thm:freefactors}
Suppose that 
$\CA$ is free with $\exp \CA = \{ b_1, \ldots , b_\ell\}$.
Then 
\[
\pi(\CA,t) = \prod_{i=1}^\ell (1 + b_i t).
\]
\end{theorem}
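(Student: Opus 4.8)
The statement to prove is Terao's Factorization Theorem, so the target is the identity $\pi(\CA,t)=\prod_{i=1}^\ell(1+b_it)$ for a free arrangement with $\exp\CA=\{b_1,\dots,b_\ell\}$. The first instinct is to induct on $|\CA|$ using the Addition--Deletion Theorem \ref{thm:add-del} together with the deletion--restriction recurrence $\pi(\CA,t)=\pi(\CA',t)+t\,\pi(\CA'',t)$, which holds for every triple $(\CA,\CA',\CA'')$. This naive route does \emph{not} close, however: a free arrangement need not be inductively free, so there is in general no hyperplane $H_0\in\CA$ for which $\CA'$ (equivalently $\CA''$) is again free, and the inductive hypothesis cannot be invoked. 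The plan is therefore to bypass induction entirely and pass to the module of logarithmic derivations.

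Concretely, I would work over $S=\BBK[x_1,\dots,x_\ell]$ with the characteristic polynomial $\chi(\CA,t)=\sum_{X\in L(\CA)}\mu(X)\,t^{\dim X}$, which is related to the Poincar\'e polynomial by $\pi(\CA,t)=(-t)^\ell\,\chi(\CA,-1/t)$, so that $\pi(\CA,t)=\prod_i(1+b_it)$ is equivalent to $\chi(\CA,t)=\prod_i(t-b_i)$. Let $D^{(p)}(\CA)$ denote the graded $S$-module of logarithmic $p$-derivations, with $D^{(0)}(\CA)=S$ and $D^{(1)}(\CA)=D(\CA)$. Freeness of $\CA$ means $D(\CA)$ is $S$-free on a homogeneous basis of polynomial degrees $b_1,\dots,b_\ell$; the key structural input is that the exterior powers $D^{(p)}(\CA)=\wedge^p D(\CA)$ are then also free, with basis degrees the $p$-fold sums of the $b_i$. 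Consequently the two-variable generating series of Hilbert series factors completely:
\[
\sum_{p=0}^{\ell}\mathrm{Hilb}\bigl(D^{(p)}(\CA),x\bigr)\,y^{\,p}=\frac{\prod_{i=1}^{\ell}\bigl(1+x^{b_i}y\bigr)}{(1-x)^\ell}.
\]

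The final step is to invoke the Solomon--Terao formula, which recovers $\chi(\CA,t)$ for \emph{any} central arrangement as a specific $x\to1$ specialization of the above series. Feeding in the factored form valid for free $\CA$ causes the product to collapse, yielding $\chi(\CA,t)=\prod_{i=1}^{\ell}(t-b_i)$ and hence $\pi(\CA,t)=\prod_{i=1}^{\ell}(1+b_it)$; the limit computation itself is routine and I would omit its details. I expect the genuine obstacle to be not this terminal calculation but the two results it rests on: the Solomon--Terao formula relating $\chi(\CA,t)$ to the Hilbert series of the $D^{(p)}(\CA)$, and the freeness (with explicit degrees) of the higher derivation modules $D^{(p)}(\CA)=\wedge^p D(\CA)$ for a free arrangement. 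Both are substantial inputs, and for the purposes of this recollection I would simply cite them from \cite{terao:freefactors} and \cite[\S4]{orlikterao:arrangements}.
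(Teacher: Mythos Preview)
The paper does not prove Theorem~\ref{thm:freefactors}; it is stated in the preliminaries as a known result of Terao, with citations to \cite{terao:freefactors} and \cite[Thm.~4.137]{orlikterao:arrangements}. Your sketch is correct and in fact follows the route of those references: the Solomon--Terao specialization formula applied to the factored Hilbert series of the free modules $D^{(p)}(\CA)\cong\wedge^p D(\CA)$ is exactly how the result is established there, and your remark that naive induction via Theorem~\ref{thm:add-del} fails (since free arrangements need not be inductively free) is the standard motivation for this more structural argument.
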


Theorem \ref{thm:add-del} motivates the notion of 
\emph{inductively free} arrangements,   
\cite[Def.\ 4.53]{orlikterao:arrangements}.

\begin{defn}
\label{def:indfree}
The class $\CIF$ of \emph{inductively free} arrangements 
is the smallest class of arrangements subject to
\begin{itemize}
\item[(i)] $\Phi_\ell \in \CIF$ for each $\ell \ge 0$;
\item[(ii)] if there exists a hyperplane $H_0 \in \CA$ such that both
$\CA'$ and $\CA''$ belong to $\CIF$, and $\exp \CA '' \subseteq \exp \CA'$, 
then $\CA$ also belongs to $\CIF$.
\end{itemize}
\end{defn}

In \cite[Prop.\ 2.10]{hogeroehrle:indfree},
we showed that the compatibility 
of products with free arrangements
from Proposition \ref{prop:product-free}
restricts to inductively free arrangements.

\begin{proposition}
\label{prop:product-indfree}
Let $\CA_1, \CA_2$ be two arrangements.
Then  $\CA = \CA_1 \times \CA_2$ is inductively free
if and only if both 
$\CA_1$ and $\CA_2$ are inductively free and in that case
$\exp \CA = \{\exp \CA_1, \exp \CA_2\}$.
\end{proposition}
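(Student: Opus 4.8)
The plan is to prove both implications at once by induction on the total number $|\CA_1| + |\CA_2|$ of hyperplanes, noting that the assertion on exponents is automatic: once $\CA = \CA_1 \times \CA_2$ is known to be inductively free it is in particular free, so $\exp \CA = \{\exp \CA_1, \exp \CA_2\}$ follows at once from Proposition \ref{prop:product-free}. The base case $|\CA_1| + |\CA_2| = 0$ is immediate, since then $\CA = \Phi_\ell$ for a suitable $\ell$ and the empty arrangement lies in $\CIF$ by Definition \ref{def:indfree}(i).

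The computational heart of the induction is the behaviour of triples under the product, so first I would record the following elementary fact. Suppose $H_0 \in \CA = \CA_1 \times \CA_2$ comes from the second factor, say $H_0 = V_1 \oplus H_2$ with $H_2 \in \CA_2$, and let $(\CA_2, \CA_2', \CA_2'')$ be the triple determined by $H_2$ in $\CA_2$. A direct inspection of the intersections shows that $\CA_{H_0} = \{H_0\}$, that $\CA' = \CA_1 \times \CA_2'$, and that the restriction $\CA'' = \CA^{H_0}$, viewed inside $H_0 = V_1 \oplus H_2$, equals $\CA_1 \times \CA_2''$ where $\CA_2'' = \CA_2^{H_2}$; the symmetric identities hold when $H_0$ comes from the first factor. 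In other words, every triple in a product is itself the product of a triple in one factor with the whole of the other, and every hyperplane of $\CA$ arises from exactly one of the two factors.

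For the implication ``$\Leftarrow$'', assume both factors are inductively free and $|\CA| \ge 1$, so at least one factor is nonempty, say $\CA_2$. By Definition \ref{def:indfree}(ii) there is $H_2 \in \CA_2$ with $\CA_2', \CA_2'' \in \CIF$ and $\exp \CA_2'' \subseteq \exp \CA_2'$. Taking $H_0 = V_1 \oplus H_2$ and using the product-triple identities, $\CA' = \CA_1 \times \CA_2'$ and $\CA'' = \CA_1 \times \CA_2''$ each have strictly fewer hyperplanes, so by the inductive hypothesis they lie in $\CIF$ with exponents $\{\exp \CA_1, \exp \CA_2'\}$ and $\{\exp \CA_1, \exp \CA_2''\}$. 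Adjoining the common multiset $\exp \CA_1$ to both sides of $\exp \CA_2'' \subseteq \exp \CA_2'$ yields $\exp \CA'' \subseteq \exp \CA'$, whence Definition \ref{def:indfree}(ii) shows $\CA \in \CIF$.

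For the implication ``$\Rightarrow$'', assume $\CA \in \CIF$ and $|\CA| \ge 1$, and pick a hyperplane $H_0$ witnessing Definition \ref{def:indfree}(ii). It comes from one factor, say $H_0 = V_1 \oplus H_2$ from $\CA_2$, so by the product-triple identities $\CA' = \CA_1 \times \CA_2'$ and $\CA'' = \CA_1 \times \CA_2''$ are inductively free and smaller; the inductive hypothesis then forces $\CA_1$, $\CA_2'$ and $\CA_2''$ all to be inductively free. It remains to upgrade this to $\CA_2$, for which I must verify $\exp \CA_2'' \subseteq \exp \CA_2'$. The hard part will be exactly this last bookkeeping step, since inductive freeness constrains the full \emph{multiset} of exponents and not merely freeness: from $\exp \CA'' \subseteq \exp \CA'$ together with Proposition \ref{prop:product-free} I get $\{\exp \CA_1, \exp \CA_2''\} \subseteq \{\exp \CA_1, \exp \CA_2'\}$ as multisets, and cancelling the common summand $\exp \CA_1$ forces $\exp \CA_2'' \subseteq \exp \CA_2'$. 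Hence $\CA_2 \in \CIF$ by Definition \ref{def:indfree}(ii), and $\CA_1 \in \CIF$ was already established, completing the induction.
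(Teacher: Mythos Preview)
The paper does not actually prove Proposition~\ref{prop:product-indfree}; it merely records the statement and cites \cite[Prop.\ 2.10]{hogeroehrle:indfree} for the proof. Your argument is correct and is the natural one: the product--triple identities $\CA' = \CA_1 \times \CA_2'$ and $\CA'' \cong \CA_1 \times \CA_2''$ (for $H_0$ coming from the second factor) reduce both implications to a straightforward induction on $|\CA_1|+|\CA_2|$, and the multiset cancellation $\{\exp\CA_1,\exp\CA_2''\}\subseteq\{\exp\CA_1,\exp\CA_2'\}\Rightarrow \exp\CA_2''\subseteq\exp\CA_2'$ is valid since multiplicities add under disjoint union. This is precisely the strategy used in the cited reference, so your approach agrees with the intended proof.
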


\subsection{Supersolvable Arrangements}
\label{ssect:super}

Let $\CA$ be an arrangement.
Following \cite[\S 2]{orlikterao:arrangements}, we say
that $X \in L(\CA)$ is \emph{modular}
provided $X + Y \in L(\CA)$ for every $Y \in L(\CA)$, 
cf.\ \cite[Def.\ 2.32, Cor.\ 2.26]{orlikterao:arrangements}.

The following notion is due to Stanley \cite{stanley:super}. 

\begin{defn}
\label{def:super}
Let $\CA$ be a central (and essential) $\ell$-arrangement.
We say that $\CA$ is \emph{supersolvable} 
provided there is a maximal chain
\[
V = X_0 < X_1 < \ldots < X_{\ell-1} < X_\ell = \{0\}
\]
of modular elements $X_i$ in $L(\CA)$.
\end{defn}

\begin{remark}
\label{rem:2-arr}
By \cite[Ex.\ 2.28]{orlikterao:arrangements}, 
$V$, $\{0\}$ and the members in $\CA$ 
are always modular in $L(\CA)$.
It follows  that all $0$- $1$-, and $2$-arrangements are supersolvable.
\end{remark}

Note that supersolvable arrangements
are always inductively free, 
\cite[Thm.\ 4.58]{orlikterao:arrangements}.

In \cite[Prop.\ 2.6]{hogeroehrle:super},
we showed that the compatibility 
of products with inductively free arrangements
from Proposition \ref{prop:product-indfree}
restricts further to supersolvable arrangements.

\subsection{Nice Arrangements}
\label{ssect:factored}

The notion of a \emph{nice} or \emph{factored} 
arrangement goes back to Terao \cite{terao:factored}.
It generalizes the concept of a supersolvable arrangement, see
Proposition \ref{prop:ssfactored}.
We recall the relevant notions and results from \cite{terao:factored} 
(cf.\  \cite[\S 2.3]{orlikterao:arrangements}).

\begin{defn}
\label{def:independent}
Let $\pi = (\pi_1, \ldots , \pi_s)$ be a partition of $\CA$.
Then $\pi$ is called \emph{independent}, provided 
for any choice $H_i \in \pi_i$ for $1 \le i \le s$,
the resulting $s$ hyperplanes are linearly independent, i.e.\
$r(H_1 \cap \ldots \cap H_s) = s$.
\end{defn}

\begin{defn}
\label{def:indpart}
Let $\pi = (\pi_1, \ldots , \pi_s)$ be a partition of $\CA$
and let $X \in L(\CA)$.
The \emph{induced partition} $\pi_X$ of $\CA_X$ is given by the non-empty 
blocks of the form $\pi_i \cap \CA_X$.
\end{defn}

\begin{defn}
\label{def:factored}
The partition 
$\pi$ of $\CA$ is
\emph{nice} for $\CA$ or a \emph{factorization} of $\CA$  provided 
\begin{itemize}
\item[(i)] $\pi$ is independent, and 
\item[(ii)] for each $X \in L(\CA) \setminus \{V\}$, the induced partition $\pi_X$ admits a block 
which is a singleton. 
\end{itemize}
If $\CA$ admits a factorization, then we also say that $\CA$ is \emph{factored} or \emph{nice}.
\end{defn}

\begin{remark}
\label{rem:factored}
(i). 
Vacuously, the empty partition is nice for the empty arrangement $\Phi_\ell$.

(ii).
If $\CA \ne \Phi_\ell$,  
$\pi$ is a nice partition of $\CA$
and $X \in L(\CA)\setminus\{V\}$, then the non-empty parts of the 
induced partition $\pi_X$ form a nice partition of $\CA_X$.
For, if $\pi$ is independent, then clearly so is $\pi_X$.
Moreover, 
if $Y \in L(\CA_X)\setminus\{V\}$, then $Y \in L(\CA)$ and since 
$\pi$ is a factorization of $\CA$, there is a block $\pi_i$ of $\pi$
such that $\pi_i \cap \CA_Y =\{H\}$ is a singleton.
Since $X \subseteq Y \subseteq H$, $\pi_i \cap \CA_X$ is a 
non-empty block of $\pi_X$ so that 
$(\pi_i \cap \CA_X) \cap (\CA_X)_Y  = \pi_i \cap \CA_Y$ is a singleton
(note that for $X \subseteq Y$ in $L(\CA)$, we have $(\CA_X)_Y = \CA_Y$).

(iii). 
Since the singleton condition in Definition \ref{def:factored}(ii)
also applies to the center $T_\CA$ of $L(\CA)$, 
a factorization $\pi$ of $\CA \ne \Phi_\ell$
always admits a singleton as one of its parts.
Also note that for a hyperplane, 
the singleton condition trivially holds.
\end{remark}

We recall the main results from \cite{terao:factored} 
(cf.\  \cite[\S 3.3]{orlikterao:arrangements}) that motivated 
Definition \ref{def:factored}.

\begin{theorem}
\label{thm:teraofactored}
Let $\CA$ be a central $\ell$-arrangement and let 
$\pi = (\pi_1, \ldots, \pi_s)$ be a partition of $\CA$.
Then the $\BBK$-linear map $\kappa$ 
defined in \eqref{eq:factoredOS}
is an isomorphism of graded $\BBK$-vector spaces
if and only if $\pi$ is nice for $\CA$.
\end{theorem}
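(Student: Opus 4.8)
The plan is to prove the two implications simultaneously by reducing the assertion ``$\kappa$ is a graded isomorphism'' to a combinatorial identity of Poincar\'e polynomials, and then to match that identity with the two defining conditions of niceness. First I would record the standard structural fact about the Orlik--Solomon algebra that a monomial $a_{H_1}\cdots a_{H_p}$ is non-zero in $A(\CA)$ if and only if the set $\{H_1,\ldots,H_p\}$ is linearly independent, i.e.\ $r(H_1\cap\cdots\cap H_p)=p$. Since $\kappa$ is degree preserving and both sides are finite-dimensional graded $\BBK$-vector spaces, $\kappa$ is an isomorphism if and only if it is bijective in every degree; in degrees $0$ and $1$ this is automatic because $\pi$ is a partition of $\CA$. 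The image of $\kappa$ is spanned by the \emph{transversal monomials} $a_S$ with $|S\cap\pi_i|\le 1$ for all $i$, and the generating function counting such transversals $S$ by $t^{|S|}$ is exactly $\prod_{i=1}^s(1+|\pi_i|t)$, the Poincar\'e polynomial of the domain.

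Next I would isolate the role of condition (i). If $\pi$ is independent, then every transversal $S$ is a sub-transversal of a full independent transversal and hence is itself linearly independent, so each $a_S$ is non-zero; I would then prove the injectivity of $\kappa$ (Lemma~I) by the decomposition $A(\CA)=\bigoplus_{X\in L(\CA)}A(\CA)_X$ of the Orlik--Solomon algebra according to the flat $X=\bigcap S$, together with an induction on $r(\CA)$ applied to the localizations $\CA_X$. Conversely, if $\pi$ is not independent, some full transversal $\{H_1,\ldots,H_s\}$ is dependent, so $\kappa$ sends the corresponding basis tensor to $\pm a_{H_1}\cdots a_{H_s}=0$ and fails to be injective. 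Thus $\kappa$ is injective $\iff$ $\pi$ is independent, and under independence $\kappa$ is an isomorphism if and only if the transversal monomials span $A(\CA)$, equivalently (by the dimension count above and $\Poin(A(\CA),t)=\pi(\CA,t)$) if and only if
\[
\prod_{i=1}^s(1+|\pi_i|t)=\pi(\CA,t).
\]

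The heart of the argument (Lemma~II) is therefore the purely combinatorial equivalence: for an independent $\pi$, the displayed identity holds if and only if the singleton condition (ii) is satisfied. For sufficiency I would use (ii) to construct a linear order on $\CA$, refining an ordering of the blocks, with respect to which the transversal independent sets are exactly the no-broken-circuit sets; as the latter form a $\BBK$-basis of $A(\CA)$ graded by cardinality, this yields at once the surjectivity of $\kappa$ and the polynomial identity. For necessity I would argue by induction through the localizations $\CA_X$: the induced partition $\pi_X$ is again independent with domain Poincar\'e polynomial $\prod_i(1+|\pi_i\cap\CA_X|t)$, and a failure of (ii) at some proper flat $X$ would leave an independent set in $\CA_X$ whose monomial is not in the span of the transversal monomials, contradicting the equality above. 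Combining Lemmas~I and~II yields $\kappa$ iso $\iff$ ($\pi$ independent and (ii)) $\iff$ $\pi$ nice.

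I expect the main obstacle to be the sufficiency half of Lemma~II, namely producing from the abstract singleton condition an explicit ordering of $\CA$ for which ``transversal'' coincides with ``no broken circuit''. Controlling the broken circuits requires that any two hyperplanes lying in a common block be forced into the same circuit data by the singleton condition at the relevant flats; this is precisely where condition (ii) does the work, and it is most cleanly executed by an induction on $|\CA|$ using a triple $(\CA,\CA',\CA'')$ in which $H_0$ is chosen from a singleton block (which exists by Remark~\ref{rem:factored}(iii)), matching the tensor decomposition of the domain against the Orlik--Solomon recursion $\pi(\CA,t)=\pi(\CA',t)+t\,\pi(\CA'',t)$. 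Keeping the induced partitions on $\CA'$ and $\CA''$ nice throughout the induction, without invoking the (later, and logically dependent) addition-deletion theorem for nice arrangements, is the delicate point.
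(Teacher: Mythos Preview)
The paper does not give its own proof of this statement: Theorem~\ref{thm:teraofactored} appears in the recollections of \S\ref{ssect:factored} as Terao's result \cite[Thm.~2.8]{terao:factored} (cf.\ \cite[Thm.~3.87]{orlikterao:arrangements}), cited without proof. So there is nothing in the paper to compare your argument against.

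That said, your sketch has a genuine gap. Your Lemma~I claims that $\kappa$ is injective if and only if $\pi$ is independent, and the implication ``independent $\Rightarrow$ injective'' is false. Take $\CA$ to be four distinct lines through the origin in $\BBK^2$ and set $\pi=(\{H_1,H_2\},\{H_3,H_4\})$. Every transversal pair has rank $2$, so $\pi$ is independent; yet $\Poin([\pi_1]\otimes[\pi_2],t)=(1+2t)^2=1+4t+4t^2$ while $\pi(\CA,t)=(1+t)(1+3t)=1+4t+3t^2$, so $\kappa$ cannot be injective in degree~$2$. Hence the clean split you propose---condition~(i) governs injectivity, condition~(ii) governs surjectivity---does not hold: both conditions are needed together even for injectivity. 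In Terao's actual argument (as in \cite[\S3.3]{orlikterao:arrangements}) one first uses \emph{both} (i) and (ii) to prove that for every $X\in L(\CA)$ exactly $r(X)$ blocks of $\pi$ meet $\CA_X$ (this is Corollary~\ref{cor:teraofactored}(iii)); a M\"obius-function computation then gives the Poincar\'e identity $\prod_i(1+|\pi_i|t)=\pi(\CA,t)$ directly, and surjectivity of $\kappa$ is shown separately (again via the singleton condition), after which equality of dimensions forces the isomorphism. Your nbc-ordering idea for surjectivity is reasonable, but it cannot be decoupled from the dimension count the way your outline suggests.
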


\begin{corollary}
\label{cor:teraofactored}
Let  $\pi = (\pi_1, \ldots, \pi_s)$ be a factorization of $\CA$.
Then the following hold:
\begin{itemize}
\item[(i)] $s = r = r(\CA)$ and 
\[
\Poin(A(\CA),t) = \prod_{i=1}^r (1 + |\pi_i|t);
\]
\item[(ii)]
the multiset $\{|\pi_1|, \ldots, |\pi_r|\}$ only depends on $\CA$;
\item[(iii)]
for any $X \in L(\CA)$, we have
\[
r(X) = |\{ i \mid \pi_i \cap \CA_X \ne \varnothing \}|.
\]  
\end{itemize}
\end{corollary}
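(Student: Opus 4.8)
The plan is to deduce all three parts from Theorem~\ref{thm:teraofactored} together with the elementary fact that the Poincar\'e polynomial of a tensor product of graded $\BBK$-vector spaces is the product of the individual Poincar\'e polynomials.

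For part (i), I would start from the graded isomorphism $\kappa$ of \eqref{eq:factoredOS}, which is available precisely because $\pi$ is nice (Theorem~\ref{thm:teraofactored}). Each block $\pi_i$ is non-empty and each $[\pi_i]$ is concentrated in degrees $0$ and $1$ with $\Poin([\pi_i],t)=1+|\pi_i|t$, so the top non-zero degree of $[\pi_1]\otimes\cdots\otimes[\pi_s]$ is exactly $s$. Comparing with the top degree $r=r(\CA)$ of $A(\CA)$ forces $s=r$. Multiplicativity of the Poincar\'e polynomial under $\otimes$ then yields
\[
\Poin(A(\CA),t)=\prod_{i=1}^{s}\Poin([\pi_i],t)=\prod_{i=1}^{r}(1+|\pi_i|t).
\]

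For part (ii), I would use that $\Poin(A(\CA),t)=\pi(\CA,t)$ is a combinatorial invariant of $\CA$ by the Orlik--Solomon theorem recalled after \eqref{eq:factoredOS}, and in particular does not depend on the chosen factorization $\pi$. By part (i) this polynomial splits over $\BBK[t]$ as $\prod_{i=1}^r(1+|\pi_i|t)$; since $\BBK[t]$ is a unique factorization domain, the unordered collection of its linear factors, equivalently the multiset $\{|\pi_1|,\ldots,|\pi_r|\}$, is uniquely determined by the polynomial and hence by $\CA$ alone.

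For part (iii), fix $X\in L(\CA)$. If $X=V$ then $\CA_X=\varnothing$ and both sides vanish, so assume $X\ne V$. By Remark~\ref{rem:factored}(ii) the non-empty blocks of the induced partition $\pi_X$ form a factorization of the subarrangement $\CA_X$, and their number is exactly $|\{i\mid \pi_i\cap\CA_X\ne\varnothing\}|$. Applying part (i) to $\CA_X$ identifies this number with $r(\CA_X)$. It then remains to see that $r(\CA_X)=r(X)$: because $X\in L(\CA)$, the space $X$ is the intersection of the hyperplanes of $\CA$ containing it, which is precisely the intersection of the members of $\CA_X$; hence the center of $L(\CA_X)$ equals $X$, and therefore $r(\CA_X)=\codim_V(X)=r(X)$. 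The only step that is not purely formal is this last identification of $X$ as the center of $\CA_X$, and I expect it to be the main (though minor) obstacle; once it is secured, part (iii) is just part (i) transported to $\CA_X$ through the hereditary structure of nice partitions supplied by Remark~\ref{rem:factored}(ii).
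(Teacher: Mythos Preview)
Your argument is correct in all three parts. Note, however, that the paper does not actually supply a proof of this corollary: it is stated in Section~\ref{ssect:factored} as a recollection of Terao's results from \cite{terao:factored} (cf.\ \cite[\S 3.3]{orlikterao:arrangements}), so there is nothing to compare against. Your derivation---reading off (i) from the graded isomorphism of Theorem~\ref{thm:teraofactored}, extracting (ii) from unique factorization of $\pi(\CA,t)$, and obtaining (iii) by applying (i) to $\CA_X$ via Remark~\ref{rem:factored}(ii)---is the standard route and is exactly how the result is deduced in the sources the paper cites.
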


\begin{remark}
\label{rem:factorediscombinatorial}
It follows from \eqref{eq:piA}
and Corollary \ref{cor:teraofactored} that 
the question whether $\CA$ is factored is a purely combinatorial 
property and only depends on the lattice $L(\CA)$. 
\end{remark}

\begin{remark}
\label{rem:exponents}
Suppose that $\CA$ is free of rank $r$.
Then $\CA = \Phi_{\ell-r} \times \CA_0$, 
where $\CA_0$ is an essential, free $r$-arrangement
(cf.\ \cite[\S 3.2]{orlikterao:arrangements}), and so,  
thanks to Proposition \ref{prop:product-free}, 
$\exp \CA = \{0^{\ell-r}, \exp \CA_0\}$.
Suppose that $\pi = (\pi_1, \ldots, \pi_r)$ is a nice partition 
of $\CA$. 
Then by the factorization properties of the Poincar\'e polynomials
for free and factored arrangements, 
Theorem \ref{thm:freefactors}, respectively 
Corollary \ref{cor:teraofactored}(i) and 
\eqref{eq:piA}
we have 
\begin{equation*}
\label{eq:exp}
\exp \CA = \{0^{\ell-r}, |\pi_1|, \ldots, |\pi_r|\}.
\end{equation*}
In particular, if $\CA$ is essential, then 
\begin{equation*}
\label{eq:ess-exp}
\exp \CA = \{|\pi_1|, \ldots, |\pi_\ell|\}.
\end{equation*}
\end{remark}

Finally, we record 
\cite[Ex.\ 2.4]{terao:factored},
which shows that nice arrangements generalize 
supersolvable ones 
(cf.\  \cite[Thm.\ 5.3]{orliksolomonterao:hyperplanes}, 
\cite[Prop.\ 3.2.2]{jambu:factored}, 
\cite[Prop.\ 2.67, Thm.\ 3.81]{orlikterao:arrangements}).

\begin{proposition}
\label{prop:ssfactored}
Let $\CA$ be a central,  supersolvable arrangement of rank $r$.
Let 
\[
V = X_0 < X_1 < \ldots < X_{r-1} < X_r = T_\CA
\]
be a  maximal chain of modular elements in $L(\CA)$.
Define
$\pi_i = \CA_{X_i} \setminus \CA_{X_{i-1}}$
for $1 \le i \le r$.
Then $\pi = (\pi_1, \ldots, \pi_r)$ is a nice partition of $\CA$.
In particular, the $\BBK$-linear map 
$\kappa$ defined in \eqref{eq:factoredOS} is an 
isomorphism of graded $\BBK$-vector spaces.
\end{proposition}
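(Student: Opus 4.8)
The plan is to verify directly that $\pi = (\pi_1, \ldots, \pi_r)$ satisfies the two conditions of Definition \ref{def:factored}, and then to invoke Theorem \ref{thm:teraofactored} for the assertion about $\kappa$. Recall the chain is strictly decreasing as subspaces, $V = X_0 \supsetneq X_1 \supsetneq \cdots \supsetneq X_r = T_\CA$, with $r(X_i) = i$. Since a hyperplane never contains $V$ we have $\CA_{X_0} = \varnothing$, while $T_\CA \subseteq H$ for every $H \in \CA$ gives $\CA_{X_r} = \CA$; as $X_i \subseteq X_{i-1}$ forces $\CA_{X_{i-1}} \subseteq \CA_{X_i}$, the sets $\CA_{X_0} \subseteq \CA_{X_1} \subseteq \cdots \subseteq \CA_{X_r} = \CA$ form an ascending filtration whose successive differences are exactly the $\pi_i$, so $\pi$ is a partition of $\CA$. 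Each block is non-empty: using $X = \bigcap_{H \in \CA_X} H$ for $X \in L(\CA)$, the equality $\CA_{X_{i-1}} = \CA_{X_i}$ would force $X_{i-1} = X_i$, a contradiction; hence $\CA_{X_{i-1}} \subsetneq \CA_{X_i}$ and $\pi_i \ne \varnothing$.

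For independence (Definition \ref{def:independent}) I would choose $H_i \in \pi_i$ arbitrarily, set $Z_i := H_1 \cap \cdots \cap H_i$, and show $r(Z_i) = i$ by induction on $i$. For $j \le i-1$ one has $X_{i-1} \subseteq X_j \subseteq H_j$, so $X_{i-1} \subseteq Z_{i-1}$; on the other hand $H_i \notin \CA_{X_{i-1}}$ means $X_{i-1} \not\subseteq H_i$. Consequently $Z_{i-1} \not\subseteq H_i$ (otherwise $X_{i-1} \subseteq Z_{i-1} \subseteq H_i$), so intersecting with $H_i$ drops the codimension by exactly one, giving $r(Z_i) = r(Z_{i-1}) + 1$ and hence $r(Z_r) = r$. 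Note this step uses only the chain and not modularity.

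The essential point, where modularity enters, is the singleton condition Definition \ref{def:factored}(ii). Fix $X \in L(\CA) \setminus \{V\}$ and put $Y_i := X + X_i$; since each $X_i$ is modular, $Y_i \in L(\CA)$. My key computation is that, for $H \in \CA$ with $X \subseteq H$, the conditions $X_i \subseteq H$ and $X_{i-1} \not\subseteq H$ are equivalent to $Y_i \subseteq H$ and $Y_{i-1} \not\subseteq H$, respectively; this yields $\pi_i \cap \CA_X = \CA_{Y_i} \setminus \CA_{Y_{i-1}}$, so the induced partition $\pi_X$ has the same shape relative to the chain $V = Y_0 \supseteq Y_1 \supseteq \cdots \supseteq Y_r = X$. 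I would then let $m$ be the least index with $Y_m \ne V$ (which exists since $Y_r = X \ne V$). By minimality $Y_{m-1} = V$, and writing $X_{m-1} = X_m + \langle v \rangle$ gives $V = Y_{m-1} = Y_m + \langle v \rangle$, so $Y_m$ has codimension one, i.e.\ $r(Y_m) = 1$ and $Y_m \in \CA$. Therefore $\pi_m \cap \CA_X = \CA_{Y_m} \setminus \CA_{Y_{m-1}} = \CA_{Y_m} = \{Y_m\}$ is a singleton block of $\pi_X$, as required.

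Granting the two conditions, $\pi$ is nice for $\CA$ by Definition \ref{def:factored}, and the statement that $\kappa$ is an isomorphism of graded $\BBK$-vector spaces then follows from Theorem \ref{thm:teraofactored}. I expect the singleton step to be the main obstacle: the substitution $Y_i = X + X_i$ is precisely what converts the induced partition back into one of the original form, and it is legitimate only because modularity guarantees $Y_i \in L(\CA)$; moreover the correct block to single out is the one arising from the \emph{first} strict drop of the chain $(Y_i)$, not the last.
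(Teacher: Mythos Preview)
The paper does not prove Proposition~\ref{prop:ssfactored}; it merely records it as a known result, attributing it to \cite[Ex.~2.4]{terao:factored} and cross-referencing \cite[Thm.~5.3]{orliksolomonterao:hyperplanes}, \cite[Prop.~3.2.2]{jambu:factored}, and \cite[Prop.~2.67, Thm.~3.81]{orlikterao:arrangements}. So there is no in-paper argument to compare against.

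Your proof is correct and is essentially the standard one found in those references (in particular \cite[Prop.~2.67]{orlikterao:arrangements}). The independence step is clean and, as you note, requires only the chain structure. For the singleton condition, the passage from the original chain $(X_i)$ to the translated chain $Y_i = X + X_i$ is exactly the classical manoeuvre; the identity $\pi_i \cap \CA_X = \CA_{Y_i}\setminus\CA_{Y_{i-1}}$ holds for \emph{any} subspaces $Y_i$, but the point where modularity is genuinely used is the last one: you need $Y_m \in L(\CA)$ so that a codimension-one $Y_m$ is actually a member of $\CA$, whence $\CA_{Y_m} = \{Y_m\}$. Your closing paragraph slightly overstates the role of modularity (the substitution itself is harmless; only the membership $Y_m \in L(\CA)$ needs it), but the argument in the body is accurate.
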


\subsection{Inductively factored Arrangements}
\label{ssect:indfactored}

Following Jambu and Paris 
\cite{jambuparis:factored}, 
we introduce further notation.

\begin{defn} 
\label{def:distinguished}
Suppose $\CA \ne \Phi_\ell$. 
Let $\pi = (\pi_1, \ldots, \pi_s)$ be a partition of $\CA$.
Let $H_0 \in \pi_1$ and 
let $(\CA, \CA', \CA'')$ be the triple associated with $H_0$. 
We say that $H_0$ is \emph{distinguished (with respect to $\pi$)}
provided $\pi$ induces a factorization $\pi'$ of 
$\CA'$, i.e.\ the non-empty 
subsets $\pi_i \cap \CA'$ form a nice partition
of $\CA'$. Note that since $H_0 \in \pi_1$, we have
$\pi_i \cap \CA' = \pi_i\not= \varnothing$ 
for $i = 2, \ldots, s$. 

Also, associated with $\pi$ and $H_0$, we define 
the \emph{restriction map}
\[
\R := \R_{\pi,H_0} : \CA \setminus \pi_1 \to \CA''\ \text{ given by } \ H \mapsto H \cap H_0
\]
and set 
\[
\pi_i'' := \R(\pi_i) = \{H \cap H_0 \mid H \in \pi_i\} \
\text{ for }\  2 \le i \le s.
\]

In general $\R$ need not be surjective nor injective.
However, since we are only concerned with cases when 
$\pi'' = (\pi_2'', \ldots, \pi_s')$ is a
partition of $\CA''$,  
$\R$ has to be onto and 
$\R(\pi_i) \cap \R(\pi_j) = \varnothing$ for $i \ne j$.
As we have observed in \cite{hogeroehrle:factored},
the natural condition in this context is the injectivity of $\R$.
\end{defn}

In \cite[Thm.\ 1.5, Thm.\ 1.7]{hogeroehrle:factored}, we gave the following 
analogues of Terao's 
Addition-Deletion Theorem \ref{thm:add-del} for 
free arrangements for the class of 
nice arrangements.

\begin{theorem}
\label{thm:add-del-factored}
Suppose that $\CA \ne \Phi_\ell$.
Let $\pi = (\pi_1, \ldots, \pi_\ell)$ be a  partition  of $\CA$.
Let $H_0 \in \pi_1$ and let
$(\CA, \CA', \CA'')$ be the triple associated with $H_0$. 
Suppose that $\R: \CA \setminus \pi_1 \to \CA''$ 
is injective.
Then any two of the following statements imply the third:
\begin{itemize}
\item[(i)] $\pi$ is nice for $\CA$;
\item[(ii)] $\pi'$ is nice for $\CA'$;
\item[(iii)] $\pi''$ is nice for $\CA''$.
\end{itemize}
\end{theorem}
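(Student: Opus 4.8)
The plan is to verify all three implications directly from the combinatorial Definition \ref{def:factored}, using the injectivity of $\R$ to set up a cardinality-preserving dictionary between the induced partitions of $\CA$, $\CA'$ and $\CA''$. Throughout I would write $b_i = |\pi_i|$, record that $\pi' = (\pi_1\setminus\{H_0\},\pi_2,\dots,\pi_\ell)$ partitions $\CA'$ and $\pi''=(\R(\pi_2),\dots,\R(\pi_\ell))$, and note that injectivity of $\R$ gives $|\R(\pi_i)|=b_i$ and makes the blocks $\R(\pi_i)$ pairwise disjoint. Since niceness is a purely combinatorial property (Remark \ref{rem:factorediscombinatorial}), it suffices to transfer the two defining conditions: independence (Definition \ref{def:independent}) and the singleton condition (Definition \ref{def:factored}(ii)).

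Independence is the easy half. Every transversal of $\pi'$ is a transversal of $\pi$, and every transversal $(K_i)_{i\ge 2}$ of $\pi''$ lifts, via $\R^{-1}$, to a transversal $(H_0,H_2,\dots,H_\ell)$ of $\pi$; because intersecting a codimension-$\ell$ flat contained in $H_0$ with $H_0$ lowers the codimension by exactly one inside $H_0$, independence of $\pi$ forces independence of both $\pi'$ and $\pi''$. Conversely, splitting a transversal of $\pi$ according to whether it selects $H_0$, independence of $\pi'$ (for the transversals avoiding $H_0$) together with that of $\pi''$ (for those using $H_0$) recovers independence of $\pi$. In implication (i)\&(ii)$\Rightarrow$(iii) one must also know that $\pi''$ is an honest partition of $\CA''$, i.e.\ that $\R$ is onto; this is forced numerically: by Corollary \ref{cor:teraofactored}(i) we have $\pi(\CA,t)=\prod_{i=1}^{\ell}(1+b_it)$ and $\pi(\CA',t)=(1+(b_1-1)t)\prod_{i=2}^{\ell}(1+b_it)$, so the deletion–restriction recursion $\pi(\CA,t)=\pi(\CA',t)+t\,\pi(\CA'',t)$ of \cite{orlikterao:arrangements} gives $\pi(\CA'',t)=\prod_{i=2}^{\ell}(1+b_it)$, whence $|\CA''|=\sum_{i\ge 2}b_i=|\CA\setminus\pi_1|$ and the injection $\R$ is a bijection.

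The heart of the matter is the singleton condition, and the key is the lattice bookkeeping for the restriction. Under the standard identification $L(\CA'')=\{Z\in L(\CA)\mid Z\subseteq H_0\}$, with $H_0$ playing the role of the ambient space, injectivity of $\R$ yields $\R(\pi_i\cap\CA_Z)=\pi_i''\cap(\CA'')_Z$ with equal cardinalities for all $i\ge 2$ and all $Z\subsetneq H_0$; thus $\pi''_Z$ is, block for block, the part of $\pi_Z$ supported on the indices $i\ge 2$. Hence the singleton conditions for $\pi$ and for $\pi''$ differ only in whether the distinguished block $\pi_1$ is permitted to supply the singleton. I would then run each implication by splitting on the position of the relevant flat: for a flat $X$ with $H_0\notin\CA_X$ one has $\CA_X=\CA'_X$ and $\pi_X=\pi'_X$, so the conditions for $\pi$ and $\pi'$ coincide verbatim; for a flat with $H_0\in\CA_X$ one passes through the restriction via the dictionary above. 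In the implications producing $\pi'$ or $\pi$ this closes immediately, because for flats inside $H_0$ one reads off the needed singleton directly from $\pi''$, all of whose blocks already avoid $\pi_1$.

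The genuinely delicate case — and the step I expect to be the main obstacle — is implication (i)\&(ii)$\Rightarrow$(iii), where the target is $\pi''$ itself and one can no longer invoke a partition free of $\pi_1$. The obstruction is a flat $Z\subsetneq H_0$ whose induced partition $\pi_Z$ has its only singleton in the distinguished block, $\pi_1\cap\CA_Z=\{H_0\}$, so that the dictionary produces no singleton in $\pi''_Z$. I would resolve this by rerouting around $\pi_1$: pass to $W:=\bigcap_{H\in\CA_Z\setminus\{H_0\}}H$, which satisfies $W\supseteq Z$ and $\CA_W=\CA_Z\setminus\{H_0\}$, so the distinguished block drops out and $\pi_W=(\pi_i\cap\CA_Z)_{i\ge 2}$. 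Applying niceness of $\pi$ at $W$ when $W\supsetneq Z$ (so $W\not\subseteq H_0$ and $W\in L(\CA)\setminus\{V\}$), or of $\pi'$ at $Z$ when $W=Z\in L(\CA')$, produces a singleton among the blocks $\pi_j\cap\CA_Z$ with $j\ge 2$, which $\R$ transports to the required singleton of $\pi''_Z$. Combining the independence transfer with this case analysis in all three directions completes the argument.
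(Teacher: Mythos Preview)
The present paper does not prove Theorem \ref{thm:add-del-factored}; it is quoted as a recollection from the authors' companion work \cite{hogeroehrle:factored}, so there is no in-paper argument to compare against. Your direct verification of Definition \ref{def:factored} in all three directions is correct and is the natural elementary route: the independence transfer is fine, the block-wise bijection $\R:\pi_j\cap\CA_Z\to\pi_j''\cap(\CA'')_Z$ follows from injectivity alone (since $\pi_j''=\R(\pi_j)$ by definition) and is therefore available in every implication, and the Poincar\'e-polynomial count via deletion--restriction forcing surjectivity of $\R$ in (i)\&(ii)$\Rightarrow$(iii) is standard. The one genuinely nontrivial step---rerouting through $W=\bigcap_{H\in\CA_Z\setminus\{H_0\}}H$ when the only singleton of $\pi_Z$ sits in $\pi_1$---is handled correctly: since $Z\subseteq H_0$ forces $H_0\in\pi_1\cap\CA_Z$, that singleton must be $\{H_0\}$, and your dichotomy $W\supsetneq Z$ (apply (i) at $W$, where the $\pi_1$-block has vanished) versus $W=Z\in L(\CA')$ (apply (ii) at $Z$) produces the required singleton in a block $\pi_j$ with $j\ge 2$.
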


As indicated above, nice arrangements need not be free 
and vice versa, as observed by 
Terao \cite{terao:factored}.
Combining Theorem \ref{thm:add-del-factored} with Terao's
Addition-Deletion Theorem \ref{thm:add-del} for free arrangements, 
in \cite[Thm.\ 1.7]{hogeroehrle:factored}
we obtain the following 
Addition-Deletion Theorem for 
the proper subclass of 
free and nice arrangements.

\begin{theorem}
\label{thm:add-del-free-factored}
Suppose that $\CA \ne \Phi_\ell$.
Let $\pi = (\pi_1, \ldots, \pi_\ell)$ be a  partition  of $\CA$.
Let $H_0 \in \pi_\ell$ and 
let $(\CA, \CA', \CA'')$ be the triple associated with $H_0$. 
Suppose that $\R: \CA \setminus \pi_1 \to \CA''$ 
is injective. 
Then any two of the following statements imply the third:
\begin{itemize}
\item[(i)] $\pi$ is nice for $\CA$ and $\CA$ is free; 
\item[(ii)] $\pi'$ is nice for $\CA'$ and $\CA'$ is free; 
\item[(iii)] $\pi''$ is nice for $\CA''$ and $\CA''$ is free. 
\end{itemize}
\end{theorem}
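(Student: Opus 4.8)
The plan is to prove Theorem \ref{thm:add-del-free-factored} by running Terao's Addition-Deletion Theorem \ref{thm:add-del} for freeness and our Addition-Deletion Theorem \ref{thm:add-del-factored} for niceness in parallel, exploiting the fact that each of (i), (ii), (iii) is the conjunction of a freeness assertion and a niceness assertion. Write $b_i := |\pi_i|$ for $1 \le i \le \ell$. By Remark \ref{rem:exponents}, once a member of the triple is known to be both free and nice, its exponents are recorded, up to zeros accounting for a possible drop in rank, by the sizes of the blocks of its associated partition.

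First I would dispatch the niceness halves. Relabelling so that the block $\pi_\ell$ containing $H_0$ plays the role of the distinguished first block, the injectivity hypothesis on $\R$ becomes precisely the hypothesis of Theorem \ref{thm:add-del-factored}. Hence any two of the niceness assertions in (i), (ii), (iii) imply the third, directly by Theorem \ref{thm:add-del-factored}.

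Next I would dispatch the freeness halves via Theorem \ref{thm:add-del}, for which the crux is to verify that the exponents of the free and nice members of the triple satisfy Terao's prescribed relations with common data $b_1, \ldots, b_{\ell-1}$ and $b_\ell$. Since $H_0 \in \pi_\ell$, deletion removes a single hyperplane from the block $\pi_\ell$ alone, giving $\exp \CA' = \{b_1, \ldots, b_{\ell-1}, b_\ell - 1\}$. Since $\R$ is injective, restriction drops the block $\pi_\ell$ while preserving the sizes of all remaining blocks, giving $\exp \CA'' = \{b_1, \ldots, b_{\ell-1}\}$, whereas $\exp \CA = \{b_1, \ldots, b_\ell\}$. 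These are exactly the three exponent multisets appearing in Theorem \ref{thm:add-del}, so any two of the freeness assertions imply the third, with the expected exponents.

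Combining the two parts, in each of the three cases the two given statements supply two freeness facts and two niceness facts; Theorem \ref{thm:add-del} then yields the missing freeness and Theorem \ref{thm:add-del-factored} the missing niceness, and together these constitute the remaining statement. The step I expect to be the main obstacle is the exponent bookkeeping of the third paragraph: one must confirm that the block-size description of exponents from Remark \ref{rem:exponents} is compatible across both deletion and restriction, and it is precisely here that the injectivity of $\R$ is essential, since it guarantees that no two blocks collapse under restriction and that each surviving block size transfers verbatim to $\CA''$.
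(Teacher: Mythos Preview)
Your proposal is correct and matches the approach the paper itself signals: the paper does not give a self-contained proof here but states that Theorem \ref{thm:add-del-free-factored} is obtained ``combining Theorem \ref{thm:add-del-factored} with Terao's Addition-Deletion Theorem \ref{thm:add-del},'' and your argument carries out precisely this combination. Your key observation---that the factorizations, via Remark \ref{rem:exponents}, automatically force the exponent multisets of the free-and-nice members of the triple into the pattern required by Theorem \ref{thm:add-del}---is exactly the point the paper highlights when it remarks that ``we do not need to explicitly require the containment conditions on the sets of exponents \ldots\ This is a consequence of the presence of the underlying factorizations.''
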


Worth noting is the fact that
in Theorem \ref{thm:add-del-free-factored}
we do not need to explicitly require the 
containment conditions on the sets of exponents of the arrangements involved,
see Theorem \ref{thm:add-del}. This is a consequence of the
presence of the underlying factorizations.

We also note that the injectivity condition on $\R$ 
in both theorems is necessary, 
see \cite[Ex.\ 3.3, Ex.\ 3.20]{hogeroehrle:factored}.

The Addition-Deletion Theorem \ref{thm:add-del-factored} 
for nice arrangements motivates
the following stronger notion of factorization, 
cf.\ \cite{jambuparis:factored}, \cite[Def.\ 3.8]{hogeroehrle:factored}.

\begin{defn} 
\label{def:indfactored}
The class $\CIFAC$ of \emph{inductively factored} arrangements 
is the smallest class of pairs $(\CA, \pi)$ of 
arrangements $\CA$ together with a partition $\pi$
subject to
\begin{itemize}
\item[(i)] $(\Phi_\ell, (\varnothing)) \in \CIFAC$ for each $\ell \ge 0$;
\item[(ii)] if there exists a partition $\pi$ of $\CA$ 
and a hyperplane $H_0 \in \pi_1$ such that 
for the triple $(\CA, \CA', \CA'')$ associated with $H_0$ 
the restriction map $\R = \R_{\pi, H_0} : \CA \setminus \pi_1 \to \CA''$ 
is injective and for the induced partitions $\pi'$ of $\CA'$ and 
$\pi''$ of $\CA''$ 
both $(\CA', \pi')$ and $(\CA'', \pi'')$ belong to $\CIFAC$, 
then $(\CA, \pi)$ also belongs to $\CIFAC$.
\end{itemize}
If $(\CA, \pi)$ is in $\CIFAC$, then we say that
$\CA$ is \emph{inductively factored with respect to $\pi$}, or else
that $\pi$ is an \emph{inductive factorization} of $\CA$. 
Sometimes, we simply say $\CA$ is \emph{inductively factored} without 
reference to a specific inductive factorization of $\CA$.
\end{defn}

Our definition of inductively factored arrangements
in Definition \ref{def:indfactored}
differs slightly from the one by 
Jambu and Paris \cite{jambuparis:factored};
see \cite[Rem.\ 3.9]{hogeroehrle:factored}.

\begin{defn}
\label{def:heredindfactored}
In analogy to hereditary freeness and hereditary inductive freeness, 
\cite[Def.\ 4.140, p.\ 253]{orlikterao:arrangements}, 
we say that $\CA$ is \emph{hereditarily factored}
provided $\CA^X$ is factored for every $X \in L(\CA)$
and that $\CA$ is \emph{hereditarily inductively factored}
provided $\CA^X$ is inductively factored for every $X \in L(\CA)$.
\end{defn}

In \cite[Prop.\ 3.11]{hogeroehrle:factored}, we 
strengthened Proposition \ref{prop:ssfactored} as follows.

\begin{proposition} 
\label{prop:ssindfactored}
If $\CA$ is supersolvable, then $\CA$ is inductively factored.
\end{proposition}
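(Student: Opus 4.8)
The plan is to argue by induction on $|\CA|$, using the Addition--Deletion Theorem for nice arrangements (Theorem \ref{thm:add-del-factored}) as the inductive engine and the factorization attached to a modular chain (Proposition \ref{prop:ssfactored}) as the partition we track throughout. If $\CA = \Phi_\ell$, then $(\CA,(\varnothing)) \in \CIFAC$ by Definition \ref{def:indfactored}(i). So suppose $\CA \ne \Phi_\ell$ is supersolvable of rank $r$, fix a maximal chain $V = X_0 < \cdots < X_{r-1} < X_r = T_\CA$ of modular elements, and let $\pi = (\pi_1, \ldots, \pi_r)$ with $\pi_i = \CA_{X_i}\setminus \CA_{X_{i-1}}$ be the nice partition furnished by Proposition \ref{prop:ssfactored}. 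Writing $M := X_{r-1}$ for the modular coatom, we have $\CA_M = \CA_{X_{r-1}}$ and $\pi_r = \CA \setminus \CA_M$. I would choose any $H_0 \in \pi_r$ and, relabelling so that this block plays the role of $\pi_1$ in Definition \ref{def:indfactored}, regard $H_0$ as the distinguished hyperplane of Definition \ref{def:distinguished}, with triple $(\CA, \CA', \CA'')$.

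First I would verify that the restriction map $\R : \CA \setminus \pi_r = \CA_M \to \CA''$, $K \mapsto K \cap H_0$, is injective. If $K_1, K_2 \in \CA_M$ satisfy $K_1 \cap H_0 = K_2 \cap H_0 =: Z$, then $Z$ has codimension $2$ and $Z \subseteq K_1 \cap K_2$; were $K_1 \ne K_2$, a comparison of codimensions would force $Z = K_1 \cap K_2$, whence $M \subseteq K_1 \cap K_2 = Z \subseteq H_0$, contradicting $H_0 \in \pi_r$. Thus $\R$ is injective and the standing hypothesis of Theorem \ref{thm:add-del-factored} holds.

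Next I would treat the deletion $\CA' = \CA \setminus \{H_0\}$. For $1 \le i \le r-1$ one has $X_i \not\subseteq H_0$ (otherwise $H_0 \in \CA_{X_i} \subseteq \CA_M$), so each join $X_i + Y$ with $Y \in L(\CA')$ contains $X_i$ and is therefore not contained in $H_0$; as a flat not lying in $H_0$ survives the deletion, every $X_i$ stays modular in $L(\CA')$. Adjoining the (always modular) center $T_{\CA'}$ on top then produces a maximal modular chain $V = X_0 < \cdots < X_{r-1} < T_{\CA'}$ when $|\pi_r| \ge 2$, while the chain $X_0 < \cdots < X_{r-1}$ already exhausts $L(\CA')=L(\CA_M)$ when $|\pi_r| = 1$. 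In either case $\CA'$ is supersolvable and its modular-chain factorization coincides with the induced partition $\pi' = (\pi_1, \ldots, \pi_{r-1}, \pi_r \setminus\{H_0\})$. As $|\CA'| < |\CA|$, the inductive hypothesis gives $(\CA', \pi') \in \CIFAC$.

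The main obstacle is the restriction $\CA'' = \CA^{H_0}$, where I must show that $\R|_{\CA_M}$ is onto and that the induced partition $\pi''=(\R(\pi_1),\dots,\R(\pi_{r-1}))$ is again a modular-chain factorization, so that the inductive hypothesis applies with the correct partition. Surjectivity is exactly where modularity of $M$ is used: for $H \in \pi_r \setminus \{H_0\}$ and $Z := H_0 \cap H$, the facts $M \not\subseteq H_0$ and $M \cap Z = M \cap H_0$ feed into the dimension formula to show $\codim(M+Z)=1$, so $K := M + Z$ is a hyperplane lying in $\CA_M$ with $K \cap H_0 = Z$; hence every member of $\CA''$ is hit and $\R|_{\CA_M}$ is a bijection $\CA_M \to \CA''$. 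By Stanley's theorem (cf.\ \cite[Prop.\ 3.2]{stanley:super}, recalled in the introduction) $\CA''$ is supersolvable, with the images $Y_i := X_i \cap H_0$ forming a maximal modular chain of rank $r-1$; a short codimension count, resting on the identity $X_i = (X_i \cap H_0) + M$, identifies the factorization of this chain with $\pi''$. Since $|\CA''| < |\CA|$, the inductive hypothesis yields $(\CA'', \pi'') \in \CIFAC$. With $\R$ injective and both $(\CA', \pi')$ and $(\CA'', \pi'')$ in $\CIFAC$, Definition \ref{def:indfactored}(ii) delivers $(\CA, \pi) \in \CIFAC$, completing the induction. I expect the surjectivity of $\R$ and the identification of $\pi''$ as a modular-chain factorization to be the delicate points, whereas the deletion step and the injectivity of $\R$ are comparatively routine.
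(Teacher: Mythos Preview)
The paper does not actually supply a proof of this proposition; it merely records the statement and cites \cite[Prop.~3.11]{hogeroehrle:factored} for the argument. So there is nothing in the present paper to compare your proof against.

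Your inductive argument is correct and is the natural route. The injectivity of $\R$ follows exactly as you say (two distinct $K_1,K_2\in\CA_M$ with the same trace on $H_0$ would force $M\subseteq K_1\cap K_2\subseteq H_0$). The deletion step is sound: since each $X_i$ with $i\le r-1$ satisfies $X_i\not\subseteq H_0$, every sum $X_i+Y$ is a flat of $L(\CA)$ not contained in $H_0$ and hence lies in $L(\CA')$, so the $X_i$ remain modular there; the resulting modular-chain factorization of $\CA'$ is precisely $\pi'$. For the restriction, your key identity $X_i=(X_i\cap H_0)+M$ (valid because $M\subseteq X_i$ and $M\cap H_0=T_\CA$) indeed yields $\R^{-1}\bigl((\CA'')_{Y_i}\bigr)=\CA_{X_i}$, whence the modular-chain factorization of $\CA''$ for the chain $(Y_i)$ coincides with $\pi''$. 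Surjectivity of $\R$ via $K:=M+Z$ is exactly the place where modularity of $M$ is essential, and your dimension count is correct.

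One minor caution: you attribute to Stanley's theorem the statement that the $Y_i=X_i\cap H_0$ form a maximal modular chain in $L(\CA'')$. What the paper actually cites from \cite[Prop.~3.2]{stanley:super} is only that supersolvability passes to restrictions. The sharper claim about the chain is easily supplied directly, using the lattice isomorphism $L(\CA^{H_0})\cong L(\CA)^{H_0}=[H_0,T_\CA]$ together with the fact that modular elements restrict to modular elements in upper intervals; you may want to insert a line to that effect rather than lean on the citation.
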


\begin{remark}
\label{rem:dim2}
Since any $1$- and $2$-arrangement is supersolvable,
Remark \ref{rem:2-arr}, 
each such is inductively factored, by 
Proposition \ref{prop:ssindfactored}.
\end{remark}

In \cite[Prop.\ 2.2]{jambuparis:factored},
Jambu and Paris showed that 
inductively factored arrangements are always inductively free;
see also \cite[Prop.\ 3.14]{hogeroehrle:factored}.
(Jambu and Paris only claimed freeness but their 
proof actually does give the stronger result.)

\begin{proposition}
\label{prop:indfactoredindfree}
Let $\pi = (\pi_1, \ldots, \pi_r)$ be an inductive factorization of $\CA$. 
Then $\CA$ is inductively free with exponents 
$\exp \CA = \{0^{\ell-r}, |\pi_1|, \ldots, |\pi_r|\}$.
\end{proposition}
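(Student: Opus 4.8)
The plan is to argue by induction on the number $|\CA|$ of hyperplanes, matching the recursive structure of $\CIFAC$ from Definition \ref{def:indfactored} with that of $\CIF$ from Definition \ref{def:indfree}. For the base case $\CA = \Phi_\ell$ we have $\pi = (\varnothing)$, and $\Phi_\ell \in \CIF$ with $\exp \Phi_\ell = \{0^\ell\}$, which is precisely the asserted formula in the empty ($r=0$) case. For the inductive step I would unravel Definition \ref{def:indfactored}(ii): there is a hyperplane $H_0 \in \pi_1$ with triple $(\CA,\CA',\CA'')$ such that $\R = \R_{\pi,H_0}$ is injective and both $(\CA',\pi')$ and $(\CA'',\pi'')$ lie in $\CIFAC$. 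Since $|\CA'| = |\CA|-1$ and $|\CA''| < |\CA|$ (as $\CA''$ is a restriction), the inductive hypothesis applies to each.

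First I would record the exponents of $\CA'$ and $\CA''$ delivered by the inductive hypothesis. Because $\R$ is injective (and onto $\CA''$, as $\pi''$ is by assumption a partition of $\CA''$), its restriction to each block is a bijection, so $\pi'' = (\pi_2'', \ldots, \pi_r'')$ consists of $r-1$ nonempty pairwise disjoint blocks with $|\pi_i''| = |\pi_i|$; the inductive hypothesis then gives $\exp \CA'' = \{0^{\ell-r}, |\pi_2|, \ldots, |\pi_r|\}$. For $\CA'$ one must distinguish whether $\pi_1 = \{H_0\}$ (so that $\pi'$ has $r-1$ blocks and $r(\CA')$ drops) or $|\pi_1| \ge 2$ (so that $\pi'$ still has $r$ blocks); the point is that both cases collapse into the single multiset identity $\exp \CA' = \exp \CA'' \cup \{|\pi_1|-1\}$, where in the first case the new entry $|\pi_1|-1 = 0$ merely increases the multiplicity of $0$.

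It then remains to feed this into Terao's Addition-Deletion Theorem \ref{thm:add-del}. Writing $\{b_1, \ldots, b_{\ell-1}\} := \exp \CA''$ and $b_\ell := |\pi_1|$, the identity above reads $\exp \CA' = \{b_1, \ldots, b_{\ell-1}, b_\ell - 1\}$, so conditions (ii) and (iii) of Theorem \ref{thm:add-del} are met, whence $\CA$ is free with $\exp \CA = \{b_1, \ldots, b_{\ell-1}, b_\ell\} = \{0^{\ell-r}, |\pi_1|, \ldots, |\pi_r|\}$, exactly the claimed multiset. Finally, since $\exp \CA'' \subseteq \exp \CA'$ (the latter being the former with one extra entry adjoined) and both $\CA', \CA'' \in \CIF$ by the inductive hypothesis, Definition \ref{def:indfree}(ii) applied to the same $H_0$ yields $\CA \in \CIF$, completing the induction.

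The main obstacle is purely the bookkeeping of ranks and exponents, concentrated in the observation of the second paragraph: a priori the singleton case $\pi_1 = \{H_0\}$ and the non-singleton case behave differently, because deletion of $H_0$ lowers $r(\CA')$ only in the former, yet both are reconciled by the single exponent-shift identity $\exp \CA' = \exp \CA'' \cup \{|\pi_1|-1\}$, which is exactly of the shape required by Theorem \ref{thm:add-del}. Care is also needed to confirm, using the injectivity (and surjectivity) of $\R$, that $(\CA'',\pi'')$ genuinely enters the induction with block-count $r-1$ and block sizes $|\pi_i|$, so that the inductive hypothesis yields the stated exponents for $\CA''$.
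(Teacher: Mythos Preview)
Your proof is correct and follows exactly the natural inductive argument underlying the references cited by the paper. The paper does not supply its own proof of this proposition; it merely records it as \cite[Prop.\ 2.2]{jambuparis:factored} (see also \cite[Prop.\ 3.14]{hogeroehrle:factored}), noting that Jambu and Paris' argument, though stated only for freeness, in fact yields inductive freeness. Your induction on $|\CA|$, matching the recursive clauses of Definitions \ref{def:indfactored} and \ref{def:indfree} and invoking Theorem \ref{thm:add-del} for the exponent bookkeeping, is precisely that argument.
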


\begin{remark}
\label{rem:d4}
The converse of Proposition \ref{prop:indfactoredindfree} is false,
Terao has already noted that the reflection arrangement $\CA(D_4)$  of
the Coxeter group of type $D_4$ is not factored.
But $\CA(D_4)$ is inductively free, \cite[Ex.\ 2.6]{jambuterao:free}.
\end{remark}

\begin{remark}
\label{rem:g333}
Jambu and Paris observed that the reflection arrangement
$\CA(G(3,3,3))$ 
of the complex reflection group $G(3,3,3)$ 
is factored but not inductively factored 
\cite{jambuparis:factored}.
Note that  $\CA(G(3,3,3))$ is not 
inductively free, 
\cite[Thm.\ 1.1]{hogeroehrle:indfree}.
In particular, a free and factored arrangement need not be 
inductively factored; see also 
\cite[Ex.\ 3.20]{hogeroehrle:factored}.
Not even an inductively free and factored arrangement need be
inductively factored, see \cite[Ex.\ 3.22]{hogeroehrle:factored}.
\end{remark}

In \cite[Prop.\ 3.28]{hogeroehrle:factored},
we showed that
the product construction behaves well with factorizations.

\begin{proposition}
\label{prop:product-factored}
Let $\CA_1, \CA_2$ be two arrangements.
Then  $\CA = \CA_1 \times \CA_2$ is nice
if and only if both 
$\CA_1$ and $\CA_2$ are nice.
\end{proposition}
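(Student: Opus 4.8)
The plan is to prove both implications by exploiting the product structure of the intersection lattice. Throughout I would use the standard identification $L(\CA) \cong L(\CA_1) \times L(\CA_2)$, under which every $X \in L(\CA)$ is written uniquely as $X = X_1 \oplus X_2$ with $X_i \in L(\CA_i)$, the rank is additive, $r(X) = r(X_1) + r(X_2)$, and the subarrangement decomposes as a disjoint union $\CA_X = (\CA_1)_{X_1} \sqcup (\CA_2)_{X_2}$ (here each hyperplane $H_1 \in \CA_1$ is identified with $H_1 \oplus V_2 \in \CA$, and likewise for $\CA_2$). In particular $(\CA_i)_{V_i} = \varnothing$, so for $X = X_1 \oplus V_2$ one has $\CA_X = (\CA_1)_{X_1}$.

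For the forward implication, suppose $\pi^1 = (\pi^1_1, \ldots, \pi^1_{r_1})$ and $\pi^2 = (\pi^2_1, \ldots, \pi^2_{r_2})$ are factorizations of $\CA_1$ and $\CA_2$. I would take $\pi$ to be the concatenation of the two partitions (viewing each $\pi^1_i$ and $\pi^2_j$ as a block of $\CA$). Independence of $\pi$ follows from rank additivity: choosing one hyperplane from each block and intersecting gives $(\bigcap_i H^{(i)}_1) \oplus (\bigcap_j K^{(j)}_2)$, whose codimension is $r_1 + r_2$ by independence of $\pi^1$ and $\pi^2$, matching the number of blocks. For the singleton condition, fix $X = X_1 \oplus X_2 \ne V$; then the induced partition $\pi_X$ is exactly the concatenation of $(\pi^1)_{X_1}$ and $(\pi^2)_{X_2}$. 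Since $X \ne V$, at least one $X_i \ne V_i$, and niceness of the corresponding $\pi^i$ supplies a singleton block of $(\pi^i)_{X_i}$, which remains a singleton block of $\pi_X$.

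For the converse, let $\pi = (\pi_1, \ldots, \pi_s)$ be a factorization of $\CA$. The crucial step is to show that every block $\pi_i$ is \emph{pure}, i.e.\ consists entirely of hyperplanes coming from $\CA_1$ or entirely of hyperplanes coming from $\CA_2$. I would argue by contradiction: if some block contained both $H = H_1 \oplus V_2$ and $K = V_1 \oplus K_2$, set $X = H \cap K = H_1 \oplus K_2$, so that $r(X) = 2$. Since the only hyperplanes of $\CA$ containing $X$ are $H$ and $K$ themselves, one has $\CA_X = \{H, K\}$, and both lie in the same block; hence $|\{ i \mid \pi_i \cap \CA_X \ne \varnothing\}| = 1$, contradicting Corollary \ref{cor:teraofactored}(iii). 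With purity established, $\pi$ splits as a concatenation of its $\CA_1$-blocks $\pi^1$ and its $\CA_2$-blocks $\pi^2$. Independence of $\pi^1$ in $\CA_1$ follows from independence of $\pi$ together with rank additivity: choosing one hyperplane from each block yields codimension $s$ in $V$, equal to the sum of the codimensions contributed in $V_1$ and $V_2$, and since each summand is at most the corresponding number of blocks while the two sum to $s$, each must be maximal. The singleton condition for $\pi^1$ at any $X_1 \ne V_1$ is then obtained by applying niceness of $\pi$ to $X = X_1 \oplus V_2$, using $\CA_X = (\CA_1)_{X_1}$ so that $\pi_X = (\pi^1)_{X_1}$. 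Thus $\pi^1$ is nice for $\CA_1$ and, symmetrically, $\pi^2$ is nice for $\CA_2$.

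I expect the purity claim in the converse to be the main obstacle, since it is precisely what prevents a factorization of the product from genuinely entangling the two factors; happily it reduces to the combinatorial rank identity of Corollary \ref{cor:teraofactored}(iii). Everything else is bookkeeping with the lattice decomposition and rank additivity.
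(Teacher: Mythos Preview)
Your argument is correct in both directions. The forward direction is straightforward bookkeeping, and in the converse the purity claim is indeed the only nontrivial point; your use of Corollary~\ref{cor:teraofactored}(iii) applied to the rank~$2$ element $X = H_1 \oplus K_2$ with $\CA_X = \{H,K\}$ is exactly the right observation (equivalently, the singleton condition of Definition~\ref{def:factored}(ii) fails at this $X$). The deduction of independence for $\pi^1$ from independence of $\pi$ via rank additivity is also clean.

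Note, however, that the present paper does not actually supply a proof of Proposition~\ref{prop:product-factored}; it merely records the statement and cites \cite[Prop.~3.28]{hogeroehrle:factored}. So there is no ``paper's own proof'' to compare against here. Your self-contained argument is precisely the kind of direct lattice-theoretic proof one would expect, and nothing more is needed.
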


And in \cite[Prop.\ 3.29]{hogeroehrle:factored},
we strengthened  
Proposition \ref{prop:product-factored}
further by showing that 
the compatibility with products restricts 
to the class of inductively factored arrangements.

\begin{proposition}
\label{prop:product-indfactored}
Let $\CA_1, \CA_2$ be two arrangements.
Then  $\CA = \CA_1 \times \CA_2$ is 
inductively factored if and only if both 
$\CA_1$ and $\CA_2$ are 
inductively factored  and in that case
the multiset of exponents of $\CA$ is given by 
$\exp \CA = \{\exp \CA_1, \exp \CA_2\}$.
\end{proposition}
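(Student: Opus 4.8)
The plan is to prove the sharper statement at the level of pairs: writing $\hat\pi = \pi\times\sigma$ for the partition of $\CA = \CA_1\times\CA_2$ whose blocks are the blocks of a partition $\pi$ of $\CA_1$ (viewed via $H_1\mapsto H_1\oplus V_2$) together with the blocks of a partition $\sigma$ of $\CA_2$ (viewed via $H_2 \mapsto V_1\oplus H_2$), I would show that $(\CA,\hat\pi)\in\CIFAC$ if and only if $(\CA_1,\pi)\in\CIFAC$ and $(\CA_2,\sigma)\in\CIFAC$. The Proposition follows at once: for the ``if'' direction one takes inductive factorizations $\pi,\sigma$ and forms $\pi\times\sigma$; for the ``only if'' direction one takes an inductive factorization $\rho$ of $\CA$, observes (next paragraph) that $\rho$ is necessarily of product form $\pi\times\sigma$, and applies the sharper statement. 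The exponent formula is then immediate from Proposition \ref{prop:indfactoredindfree}, which expresses the exponents of an inductively factored arrangement through the sizes of its blocks, since the nonzero block sizes of $\pi\times\sigma$ are exactly those of $\pi$ together with those of $\sigma$.

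The first ingredient is a splitting lemma: every factorization of a product $\CA=\CA_1\times\CA_2$ has product form. To see this I would apply Corollary \ref{cor:teraofactored}(iii) to the two lattice elements $X = T_{\CA_1}\oplus V_2$ and $Y = V_1\oplus T_{\CA_2}$. One checks that $\CA_X$ is precisely the set of hyperplanes $H_1\oplus V_2$ (the ``$\CA_1$-part'') and $\CA_Y$ precisely the set $V_1\oplus H_2$ (the ``$\CA_2$-part''), while $r(X)=r_1:=r(\CA_1)$ and $r(Y)=r_2:=r(\CA_2)$. Hence exactly $r_1$ blocks of any factorization meet the $\CA_1$-part and exactly $r_2$ meet the $\CA_2$-part; since $\CA$ is the disjoint union of these two parts and a factorization has exactly $r=r_1+r_2$ blocks (Corollary \ref{cor:teraofactored}(i)), no block can meet both. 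Thus each block lies entirely in one factor, i.e.\ the factorization is of the form $\pi\times\sigma$.

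For the sharper biconditional I would induct on $|\CA| = |\CA_1|+|\CA_2|$, the base case $\CA=\Phi_\ell$ being trivial. The geometric input is that deletion and restriction are compatible with the product: if $H_0\in\CA_1$ and $\hat H_0 = H_0\oplus V_2$, then $\CA\setminus\{\hat H_0\} = \CA_1'\times\CA_2$ and $\CA^{\hat H_0} = \CA_1''\times\CA_2$, where $(\CA_1,\CA_1',\CA_1'')$ is the triple of $H_0$ in $\CA_1$. Moreover the induced partitions stay in product form, $\hat\pi' = \pi'\times\sigma$ and $\hat\pi'' = \pi''\times\sigma$, and the map $\R_{\hat\pi,\hat H_0}$ is identified with $\R_{\pi,H_0}$ on the $\CA_1$-part, is automatically injective on the $\CA_2$-part, and separates the two parts; hence $\R_{\hat\pi,\hat H_0}$ is injective precisely when $\R_{\pi,H_0}$ is. Given these identifications both directions of Definition \ref{def:indfactored}(ii) go through: for ``$\Leftarrow$'' a distinguished hyperplane of $(\CA_1,\pi)$ (or of $(\CA_2,\sigma)$, by symmetry when $\CA_1=\Phi_{\ell_1}$) yields a distinguished $\hat H_0$ for $(\CA,\hat\pi)$, and the induction hypothesis applied to the smaller products $\CA_1'\times\CA_2$ and $\CA_1''\times\CA_2$ places $(\CA',\hat\pi')$ and $(\CA'',\hat\pi'')$ in $\CIFAC$; for ``$\Rightarrow$'' the distinguished $\hat H_0$ lies, by the splitting lemma, in one factor, say the first, and the induction hypothesis applied to $(\CA_1'\times\CA_2,\pi'\times\sigma)$ and $(\CA_1''\times\CA_2,\pi''\times\sigma)$ yields $(\CA_1',\pi'),(\CA_1'',\pi''),(\CA_2,\sigma)\in\CIFAC$, whence $(\CA_1,\pi)\in\CIFAC$ using the injectivity of $\R_{\pi,H_0}$.

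I expect the main obstacle to be the bookkeeping in the splitting lemma and in verifying that deletion, restriction, the induced partitions, and the restriction maps all respect the product decomposition simultaneously and in the correct way; the injectivity transfer for $\R$ is the delicate point, since it is exactly this condition (beyond mere niceness, already settled by Proposition \ref{prop:product-factored}) that separates inductive factorizations from arbitrary ones. Once the product-compatibility identities are in hand, both inductive steps are formal applications of Definition \ref{def:indfactored}(ii).
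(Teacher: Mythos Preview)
The paper does not actually prove this proposition here: it is quoted from the companion paper \cite[Prop.~3.29]{hogeroehrle:factored} without argument, so there is no in-paper proof to compare against. Your proposal supplies a correct and self-contained proof along what is almost certainly the intended line. The splitting lemma (every factorization of a product has product form, obtained by applying Corollary~\ref{cor:teraofactored}(iii) to $T_{\CA_1}\oplus V_2$ and $V_1\oplus T_{\CA_2}$ and counting blocks) together with the verification that deletion, restriction, the induced partitions, and the map $\R$ all respect the product decomposition are exactly the right ingredients; the induction on $|\CA|$ then goes through formally via Definition~\ref{def:indfactored}(ii) as you describe, and the exponent statement follows from Proposition~\ref{prop:indfactoredindfree}.

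One minor remark: in the ``$\Rightarrow$'' step of the sharper biconditional you do not need to invoke the splitting lemma again to place $\hat H_0$ in one factor, since at that point $\hat\pi=\pi\times\sigma$ is already assumed to be of product form and every block of $\hat\pi$ lies in a single factor by construction. The splitting lemma is needed only once, up front, to pass from an arbitrary inductive factorization of $\CA$ to one of product form (this uses implicitly that any $(\CA,\rho)\in\CIFAC$ has $\rho$ nice, which follows inductively from Theorem~\ref{thm:add-del-factored}).
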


\subsection{Hereditarily factored Arrangements}
\label{sect:heredfactored}

While a $2$-arrangement is always inductively factored,
by Remark \ref{rem:2-arr},  
in general, a factored $3$-arrangement need not be inductively factored,
not even if it is a  reflection arrangement,
see Remark \ref{rem:g333}.
Nevertheless, for a $3$-arrangement, 
we have the following 
counterpart to \cite[Lem.\ 2.15]{hogeroehrle:indfree}
in our setting; see \cite[Lem.\ 3.26]{hogeroehrle:factored}.

\begin{lemma}
\label{lem:3-arr}
Suppose that $\ell = 3$. Then 
$\CA$ is (inductively) factored if and only if it is 
hereditarily (inductively) factored.
\end{lemma}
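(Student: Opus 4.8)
The plan is to prove both equivalences simultaneously by a dimension count on the restrictions $\CA^X$, handling the factored and inductively factored cases in parallel since the argument is identical in structure.

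First I would dispose of the easy implication. If $\CA$ is hereditarily (inductively) factored, then by definition $\CA^X$ is (inductively) factored for every $X \in L(\CA)$; taking $X = V$ and recalling that $\CA^V = \CA$ immediately yields that $\CA$ itself is (inductively) factored. For the converse, suppose $\CA$ is (inductively) factored and let $X \in L(\CA)$ be arbitrary. If $X = V$, then $\CA^X = \CA$ is (inductively) factored by hypothesis. Otherwise $X \ne V$, so $r(X) = \codim_V(X) \ge 1$ and hence $\dim X = \ell - r(X) = 3 - r(X) \le 2$. Thus $\CA^X$ is an arrangement in a space of dimension at most $2$, that is, a $0$-, $1$-, or $2$-arrangement. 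By Remark \ref{rem:2-arr} every such arrangement is supersolvable, so it is factored by Proposition \ref{prop:ssfactored} and inductively factored by Proposition \ref{prop:ssindfactored} (equivalently, by Remark \ref{rem:dim2}). Consequently $\CA^X$ is (inductively) factored for every $X \in L(\CA)$, which is precisely the assertion that $\CA$ is hereditarily (inductively) factored.

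There is essentially no technical obstacle here: the entire content lies in the elementary observation that when the ambient space has rank three, every proper flat $X \ne V$ has dimension at most two, placing the corresponding restriction $\CA^X$ squarely in the regime where niceness and inductive niceness hold automatically. The only point requiring minor care is to treat $X = V$ separately, since $\CA^V = \CA$ is the single restriction whose niceness must be supplied by the hypothesis rather than by the low-dimensional triviality of Remark \ref{rem:dim2}.
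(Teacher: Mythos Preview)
Your proof is correct and is exactly the natural elementary argument. Note that the paper does not actually give its own proof of this lemma: it merely cites \cite[Lem.~3.26]{hogeroehrle:factored}, so there is no paper proof to compare against beyond observing that your dimension-count argument is the standard one.
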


The compatibility from Propositions \ref{prop:product-factored} 
and \ref{prop:product-indfactored}
restricts further to the classes of hereditarily factored and 
hereditarily inductively factored arrangements, respectively;
see \cite[Cor.\ 3.31]{hogeroehrle:factored}.

\begin{corollary}
\label{cor:product-heredindfactored}
Let $\CA_1, \CA_2$ be two arrangements.
Then  $\CA = \CA_1 \times \CA_2$ is 
hereditarily (inductively) factored if and only if both 
$\CA_1$ and $\CA_2$ are 
hereditarily (inductively) factored.
In case of inductively factored arrangements
the multiset of exponents of $\CA$ is given by 
$\exp \CA = \{\exp \CA_1, \exp \CA_2\}$.
\end{corollary}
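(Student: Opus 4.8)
The plan is to reduce the statement to the non-hereditary product results, Propositions~\ref{prop:product-factored} and~\ref{prop:product-indfactored}, by understanding how restriction interacts with the product construction. The crucial observation is that taking restrictions commutes with forming products. Concretely, writing $\CA = \CA_1 \times \CA_2$ with $(\CA_i, V_i)$, the intersection lattice decomposes as a direct product, $L(\CA) = \{ X_1 \oplus X_2 \mid X_1 \in L(\CA_1),\ X_2 \in L(\CA_2)\}$, since any intersection of the defining hyperplanes $H_1 \oplus V_2$ and $V_1 \oplus H_2$ has this shape. The first step is therefore to establish the restriction identity
\[
\CA^{X_1 \oplus X_2} = \CA_1^{X_1} \times \CA_2^{X_2}
\qquad\text{for all } X_1 \in L(\CA_1),\ X_2 \in L(\CA_2).
\]

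To verify this I would simply unwind the definitions. A hyperplane $H_1 \oplus V_2$ lies in $\CA_{X_1 \oplus X_2}$ precisely when $X_1 \subseteq H_1$, i.e.\ $H_1 \in (\CA_1)_{X_1}$; hence the surviving hyperplanes are those with $H_1 \in \CA_1 \setminus (\CA_1)_{X_1}$, and for such $H_1$ one computes $(X_1 \oplus X_2) \cap (H_1 \oplus V_2) = (X_1 \cap H_1) \oplus X_2$. As $H_1$ ranges over $\CA_1 \setminus (\CA_1)_{X_1}$, the intersections $X_1 \cap H_1$ range exactly over $\CA_1^{X_1}$, so these contribute precisely the first factor of $\CA_1^{X_1} \times \CA_2^{X_2}$, viewed inside $X_1 \oplus X_2$. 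The symmetric computation with the hyperplanes $V_1 \oplus H_2$ yields the second factor, establishing the identity.

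With the restriction identity in hand, the corollary follows formally. By Definition~\ref{def:heredindfactored}, $\CA$ is hereditarily factored iff $\CA^X$ is factored for every $X \in L(\CA)$, i.e.\ iff $\CA_1^{X_1} \times \CA_2^{X_2}$ is factored for every pair $(X_1, X_2)$. By Proposition~\ref{prop:product-factored} this holds iff each of $\CA_1^{X_1}$ and $\CA_2^{X_2}$ is factored for every such pair. Specialising $X_2 = V_2$ (resp.\ $X_1 = V_1$) and using $\CA_i^{V_i} = \CA_i$ shows this is equivalent to $\CA_1^{X_1}$ being factored for all $X_1 \in L(\CA_1)$ together with $\CA_2^{X_2}$ being factored for all $X_2 \in L(\CA_2)$, that is, to both $\CA_1$ and $\CA_2$ being hereditarily factored. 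The identical argument with Proposition~\ref{prop:product-indfactored} in place of Proposition~\ref{prop:product-factored} handles the inductively factored case, while the exponent formula $\exp \CA = \{\exp \CA_1, \exp \CA_2\}$ is read off from Proposition~\ref{prop:product-indfactored} applied to $X = V$, where $\CA^V = \CA$.

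I expect the only genuine content to reside in the restriction identity of the first step; everything afterwards is a purely formal manipulation of the biconditionals, bridged by the lattice decomposition $L(\CA) = L(\CA_1) \times L(\CA_2)$. Even the first step is essentially bookkeeping about which hyperplanes survive restriction, so I anticipate no real obstacle: the substance is carried entirely by the two product propositions, and the hereditary versions add nothing beyond quantifying over all $X$.
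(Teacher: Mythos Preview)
Your argument is correct and is precisely the natural proof. The paper itself does not give a proof here; the corollary is simply cited from \cite[Cor.~3.31]{hogeroehrle:factored}. The restriction identity $\CA^{X_1 \oplus X_2} = \CA_1^{X_1} \times \CA_2^{X_2}$ you establish is the standard compatibility of products with restriction (cf.\ \cite[Prop.~2.14]{orlikterao:arrangements} for the lattice decomposition), and once this is in hand the hereditary statements reduce immediately to Propositions~\ref{prop:product-factored} and~\ref{prop:product-indfactored}, exactly as you describe. One minor remark: in the step where you pass from ``for every pair $(X_1,X_2)$, both $\CA_1^{X_1}$ and $\CA_2^{X_2}$ are factored'' to the separated statement, the equivalence is a trivial logical tautology (since the two conditions are independent of one another and the index sets are nonempty); the specialisation $X_j = V_j$ is not really needed, though it does no harm.
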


\subsection{Reflection Groups and Reflection Arrangements}
\label{ssect:refl}
The irreducible finite complex reflection groups were 
classified by Shephard and Todd, \cite{shephardtodd}.
Let $W  \subseteq \GL(V)$ be a finite complex reflection group.
For $w \in W$, we write 
$\Fix(w) :=\{ v\in V \mid w v = v\}$ for 
the fixed point subspace of $w$.
For $U \subseteq V$ a subspace, we 
define the \emph{parabolic subgroup}
$W_U$ of $W$ by 
$W_U := \{w \in W \mid U \subseteq \Fix(w)\}$.

The \emph{reflection arrangement} $\CA = \CA(W)$ of $W$ in $V$ is 
the hyperplane arrangement 
consisting of the reflecting hyperplanes of the elements in $W$
acting as reflections on $V$.
By Steinberg's Theorem \cite[Thm.\ 1.5]{steinberg:invariants},
for $U \subseteq V$ a subspace, 
the parabolic subgroup
$W_U$ is itself a complex reflection group,
generated by the unitary reflections in $W$ that are contained
in $W_U$. 
Thus, we identify the 
reflection arrangement $\CA(W_U)$
of $W_U$ as a subarrangement of $\CA$.

First we record a consequence of the classification of the
inductively free reflection arrangements,
Theorem \ref{thm:indfreerefl}:

\begin{corollary}
\label{cor:parabolicindfree}
Let $\CA = \CA(W)$ be an inductively free reflection
arrangement. Then $\CA(W_X)$ is also inductively free for any 
parabolic subgroup $W_X$  of $W$.
\end{corollary}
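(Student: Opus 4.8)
The plan is to reduce the statement to the classification in Theorem \ref{thm:indfreerefl} applied directly to the parabolic subgroup $W_X$. By Steinberg's theorem $W_X$ is again a finite complex reflection group, and its reflection arrangement is the localization $\CA(W_X) = \CA(W)_X$. Hence, by Theorem \ref{thm:indfreerefl}, $\CA(W_X)$ is inductively free precisely when $W_X$ has no irreducible factor isomorphic to one of the \emph{bad} groups $G(r,r,\ell)$ (for $r,\ell \ge 3$), $G_{24}, G_{27}, G_{29}, G_{31}, G_{33}$, or $G_{34}$. So it suffices to prove that if $\CA(W)$ is inductively free, then no bad group occurs as an irreducible factor of $W_X$.

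First I would reduce to the case that $W$ is irreducible. If $W = W_1 \times \cdots \times W_k$, then $\CA(W) = \CA(W_1) \times \cdots \times \CA(W_k)$, and any parabolic decomposes compatibly as $W_X = (W_1)_{X_1} \times \cdots \times (W_k)_{X_k}$. By Proposition \ref{prop:product-indfree}, $\CA(W)$ is inductively free iff each $\CA(W_i)$ is, and similarly $\CA(W_X)$ is inductively free iff each $\CA((W_i)_{X_i})$ is. Thus it is enough to treat irreducible $W$, and for such $W$ we may assume, by Theorem \ref{thm:indfreerefl}, that $W$ is itself not a bad group.

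The core of the argument is then a direct inspection of the parabolic subgroups of each good irreducible $W$, organized by rank and type. If $\rank W \le 2$, every $W_X$ has rank at most $2$ and is therefore supersolvable, hence inductively free (Remark \ref{rem:2-arr}); this already disposes of the concern, since each bad group has rank at least $3$. If $W$ is a finite Coxeter group, every parabolic subgroup is again a Coxeter group, whose reflection arrangement is inductively free by the Coxeter case of Theorem \ref{thm:indfreerefl} (Barakat--Cuntz); in particular no bad \emph{complex} group can arise as a factor. If $W = G(r,p,\ell)$ is a monomial group with $p \ne r$, I would use the explicit list of reflections of $G(r,p,\ell)$: taking $X$ spanned by coordinate vectors, and allowing coordinate identifications, shows that $W_X$ is a product of a monomial group $G(r,p,m)$ with the \emph{same} parameter $p \ne r$ together with symmetric groups, all of which are good.

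The remaining obstacle, and the only genuinely case-specific point, is the exceptional good groups. The Coxeter ones among them ($G_{23} = H_3$, $G_{28} = F_4$, $G_{30} = H_4$, and $E_6, E_7, E_8$) are covered above, while the rank-$3$ non-Coxeter ones (such as $G_{25}$ and $G_{26}$) have only parabolic subgroups of rank at most $2$, hence good. The one group requiring explicit verification is $G_{32}$ of rank $4$, which is the sole non-Coxeter good exceptional group of rank at least $4$: here I would invoke the known classification of its parabolic subgroups to confirm that its proper parabolics are of type $G_{25}$ or of rank at most $2$, none of which is bad. This explicit reflection-group bookkeeping for the monomial groups and for $G_{32}$ is the hard part; once it is in place, $W_X$ never acquires a bad irreducible factor, and Theorem \ref{thm:indfreerefl} yields that $\CA(W_X)$ is inductively free.
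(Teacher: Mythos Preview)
Your proposal is correct and follows essentially the same route as the paper. The paper's proof is a one-line appeal to Theorem~\ref{thm:indfreerefl} together with the classification of parabolic subgroups of irreducible complex reflection groups in \cite[\S 6.4, App.~C]{orlikterao:arrangements}; you have unpacked that citation into an explicit case-by-case verification (rank $\le 2$, Coxeter, monomial $G(r,p,\ell)$ with $p\ne r$, and the good exceptionals $G_{25},G_{26},G_{32}$), which is exactly what the Orlik--Terao tables encode.
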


\begin{proof}
The result follows readily from 
Theorem \ref{thm:indfreerefl} and 
the classification of the parabolic subgroups of the 
irreducible complex reflection groups, 
\cite[\S 6.4, App.\ C]{orlikterao:arrangements}.
\end{proof}

Next we record an elementary but nevertheless
very useful inductive tool.

\begin{lemma}
\label{lem:parabolicfactored}
Let $\CA = \CA(W)$ be a nice reflection
arrangement. Then $\CA(W_X)$ is also nice for any 
parabolic subgroup $W_X$  of $W$.
\end{lemma}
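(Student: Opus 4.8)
The plan is to reduce the statement to the combinatorial behaviour of nice partitions under passage to subarrangements of the form $\CA_X$. The key observation is that the parabolic subarrangements of a reflection arrangement are precisely the subarrangements $\CA_X$ for $X \in L(\CA)$, after which niceness is inherited essentially for free.

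First I would fix $X \in L(\CA)$ and identify $\CA(W_X)$ with $\CA_X$. By Steinberg's Theorem \cite[Thm.\ 1.5]{steinberg:invariants}, the parabolic subgroup $W_X = \{w \in W \mid X \subseteq \Fix(w)\}$ is generated by the reflections of $W$ whose reflecting hyperplane contains $X$. Consequently the reflecting hyperplanes of $W_X$ are exactly the members $H \in \CA$ with $X \subseteq H$, so that $\CA(W_X) = \CA_X$; this is the identification already alluded to in the text when $\CA(W_U)$ is viewed as a subarrangement of $\CA$. For the degenerate case $X = V$ the group $W_V$ is trivial and $\CA(W_V) = \Phi_\ell$, which is vacuously nice by Remark \ref{rem:factored}(i), so that case requires no further argument.

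Next, since $\CA$ is nice, I would choose a factorization $\pi$ of $\CA$ and invoke Remark \ref{rem:factored}(ii): for each $X \in L(\CA) \setminus \{V\}$ the non-empty blocks of the induced partition $\pi_X$ form a nice partition of $\CA_X$. Combined with the identification $\CA_X = \CA(W_X)$ from the previous step, this exhibits $\pi_X$ as a factorization of $\CA(W_X)$, so $\CA(W_X)$ is nice. Because every parabolic subgroup of $W$ arises as $W_X$ for some $X \in L(\CA)$ — indeed $W_U = W_X$ with $X = \bigcap\{H \in \CA \mid U \subseteq H\}$, as $W_U$ depends only on which reflecting hyperplanes contain $U$ — this covers all parabolic subgroups and completes the proof.

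I do not expect a genuine obstacle here: the entire content lies in the identification $\CA(W_X) = \CA_X$, and niceness then transfers directly from the already-established combinatorics of induced partitions. The only point demanding mild care is checking that the non-empty blocks of $\pi_X$ really do partition all of $\CA_X$ (and not merely a proper subset), but this is exactly the assertion furnished by Remark \ref{rem:factored}(ii).
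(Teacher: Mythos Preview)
Your proof is correct and follows essentially the same approach as the paper: identify $\CA(W_X) = \CA_X$ (the paper cites \cite[Thm.\ 6.27, Cor.\ 6.28]{orlikterao:arrangements} rather than invoking Steinberg directly) and then apply Remark~\ref{rem:factored}(ii). Your extra care with the case $X = V$ and with the reduction from an arbitrary subspace $U$ to some $X \in L(\CA)$ is more detail than the paper gives, but the argument is the same.
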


\begin{proof}
Note that for $X \in L(\CA)$, we have 
$\CA(W_X) = \CA_X$, cf.\ 
\cite[Thm.\ 6.27, Cor.\ 6.28]{orlikterao:arrangements}.
The desired result follows from
Remark \ref{rem:factored}(ii). 
\end{proof}

Using Corollary \ref{cor:indfactoredrefl}, 
Lemma \ref{lem:parabolicfactored} restricts 
to inductively factored reflection arrangements:

\begin{corollary}
\label{cor:parabolicindfactored}
Let $\CA = \CA(W)$ be an inductively factored reflection
arrangement. Then $\CA(W_X)$ is also inductively factored for any 
parabolic subgroup $W_X$  of $W$.
\end{corollary}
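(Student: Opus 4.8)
The plan is to transfer the property through the equivalence supplied by Corollary~\ref{cor:indfactoredrefl}, using that the parabolic arrangement $\CA(W_X)$ is a \emph{localization} of $\CA(W)$. First I would recall, exactly as in the proof of Lemma~\ref{lem:parabolicfactored}, that for $X \in L(\CA)$ one has $\CA(W_X) = \CA_X = \{H \in \CA \mid X \subseteq H\}$, whose intersection lattice is the lower interval $L(\CA_X) = L(\CA)_X = [V,X]$ of $L(\CA)$. Since $W_X$ is itself a finite complex reflection group by Steinberg's theorem, Corollary~\ref{cor:indfactoredrefl} is available for $W_X$ as well as for $W$.

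The argument then reduces, in two applications of Corollary~\ref{cor:indfactoredrefl}, to a statement about supersolvability alone. On the one hand, the hypothesis that $\CA(W)$ is inductively factored is equivalent to $\CA(W)$ being supersolvable; on the other hand, to conclude that $\CA(W_X)$ is inductively factored it suffices to prove that $\CA(W_X)$ is supersolvable. Thus the whole corollary collapses to the lattice-theoretic claim that supersolvability is inherited by localizations: if $\CA$ is supersolvable and $X \in L(\CA)$, then $\CA_X$ is supersolvable.

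To prove this claim I would work directly with a maximal chain $V = X_0 < X_1 < \cdots < X_\ell = \{0\}$ of modular elements of $L(\CA)$ and consider the lattice meets $X_i \wedge X$, which lie in $[V,X] = L(\CA_X)$. A short verification shows that the meet of a modular element with $X$ is again modular in the lower interval; and the modular rank identity $r(X_i \wedge X) + r(X_i \vee X) = r(X_i) + r(X)$, valid because each $X_i$ is modular, forces the ranks of the successive meets to increase by at most one. Deleting repetitions therefore yields a saturated chain of modular elements running from $V$ to the center $X$ of $\CA_X$, which exhibits $\CA_X = \CA(W_X)$ as supersolvable. A final appeal to Corollary~\ref{cor:indfactoredrefl} upgrades supersolvability back to inductive factorization.

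The step I expect to be the main obstacle is exactly this inheritance of supersolvability under localization. The delicate point is that the Stanley-type result cited earlier in the paper governs \emph{restrictions} $\CA^X$, that is, upper-interval data, whereas here one needs the \emph{localization} $\CA_X = \CA(W_X)$, that is, lower-interval data; the two behave differently, so the quoted form does not apply verbatim and must be replaced by the lower-interval argument above. If one prefers to bypass the lattice computation, the alternative is to mirror the proof of Corollary~\ref{cor:parabolicindfree}: combine Theorem~\ref{thm:super} with the classification of parabolic subgroups of the irreducible complex reflection groups from \cite[\S 6.4, App.\ C]{orlikterao:arrangements} and check that a parabolic of any supersolvable-type factor is again of supersolvable type. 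The only case needing attention there is that the nice-but-non-supersolvable group $G(r,r,3)$ never arises as a parabolic of a supersolvable $G(r,p,\ell)$ with $p \ne r$: retaining all three pairwise reflection hyperplanes on a fixed subspace forces a coordinate hyperplane to be retained as well, so the resulting parabolic is strictly larger than $G(r,r,3)$.
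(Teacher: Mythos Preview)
Your approach is correct and matches the paper's: reduce via Corollary~\ref{cor:indfactoredrefl} to the claim that supersolvability passes to the localization $\CA_X$, then apply Corollary~\ref{cor:indfactoredrefl} once more to $W_X$. The only divergence is your concern that Stanley's result governs restrictions rather than localizations. In fact the paper's proof invokes \cite[Prop.~3.2]{stanley:super} precisely for the interval $L(\CA_X)=[V,X]$, and the earlier remark about restrictions was explicitly labelled ``a special case'' of that result; Stanley's proposition applies to intervals in a supersolvable lattice, hence to both $[V,X]$ and $[X,T_\CA]$. So the paper simply cites Stanley for the localization step, whereas you reprove that instance by hand via the meets $X_i \wedge X$ and also sketch a classification-based alternative. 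Both of your routes are valid but superfluous once one recognizes that Stanley's interval result already covers the case at hand.
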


\begin{proof}
As noted above, for $X \in L(\CA)$, we have 
$\CA(W_X) = \CA_X$, cf.\ 
\cite[Thm.\ 6.27, Cor.\ 6.28]{orlikterao:arrangements}.
Moreover, since 
$L(\CA_X) = L(\CA)_X = [V,X]$ is an interval in $L(\CA)$, 
\cite[Def.\ 2.10]{orlikterao:arrangements},
the result follows from 
Corollary \ref{cor:indfactoredrefl} and Stanley's
result \cite[Prop.\ 3.2]{stanley:super}.
\end{proof}

\section{Nice Reflection Arrangements}
\label{sect:proof}

In this section we provide proofs of Theorems \ref{thm:factoredrefl} and \ref{thm:heredfactored}.

\subsection{Proof of Theorem \ref{thm:factoredrefl}.}
\label{ss:findfactored}

It follows from 
Proposition \ref{prop:ssfactored}
that if $\CA$ is supersolvable, then it is nice.
Thus, by Proposition \ref{prop:product-factored},
we may assume that $W$ is irreducible so that 
$\CA(W)$ is not supersolvable.
Then in view of Theorem \ref{thm:super}, 
Theorem \ref{thm:factoredrefl}  follows from 
Lemmas \ref{lem:grr3} -- \ref{lem:g31} below.

\begin{lemma}
\label{lem:grr3}
Let $W = G(r,r,3)$ for $r \ge 3$. 
Then $\CA(W)$ is nice.
\end{lemma}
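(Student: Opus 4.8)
The plan is to exhibit one explicit factorization of $\CA = \CA(G(r,r,3))$ and verify the two clauses of Definition \ref{def:factored} directly. I realize $W = G(r,r,3)$ acting on $V = \BBC^3$, so that $\CA$ consists of the $3r$ hyperplanes
\[
H_{ij}^k = \ker(x_i - \zeta^k x_j), \qquad 1 \le i < j \le 3,\ 0 \le k \le r-1,
\]
where $\zeta$ is a primitive $r$-th root of unity. Since a reflection arrangement is free, Remark \ref{rem:exponents} together with the factorizations of the Poincar\'e polynomial (Theorem \ref{thm:freefactors}, Corollary \ref{cor:teraofactored}) forces the block sizes of any nice partition to equal the exponents $\exp \CA = \{1, r+1, 2r-2\}$, which one reads off from the characteristic polynomial. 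This both dictates the shape to aim for and, via Theorem \ref{thm:super}, explains that $\CA$ is not supersolvable, so no modular chain is available and a bare-hands construction is unavoidable.

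First I would pin down the rank-$2$ part of $L(\CA)$, the only nontrivial stratum. A short counting argument, comparing $\binom{3r}{2}$ with the pairs it accounts for, shows there are exactly two kinds of rank-$2$ flats: the $r^2$ \emph{triple points} $\{H_{12}^a, H_{23}^c, H_{13}^{a+c}\}$, each carrying three hyperplanes of mixed type, and the three \emph{coordinate flats} $\{H_{ij}^0, \ldots, H_{ij}^{r-1}\}$, each carrying all $r$ hyperplanes of one type. In particular three distinct hyperplanes are linearly dependent precisely when they lie on a common flat of one of these two kinds; this is the statement that will drive the independence check.

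I would then propose the partition $\pi = (\pi_1, \pi_2, \pi_3)$ given by
\[
\pi_1 = \{H_{23}^0\}, \quad \pi_2 = \{H_{23}^k : k \ne 0\} \cup \{H_{12}^0, H_{13}^0\}, \quad \pi_3 = \{H_{12}^k, H_{13}^k : k \ne 0\},
\]
of sizes $1$, $r+1$, $2r-2$. For independence (Definition \ref{def:independent}) I would check that no transversal of $\pi$ lies on a rank-$2$ flat: a coordinate-flat transversal is impossible because type $(2,3)$ meets only $\pi_1\cup\pi_2$ and types $(1,2),(1,3)$ meet only $\pi_2\cup\pi_3$, so no single type spreads over all three blocks; and a triple-point transversal is impossible because only the $c=0$ triples meet $\pi_1$, while in those the members $H_{12}^a$ and $H_{13}^a$ always lie in a common block. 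For the singleton clause I would run through the flats: each coordinate flat splits as $1+(r-1)$ across two blocks and so supplies a singleton; each triple point falls into a $2$-$1$ pattern and so again leaves a singleton; the centre $X=\{0\}$ is covered by the singleton $\pi_1$, and the rank-$1$ flats are trivial.

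The main obstacle is the simultaneous bookkeeping at the $r^2$ triple points. Independence forbids the ``rainbow'' distribution (one member in each block) and the singleton clause forbids the ``monochromatic'' one (all three in a single block), so every triple must realize an exact $2$-$1$ pattern. Verifying that the displayed $\pi$ forces this for all triples and all $r \ge 3$ — in particular that the index relation $b = a+c$ never pushes a triple into a rainbow — is the delicate computation; once it is in hand, the remaining verifications reduce to routine linear independence of explicit normal vectors and a direct inspection of the induced partitions.
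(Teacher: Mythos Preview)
Your proposal is correct and follows essentially the same approach as the paper: both exhibit the same explicit partition (up to the harmless relabeling of coordinate pairs---your singleton is $H_{23}^0$ where the paper takes $A_0 = \ker(x-y)$), then verify Definition~\ref{def:factored} by enumerating the rank-$2$ flats (the three coordinate flats and the $r^2$ triple points) and checking the independence and singleton conditions on each. Your phrasing of independence as ``no transversal lies on a rank-$2$ flat'' is slightly cleaner than the paper's direct rank computations, but the content is identical.
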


\begin{proof}
Let  $r \ge 3$, 
$W = G(r,r,3)$ and $\CA = \CA(W)$.
We denote the coordinate functions of ${\mathbb C}^3$ by $x,y$ and $z$. 
Furthermore, let $\zeta$  be a primitive $r$-th root of $1$.
There are three families of hyperplanes in $\CA$ given 
as follows. For $i = 0,\ldots, r-1$, we set
\[
A_i := \ker(x-\zeta^i y),  
B_i := \ker(x-\zeta^i z), 
\text{and} \quad  
C_i = \ker(y-\zeta^i z).
\] 
We also consider the hyperplanes
$A_i$, $B_i$, and $C_i$ for $i \in {\mathbb Z}$, 
simply by taking $i$ modulo $r$.

One readily checks that the members $X$ of rank $2$ of 
$L(\CA)$ are given by the following subarrangements $\CA_X$ of $\CA$
(i.e.\  $X = \cap_{H \in \CA_X} H$) :
\[
\{A_0, A_1, \ldots, A_{r-1}\},
\{B_0, B_1, \ldots, B_{r-1}\},
\{C_0, C_1, \ldots, C_{r-1}\},
\]
and
\[
\{A_i,  B_j,  C_k \mid i,j \in \{0,\ldots, r-1\} 
\text{ and } k = j-i \mod r \}.
\]

Note that $\exp \CA(G(r,r,3)) = \{1, r+1, 2(r-1)\}$,
cf.\ \cite[Cor.\ 6.86]{orlikterao:arrangements}.
We claim that the following partition $\pi$ is a factorization of $\CA$:
\[
\pi = (\pi_1, \pi_2 , \pi_3 ) 
:= (\{A_0\}, \{A_1, \ldots, A_{r-1}, B_0, C_0\}, 
\{B_1, \ldots , B_{r-1}, C_1, \ldots , C_{r-1} \}).
\]

First we check the singleton condition
from Definition \ref{def:factored}(ii).
Thanks to Remark \ref{rem:factored}(iii), we only need 
to check this for $X$ of rank $2$.

If $\CA_X = \{A_0, A_1, \ldots, A_{r-1}\}$, then
$\pi_1 \cap \CA_X = \{A_0\}$. 
Likewise, if 
$\CA_X = \{B_0, B_1, \ldots, B_{r-1}\}$,
respectively 
$\CA_X = \{C_0, C_1, \ldots, C_{r-1}\}$,
then 
$\pi_2 \cap \CA_X = \{B_0\}$, 
respectively 
$\pi_2 \cap \CA_X = \{C_0\}$, 
is the desired singleton.
If $\CA_X = \{A_0, B_j, C_{0-j}\}$, 
then $\pi_1 \cap \CA_X = \{A_0\}$. 
Finally, if $\CA_X = \{A_i, B_j, C_{i-j}\}$ for $i,j \not=0 \mod r$,
then $\pi_2 \cap \CA_X = \{A_i\}$ 
and if $\CA_X = \{A_i, B_0, C_i\}$ for $i \not=0 \mod r$,
then $\pi_3 \cap \CA_X = \{C_i\}$ is the desired singleton.

Finally, we need to show that $\pi$ is independent. 
Up to interchanging the roles of $B_j$ and $C_j$, 
we only have to check three cases:
If $Y_1 := A_0 \cap A_i \cap B_j$ where $i, j>0$,  
then we have
\[
r(Y_1)  = r(A_0 \cap \ldots \cap A_{r-1} \cap B_j) =
r(A_0 \cap \ldots \cap A_{r-1}) + 1 = 3.
\]
If $Y_2 := A_0 \cap B_0 \cap B_i$ for $i>0$,
then as in the case for $Y_1$, we see  
\[
r(Y_2)  = r(A_0 \cap B_0 \cap \ldots \cap B_{r-1}) =
r(B_0 \cap \ldots \cap B_{r-1}) + 1 = 3
\]
and likewise if 
$Y_3 := A_0 \cap C_0 \cap C_i$ for $i>0$,
then we get
\[
r(Y_3)  = r(A_0 \cap C_0 \cap \ldots \cap C_{r-1}) =
r(C_0 \cap \ldots \cap C_{r-1}) + 1 = 3.
\]
Finally, for $i>0$, it is immediate that  
$A_0 \cap B_0 \cap C_i = \{0\}$ has rank $3$. 
\end{proof}

Jambu and Paris \cite{jambuparis:factored} 
already observed that the reflection arrangement
$\CA(G(3,3,3))$ is factored but not inductively factored,
see Remark \ref{rem:g333} above. 
Note that  $\CA(G(3,3,3))$ is not 
inductively free, 
cf.\ Theorem \ref{thm:indfreerefl}
and Proposition \ref{prop:indfactoredindfree}.
See also \cite[Ex.\ 3.20]{hogeroehrle:factored}.

\begin{lemma}
\label{lem:grr4}
Let $W = G(r,r,\ell)$ for $r \ge 2$ and $\ell \ge 4$. 
Then $\CA(W)$ is not nice.
In particular, the Coxeter arrangement of $W(D_\ell) = G(2,2,\ell)$
for $\ell \ge 4$ is not nice.
\end{lemma}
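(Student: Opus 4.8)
The plan is to show that $\CA = \CA(G(r,r,\ell))$ fails to be nice for $\ell \ge 4$ by exhibiting a single element $X \in L(\CA)$ whose induced partition $\pi_X$ can never admit a singleton block, no matter how $\CA$ is partitioned into an independent family. Since the property of being nice is purely combinatorial (Remark \ref{rem:factorediscombinatorial}) and forces the block-size multiset to be determined by $\CA$ (Corollary \ref{cor:teraofactored}(ii)), I would first compute the exponents. By \cite[Cor.\ 6.86]{orlikterao:arrangements}, one has $\exp \CA(G(r,r,\ell)) = \{0, r, 2r, \ldots, (\ell-2)r, (\ell-1)(r-1)\}$ in the essential case, so by Remark \ref{rem:exponents} any factorization $\pi = (\pi_1,\ldots,\pi_\ell)$ must have block sizes equal to this multiset. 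This rigidity is the lever: it pins down exactly how many hyperplanes each block contains.

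The heart of the argument is to exploit Lemma \ref{lem:parabolicfactored}, which says niceness passes to every parabolic subarrangement $\CA_X = \CA(W_X)$. The key step is to locate inside $G(r,r,\ell)$ for $\ell \ge 4$ a rank-$3$ parabolic subgroup isomorphic to $G(r,r,3)$ together with an ambient rank-$2$ flat that obstructs the singleton condition in a way that the genuine $G(r,r,3)$ arrangement (Lemma \ref{lem:grr3}) does not suffer. Concretely, I would analyze the rank-$2$ elements $X$ of $L(\CA)$ of the form $\ker(x_i - \zeta^a x_j) \cap \ker(x_i - \zeta^b x_k)$ type intersections; the crucial point is that in rank $\ge 4$ the parabolic $W_X$ for an appropriate codimension-$2$ flat is itself a full $G(r,r,3)$, but the way its hyperplanes sit among \emph{several} such overlapping rank-$3$ subsystems forces every candidate independent partition to violate the singleton requirement at some rank-$2$ restriction. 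By Corollary \ref{cor:teraofactored}(iii), $r(X) = |\{i \mid \pi_i \cap \CA_X \ne \varnothing\}|$, which I would use to count how the large blocks (of size $r, 2r, \ldots$) must meet the rank-$2$ flats, deriving a contradiction with the singleton condition from Definition \ref{def:factored}(ii).

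An alternative, and likely cleaner, route is a reduction by restriction: since $\CA^X$ must be factored for a nice $\CA$ only when we already know heredity, but \emph{parabolic} subarrangements $\CA_X$ always inherit niceness by Lemma \ref{lem:parabolicfactored}, I would pick $X$ so that $W_X \cong G(r,r,\ell-1)$ and set up an induction on $\ell$, with base case $\ell = 4$. For the base case I expect to argue directly: assuming a factorization $\pi$ of $\CA(G(r,r,4))$ exists with the forced block sizes $\{r, 2r, 3(r-1)\}$ on the rank-$3$ part, I would examine the collection of rank-$3$ flats of type $G(r,r,3)$ and show that the singleton blocks they each demand (as in Lemma \ref{lem:grr3}) cannot be chosen consistently across all of them simultaneously, since the distinguished singleton hyperplane $A_0$ needed for one subsystem is forced into a large block by another. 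The main obstacle will be the bookkeeping in this base case: organizing the finitely many rank-$2$ and rank-$3$ flats of $G(r,r,4)$ and verifying that no global assignment of singletons is compatible. I expect this to reduce to a short counting argument once the forced block sizes are in hand, but pinning down the precise incompatibility — rather than merely the failure of one ad hoc partition — is where the real work lies.
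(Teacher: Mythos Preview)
Your reduction to $\ell=4$ via Lemma \ref{lem:parabolicfactored} is correct and matches the paper. But the core argument for $\ell=4$ is missing: you yourself say ``pinning down the precise incompatibility \ldots\ is where the real work lies,'' and the plan to track required singleton choices across overlapping rank-$3$ parabolic subsystems of type $G(r,r,3)$ is only a hope, not an argument. Your exponent bookkeeping is also off: $\CA(G(r,r,\ell))$ is essential of rank $\ell$, so any factorization has $\ell$ nonempty blocks; there is no block of size $0$, and the smallest exponent is $1$, not $0$ or $r$. For $\ell=4$ your list ``$\{r,2r,3(r-1)\}$ on the rank-$3$ part'' has only three entries for a rank-$4$ arrangement.

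The paper's proof of the base case $\ell=4$ is much simpler than your route and uses only rank-$2$ flats, with no exponent computation and no rank-$3$ analysis. With coordinates $x,y,z,t$ set $A_i=\ker(x-\zeta^i y)$, $B_i=\ker(z-\zeta^i t)$, $C_i=\ker(x-\zeta^i z)$, $D_i=\ker(y-\zeta^i t)$. The key is that $\CA_{A_i\cap B_j}=\{A_i,B_j\}$ for all $i,j$, so the singleton condition forces each $A_i$ into a different block from each $B_j$; combined with the singleton condition at $\bigcap_i A_i$ (resp.\ $\bigcap_j B_j$), which prevents all $A_i$ (resp.\ all $B_j$) from lying in a single block, one concludes that \emph{every} block of $\pi$ contains some $A_i$ or some $B_j$. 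Running the identical argument with the $C$'s and $D$'s shows every block also contains some $C_i$ or $D_j$. Hence every block of $\pi$ has at least two elements, contradicting the mandatory singleton block (Remark \ref{rem:factored}(iii)). The trick you are missing is this ``two disjoint pairs of coordinate families'' device, which is available precisely because $\ell\ge 4$.
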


\begin{proof}
Let $\ell \ge 4$ and $r \ge 2$.
It suffices to show the result for $G(r,r,4)$.
For, let $W = G(r,r,\ell)$ for $\ell \ge 5$.
Then noting that $G(r,r,4)$ is a parabolic subgroup of $W$,
the result follows for $W$ 
from Lemma \ref{lem:parabolicfactored}.

So let $r \ge 2$, 
$W = G(r,r,4)$ and $\CA = \CA(W)$.
We denote the coordinate functions of ${\mathbb C}^4$ by $x,y, z$ and $t$. 
Furthermore, let $\zeta$ be a primitive $r$-th root of $1$.
Out of six families of hyperplanes in $\CA$ we  
consider the following four,  given 
as follows. For $i = 0,\ldots, r-1$, set
\[
A_i := \ker(x-\zeta^i y), 
B_i := \ker(z-\zeta^i t),
C_i = \ker(x-\zeta^i z),
\text{and} \quad  
D_i := \ker(y-\zeta^i t). 
\] 
We also consider the hyperplanes
$A_i$, $B_i$, $C_i$ and $D_i$ for $i \in {\mathbb Z}$
simply by taking $i$ modulo $r$.

One readily checks that 
the following subarrangements $\CA_X$ of $\CA$ define 
members $X$ of rank $2$ of $L(\CA)$:
\[
\{A_0, A_1, \ldots, A_{r-1}\},
\{B_0, B_1, \ldots, B_{r-1}\},
\{C_0, C_1, \ldots, C_{r-1}\},
\{D_0, D_1, \ldots, D_{r-1}\},
\]
\[
\{A_i,  B_j \mid i,j \in \{0,\ldots, r-1\} \}
\text{ and, } 
\{C_i,  D_j \mid i,j \in \{0,\ldots, r-1\} \}.
\]

Suppose $\pi = (\pi_1, \pi_2, \pi_3, \pi_4)$ is a factorization of $\CA$.
The singleton condition 
from Definition \ref{def:factored}(ii) applied to 
$X = A_i \cap B_j$ for $i,j \in \{1, \ldots, r-1\}$ 
shows that the $A_i$ and $B_j$ are in distinct 
parts of $\pi$. 
Likewise, the singleton condition applied to 
$X = A_0 \cap A_1 \cap \ldots \cap A_{r-1}$ shows that not 
all the $A_i$ are in 
the same part of $\pi$. 
The same holds  for 
$B_0 \cap B_1 \cap \ldots \cap B_{r-1}$.
Thus, since 
$r \ge 2$, each part $\pi_i$ contains at least 
one of the 
hyperplanes $A_0, A_1, \ldots, A_{r-1}, B_0, B_1, \ldots, B_{r-1}$.

Applying the same argument to the hyperplanes
$C_0, C_1, \ldots, C_{r-1}, D_0, D_1, \ldots, D_{r-1}$ 
implies that each part of $\pi$ contains at least two hyperplanes.
This contradicts 
the fact that $\pi$ admits a part of cardinality $1$,
cf.\ Remark \ref{rem:factored}(iii). Thus
$\CA$ does not admit a nice partition.
\end{proof}

In \cite{terao:factored}, Terao already pointed out that 
the Coxeter arrangement of 
$W(D_4) = G(2,2,4)$ does not admit a factorization,
see Remark \ref{rem:d4} above.

\begin{lemma}
\label{lem:dn}
Let $W$ be of type $G_{33}$, $G_{34}$, $E_6$, $E_7$, or $E_8$. 
Then $\CA(W)$ is not nice.
\end{lemma}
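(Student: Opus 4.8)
The plan is to exploit that niceness passes to parabolic subgroups. By Lemma~\ref{lem:parabolicfactored}, if $\CA(W)$ is nice then $\CA(W_X)$ is nice for every parabolic subgroup $W_X$ of $W$. Reading this contrapositively, it suffices to produce, for each $W$ in the list, a \emph{single} parabolic subgroup $W_X$ whose reflection arrangement fails to be nice. The ready-made source of such non-nice arrangements is Lemma~\ref{lem:grr4}: for every $\ell \ge 4$ and $r \ge 2$ the arrangement $\CA(G(r,r,\ell))$ is not nice; in particular $\CA(W(D_4)) = \CA(G(2,2,4))$ is not nice. Thus the whole statement reduces to the claim that each of $G_{33}, G_{34}, E_6, E_7, E_8$ contains a parabolic subgroup of type $G(r,r,\ell)$ for some $\ell \ge 4$ (e.g.\ of type $D_4$).

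For the Weyl groups $E_6, E_7, E_8$ this is immediate. Here the parabolic subgroups are, up to conjugacy, exactly the standard ones, so I would simply point to the chain of sub-diagrams $D_4 \subset D_5 \subset E_6 \subset E_7 \subset E_8$: the group $W(D_4)$ occurs as a parabolic subgroup of each, and $\CA(W(D_4)) = \CA_X$ is the corresponding subarrangement. As this is not nice by Lemma~\ref{lem:grr4}, Lemma~\ref{lem:parabolicfactored} forces $\CA(E_6), \CA(E_7), \CA(E_8)$ to be non-nice as well.

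For the two exceptional complex groups I would first reduce $G_{34}$ to $G_{33}$, using that $G_{33}$ sits inside $G_{34}$ as a (maximal) parabolic subgroup; it then remains to treat $G_{33}$. For this one consults the classification of the parabolic subgroups of the irreducible complex reflection groups (\cite[\S 6.4, App.\ C]{orlikterao:arrangements}), or checks directly in \CHEVIE, to locate a rank-$4$ parabolic subgroup $W_X \le G_{33}$ whose arrangement $\CA_X = \CA(W_X)$ is one of the non-nice arrangements of Lemma~\ref{lem:grr4}, namely of type $D_4 = G(2,2,4)$ or $G(3,3,4)$. Granting such a parabolic, Lemma~\ref{lem:parabolicfactored} immediately yields that $\CA(G_{33})$, and hence $\CA(G_{34})$, is not nice. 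The one genuine obstacle lies precisely here: for the exceptional complex groups one cannot read a parabolic off a Coxeter diagram, so the existence of a non-nice $G(r,r,\ell)$-parabolic must be extracted from the tables of parabolic types or verified by computer; once it is in hand, the conclusion is purely formal.
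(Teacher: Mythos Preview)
Your approach is correct and essentially the same as the paper's: both argue by the contrapositive of Lemma~\ref{lem:parabolicfactored}, exhibiting in each case a parabolic subgroup whose reflection arrangement is known to be non-nice by Lemma~\ref{lem:grr4}. The paper is slightly more direct, asserting in one stroke that every group on the list (including $G_{33}$ and $G_{34}$) admits a parabolic of type $D_4$, citing \cite[Tables C.14--C.23]{orlikterao:arrangements} for the exceptional complex cases; your intermediate reduction of $G_{34}$ to $G_{33}$ is unnecessary but harmless.
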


\begin{proof}
Let  $W$ be as in the statement.
Then $W$ admits a parabolic subgroup of type $D_4$;
for $G_{33}$, $G_{34}$, see \cite[Table C.14 - C.23]{orlikterao:arrangements}.
By Lemma \ref{lem:grr4},
$\CA(W(D_4))$ is not nice.
It thus follows from Lemma \ref{lem:parabolicfactored}
that $\CA(W)$ is not nice either.
\end{proof}

\begin{lemma}
\label{lem:exceptional1}
Let $W$ be one of $H_3$, $G_{25}$, $H_4$, or $G_{32}$. 
Then $\CA(W)$ is not nice.
\end{lemma}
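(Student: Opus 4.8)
The plan is to rule out niceness for each of the four groups $H_3$, $G_{25}$, $H_4$, and $G_{32}$ by exploiting the combinatorial constraints imposed by Corollary \ref{cor:teraofactored}. The key observation is that if $\CA(W)$ were nice with factorization $\pi = (\pi_1, \ldots, \pi_\ell)$, then by Corollary \ref{cor:teraofactored}(ii) together with Remark \ref{rem:exponents}, the multiset $\{|\pi_1|, \ldots, |\pi_\ell|\}$ is forced to equal the multiset of exponents $\exp \CA(W)$, which is known for each of these reflection arrangements (cf.\ \cite[Table B.1, App.\ C]{orlikterao:arrangements}). Thus the block sizes are completely determined in advance, and the question reduces to whether the hyperplanes can actually be distributed into blocks of these prescribed cardinalities so as to satisfy the independence and singleton conditions of Definition \ref{def:factored}.

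First I would, for each group, reduce to a parabolic argument wherever possible: if $W$ admits a parabolic subgroup $W_X$ whose arrangement is already known to be non-nice, then Lemma \ref{lem:parabolicfactored} immediately yields that $\CA(W)$ is not nice. Checking the tables of parabolic subgroups \cite[App.\ C]{orlikterao:arrangements}, one sees that $H_4$ contains a parabolic of type $H_3$, and $G_{32}$ contains a parabolic of type $G_{25}$; so it suffices to treat the two rank-three cases $H_3$ and $G_{25}$ directly, and then invoke Lemma \ref{lem:parabolicfactored} to propagate non-niceness up to $H_4$ and $G_{32}$ respectively. This is exactly the strategy already used in Lemma \ref{lem:dn}, and it cuts the work down to two genuinely new cases.

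For the base cases $H_3$ and $G_{25}$, which are both rank-three arrangements, I would use Corollary \ref{cor:teraofactored}(iii): for every rank-two flat $X \in L(\CA)$, exactly $r(X) = 2$ of the blocks $\pi_i$ meet $\CA_X$, and the singleton condition of Definition \ref{def:factored}(ii) forces one of these two intersections $\pi_i \cap \CA_X$ to be a singleton. The plan is to enumerate the rank-two flats together with their multiplicities $|\CA_X|$ (the number of hyperplanes through each), then argue that the prescribed block sizes coming from the exponents cannot simultaneously satisfy the singleton requirement at all rank-two flats. Concretely, one counts, for each candidate block, how many hyperplanes of a given rank-two flat it may contain, and derives a contradiction from a double-counting or pigeonhole argument on the flats of maximal multiplicity. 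Since both $H_3$ and $G_{25}$ are small, this is a finite and in principle mechanical check; I would expect to carry it out with the help of a computer algebra system such as \textsf{GAP} or \textsf{SAGE}, recording the intersection lattice data and verifying that no partition into blocks of the exponent-prescribed sizes meets the singleton condition.

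The main obstacle will be the combinatorial case analysis for $G_{25}$: unlike the infinite families $G(r,r,\ell)$, where the rank-two flats fall into clean symmetric families, the exceptional arrangement $G_{25}$ has a less transparent lattice structure, so the contradiction is not likely to come from a single elegant counting identity but rather from ruling out each admissible block-size distribution in turn. The hardest part is therefore organizing the finite verification so that it is both complete and convincing; I would lean on the fact that, by Remark \ref{rem:factorediscombinatorial}, niceness depends only on $L(\CA)$, so the entire argument can be phrased as a purely combinatorial non-existence statement about partitions of the hyperplane set respecting the intersection lattice, which is exactly the kind of claim a computer check can certify exhaustively.
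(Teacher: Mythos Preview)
Your overall strategy matches the paper's: reduce $H_4$ to $H_3$ and $G_{32}$ to $G_{25}$ via Lemma \ref{lem:parabolicfactored}, then handle the two rank-three base cases by examining rank-$2$ flats and the singleton condition. So the plan is sound and would succeed.

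Where you diverge is in the execution of the base cases. You anticipate an exhaustive computer verification ruling out all partitions with the exponent-prescribed block sizes, and you expect $G_{25}$ to require an unstructured case analysis. The paper instead finds, in each of $\CA(H_3)$ and $\CA(G_{25})$, six explicit hyperplanes that fall into two ``triangles'' of three: within each triangle, every pair $H,H'$ satisfies $\CA_{H\cap H'}=\{H,H'\}$ (a rank-$2$ flat of size exactly $2$). The singleton condition then forces the two hyperplanes of each such flat into distinct blocks, so each triangle meets all three blocks of any putative factorization $\pi$. Hence every block contains at least two hyperplanes, contradicting Remark \ref{rem:factored}(iii). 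This avoids any reference to the exponents or to an exhaustive search, and works identically for both groups. Your approach is more systematic and certainly correct, but heavier; the paper's trick is worth knowing because the same ``two disjoint triangles of size-$2$ flats'' pattern recurs in other exceptional cases.
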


\begin{proof}
First we show that $\CA(H_3)$ and $\CA(G_{25})$ are not nice.
It then follows from  Lemma \ref{lem:parabolicfactored}
and \cite[Table C.13]{orlikterao:arrangements} that 
$\CA(H_4)$ and $\CA(G_{32})$ aren't nice either.

First let  $\CA = \CA(H_3)$ and let
$x, y$ and $z$ be the variables in $S$ 
and let $\zeta$ be a primitive $5$th root of unity.
We have 
\begin{align*}
Q(H_3) & = x y z (x + y) (y + z)  (x-(\zeta^3+\zeta^2)y) (x-(\zeta^3+\zeta^2+1)y)  \\
  & (x - (\zeta^3+\zeta^2)y-(\zeta^3+\zeta^2)z) (x - (\zeta^3+\zeta^2+1)y-(\zeta^3+\zeta^2+1)z) \\
 & (x + y + z) (x - (\zeta^3+\zeta^2)y-(\zeta^3+\zeta^2+1)z) (x - (\zeta^3+\zeta^2)y + z)  \\
 & (x + y + (\zeta^3+\zeta^2+2)z) (x + y - (\zeta^3+\zeta^2+1)z) \\
& (x - 2(\zeta^3+\zeta^2+1)y-(\zeta^3+\zeta^2+1)z).
\end{align*}
Set 
\begin{align*}
H_1 & = \ker x, H_2 = \ker y, H_3 = \ker z,
H_4 = \ker (x - (\zeta^3+\zeta^2)y-(\zeta^3+\zeta^2)z), \\
H_5 & = \ker (x + y + (\zeta^3+\zeta^2+2)z),\ \text{ and }
H_6 = \ker (x - 2(\zeta^3+\zeta^2+1)y-(\zeta^3+\zeta^2+1)z).
\end{align*}
One checks that 
each of the following intersections
describes a rank $2$ member $X \in L(\CA)$ with $\vert\CA_X\vert = 2$:
$H_1 \cap H_3$,  $H_1 \cap H_6$,  $H_3 \cap H_6$,  
$H_2 \cap H_4$,   $H_2 \cap H_5$,  and $H_4 \cap H_5$.
Suppose $\pi$ is a factorization of $\CA$.
Then applying the singleton condition of 
Definition \ref{def:factored}(ii) 
to each of these elements of $L(\CA)$
implies that 
each of $H_1, H_3, H_6$, respectively
each of $H_2, H_4, H_5$
must belong to a distinct part of $\pi$.
However, 
as one of the parts of $\pi$ has to have cardinality $1$,
cf.\ Remark \ref{rem:factored}(iii),
this is a contradiction.
Consequently, $\CA$ is not nice.

Next let  $\CA = \CA(G_{25})$ and again let
$x, y$ and $z$ be the variables in $S$ 
and let $\zeta$ be a primitive $3$rd root of unity.
We have 
\begin{align*}
Q(G_{25}) & = x y z (x + y + z) (x + y + \zeta z) (x + y - (\zeta+1) z)   \\
  & (x + \zeta y + z) (x + \zeta y + \zeta z) (x + \zeta y - (\zeta+1) z) \\
  & (x - (\zeta+1) y + z) (x - (\zeta+1) y + \zeta z) (x - (\zeta+1) y - (\zeta+1) z).
\end{align*}
Set 
$H_1  = \ker (x+y+z)$, $H_2 = \ker (x + y + \zeta z)$, 
$H_3 = \ker (x + \zeta y + z)$,
$H_4 = \ker (x + \zeta y - (\zeta+1) z)$,
$H_5  = \ker (x - (\zeta+1) y + \zeta z)$,  and 
$H_6 = \ker  (x - (\zeta+1) y - (\zeta+1) z)$.
One checks that 
each of the following intersections
describes a rank $2$ member $X \in L(\CA)$ with $\vert \CA_X \vert = 2$:
$H_1 \cap H_4$,  $H_1 \cap H_5$,  $H_4 \cap H_5$,  
$H_2 \cap H_3$,   $H_2 \cap H_6$,  and $H_3 \cap H_6$.
Suppose $\pi$ is a factorization of $\CA$.
Then, as above, applying the singleton condition of 
Definition \ref{def:factored}(ii) 
to each of these elements of $L(\CA)$ implies that 
each of $H_1, H_4, H_5$, respectively
each of $H_2, H_3, H_6$
must belong to a distinct part of $\pi$.
However, as one of the parts of $\pi$ has to have cardinality $1$,
cf.\ Remark \ref{rem:factored}(iii), 
this is a contradiction.
Consequently, $\CA$ is not nice.
\end{proof}

\begin{lemma}
\label{lem:g24}
Let $W = G_{24}$.
Then $\CA(W)$ is not nice.
\end{lemma}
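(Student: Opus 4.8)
The plan is to show that $\CA(G_{24})$ is not nice by exploiting the singleton condition of Definition \ref{def:factored}(ii) applied to a carefully chosen collection of rank-$2$ flats, exactly as in the proofs of Lemmas \ref{lem:exceptional1}. The group $G_{24}$ has rank $3$, so any factorization $\pi$ would consist of three parts $(\pi_1,\pi_2,\pi_3)$, and by Remark \ref{rem:factored}(iii) one of these parts must be a singleton. First I would fix explicit coordinates $x,y,z$ on $\BBC^3$ and write down the defining polynomial $Q(G_{24})$, so that the $21$ hyperplanes of $\CA(G_{24})$ are made concrete; this data can be read off from \cite[Appendix C]{orlikterao:arrangements} or computed directly from the root data of $G_{24}$.

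The key combinatorial step is to locate two disjoint triples of hyperplanes, say $\{H_1,H_2,H_3\}$ and $\{H_4,H_5,H_6\}$, such that each of the six pairs within a triple spans a rank-$2$ flat $X$ whose full subarrangement $\CA_X$ consists of \emph{only} those two hyperplanes, i.e. $|\CA_X| = 2$. Whenever $|\CA_X| = 2$ for such a flat, the induced partition $\pi_X$ can have a singleton block only if the two hyperplanes lie in distinct parts of $\pi$. Applying this to all three pairs inside the first triple forces $H_1, H_2, H_3$ into three distinct parts, hence each $\pi_i$ contains exactly one of them; the same argument applied to the second triple forces each $\pi_i$ to contain one of $H_4, H_5, H_6$ as well. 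Consequently every part $\pi_i$ would contain at least two hyperplanes, contradicting the existence of a singleton part. This yields the desired conclusion that $\CA(G_{24})$ admits no factorization.

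The main obstacle will be verifying that the two chosen triples actually have the required property that each internal pair meets in a rank-$2$ flat with $|\CA_X| = 2$ (that is, no third hyperplane of $\CA(G_{24})$ passes through that codimension-$2$ intersection). In $\CA(G_{24})$, many rank-$2$ flats are triple or higher points, so the genuinely ``double'' points must be identified with care; this is where the explicit coordinate description is indispensable. I would check these incidences directly from the linear forms, possibly with computer assistance (\GAP, \Sage, or \CHEVIE), since by Remark \ref{rem:factorediscombinatorial} niceness depends only on $L(\CA)$ and hence the verification is a finite combinatorial matter. Once the six double points are confirmed, the contradiction follows formally and the proof is complete, with no further case analysis needed.
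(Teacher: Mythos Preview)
Your proposed argument mirrors the paper's treatment of $H_3$ and $G_{25}$, but it cannot be carried out for $G_{24}$ because the crucial ingredient---rank-$2$ flats $X$ with $|\CA_X|=2$---does not exist in $\CA(G_{24})$. One can see this directly from the incidence data: through any fixed hyperplane, say the one you would call $H_1$, the rank-$2$ flats are four flats of size $4$ and four flats of size $3$, accounting for all $4\cdot 3 + 4\cdot 2 = 20$ remaining hyperplanes. Since the rank-$2$ flats through $H_1$ partition $\CA\setminus\{H_1\}$, there is no room for a double point on $H_1$, and by the transitivity of $G_{24}$ on $\CA$ there are no double points anywhere. Thus the step where you ``locate two disjoint triples \ldots\ such that each of the six pairs within a triple spans a rank-$2$ flat $X$ with $|\CA_X|=2$'' is impossible, and the argument collapses.

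The paper's actual proof proceeds differently. It uses transitivity to fix the singleton as $\pi_1=\{H_1\}$, and then invokes Corollary~\ref{cor:teraofactored}(iii): for any rank-$2$ flat $X$ through $H_1$, exactly two parts of $\pi$ meet $\CA_X$, so $\CA_X\setminus\{H_1\}$ must lie entirely in $\pi_2$ or $\pi_3$. This yields four blocks of size $3$ and four blocks of size $2$ that must each sit inside one of $\pi_2,\pi_3$. Further rank-$2$ incidences among these blocks force certain pairings, and a parity count then shows $|\pi_2|$ would be even---contradicting $\exp\CA(G_{24})=\{1,9,11\}$ via Remark~\ref{rem:exponents}. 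So the paper's argument exploits the \emph{large} rank-$2$ flats together with an exponent-parity obstruction, rather than the nonexistent small ones.
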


\begin{proof}
Let  $\CA = \CA(G_{24})$ and let
$x, y$ and $z$ be the variables in $S$ 
and let $\zeta$ be a primitive $7$th root of unity.
We have 
\begin{align*}
Q(G_{24}) &= 
(x + (2 \zeta^4 + 2 \zeta^2 + 2 \zeta + 1) y)
(x + (-2 \zeta^4 - 2 \zeta^2 - 2 \zeta - 1) y) \\
& (3x + (-\zeta^4 - \zeta^2 - \zeta + 3) y -2(\zeta^4 + \zeta^2 + \zeta) z)\;  x\\
& (3x- (\zeta^4 + \zeta^2 + \zeta + 4) y -2(\zeta^4 + \zeta^2 + \zeta +1) z)\\
& (3x - 7 y + 4 z)\;  (3x + 7 y - 4 z)\\
& (3x + (2 \zeta^4 + 2 \zeta^2 + 2 \zeta + 1) y -2(\zeta^4 +\zeta^2 + \zeta -1) z)\\
& (7y + (-3 \zeta^4 - 3 \zeta^2 - 3 \zeta + 2) z)\\
& (3x + (-2 \zeta^4 - 2 \zeta^2 - 2 \zeta - 1) y -4(\zeta^4 + \zeta^2 + \zeta -1) z)\\
& (3x + (\zeta^4 + \zeta^2 + \zeta + 4) y + 2(\zeta^4 + \zeta^2 + \zeta + 1) z)\\
& (3x + (\zeta^4 + \zeta^2 + \zeta - 3) y + 2(\zeta^4 + \zeta^2 + \zeta) z)\\
& (7y + (3\zeta^4 + 3 \zeta^2 + 3 \zeta + 5) z)\\
& (3x + (2 \zeta^4 + 2 \zeta^2 + 2 \zeta + 1) y -2( \zeta^4 + \zeta^2 + \zeta +2) z)\\
& (3x + (-2 \zeta^4 - 2 \zeta^2 - 2 \zeta - 1) y + 2( \zeta^4 + \zeta^2 +  \zeta + 2) z)\\
& (7y + (-6 \zeta^4 - 6 \zeta^2 - 6 \zeta - 10) z)\\
& (3x + (-2 \zeta^4 - 2 \zeta^2 - 2 \zeta - 1) y + 2( \zeta^4 + \zeta^2 +  \zeta - 1) z)\\
& (3x + (-2 \zeta^4 - 2 \zeta^2 - 2\zeta - 1) y -4(\zeta^4 + \zeta^2 + \zeta +2) z)\\
& (3x + (2 \zeta^4 + 2 \zeta^2 + 2 \zeta + 1) y + 4( \zeta^4 + \zeta^2 + \zeta + 2) z)\\
& (3x + (2\zeta^4 + 2 \zeta^2 + 2 \zeta + 1) y + 4( \zeta^4 + \zeta^2 + \zeta - 1) z)\\
& (7y + (6 \zeta^4 + 6 \zeta^2 + 6 \zeta - 4) z).
\end{align*}
For simplicity, we enumerate the members of $\CA$ in the order they appear as factors in 
$Q(G_{24})$, i.e., 
$H_1 = \ker \big(x + (2 \zeta^4 + 2 \zeta^2 + 2 \zeta + 1) y\big)$, 
$H_2 = \ker \big(x + (-2 \zeta^4 - 2 \zeta^2 - 2 \zeta - 1) y\big)$, etc.
Suppose  $\CA$ admits a nice partition $\pi = \{\pi_1,\pi_2,\pi_3\}$. 
Since $W$ 
acts transitively on $\CA$, we may assume that $\pi_1 = \{H_1\}$.
The following four subsets $\CA_X$ of $\CA$ describe rank $2$ members $X$ of $L(\CA)$:
$\{H_1, H_{6}, H_{11}, H_{15}\}$, $\{H_1, H_7, H_{12}, H_{17}\}$, 
$\{H_1, H_8, H_{10}, H_{13}\}$, $\{H_1, H_9, H_{14}, H_{18}\}$, $\{H_1,H_2,H_4\}$, $
\{H_1,H_3,H_5\}$, $\{H_1,H_{16},H_{20}\}$ and $\{H_1, H_{19}, H_{21}\}$. 
It follows 
from Corollary \ref{cor:teraofactored}(iii) that each of 
$A:=\{H_{6}, H_{11}, H_{15}\}$, 
$B:=\{H_7, H_{12}, H_{17}\}$, $C:=\{H_8, H_{10}, H_{13}\}$, 
$D:=\{H_9, H_{14}, H_{18}\}$, 
$E:=\{H_2,H_4\}$,
$F:=\{H_3,H_5\}$, 
$G:= \{H_{16},H_{20}\}$ and 
$H:= \{H_{19}, H_{21}\}$ has to be contained in one part of $\pi$.

Clearly, as only $\pi_2$ and $\pi_3$ are candidates, 
at least two of the sets $A$, $B$,
$C$, $D$ have to be in one part. 
Applying the singleton condition 
in Definition \ref{def:factored}(ii) to each of  
$\CA_{X_1} = \{H_{6},H_{9},H_{10}\}$, $\CA_{X_2} = \{H_{6},H_{14},H_{17}\}$, 
$\CA_{X_3} = \{H_{7},H_{8},H_{15}\}$ and $\CA_{X_4} = \{H_{7},H_{13},H_{18}\}$, 
we conclude that 
\begin{eqnarray*}
A \cup B \in \pi_2 &\Leftrightarrow& C \cup D \in \pi_3,\\
A \cup C \in \pi_2 &\Leftrightarrow& B \cup D \in \pi_3, \text{ and}\\
A \cup D \in \pi_2 &\Leftrightarrow& B \cup C \in \pi_3.
\end{eqnarray*}
As the cardinality of the union of two sets in $\{A,B,C,D\}$ is even and 
as each of the 
remaining pairs of hyperplanes $\{E,F,G,H\}$ has to be added 
to either $\pi_2$ or $\pi_3$, it follows that $|\pi_2|$ is even.
But this is a contradiction, since $|\pi_2|\in\{9,11\}$ is odd 
(as $\exp \CA(G_{24}) = \{1,9,11\}$, cf.\ Remark \ref{rem:exponents}).
It follows that $\CA(W)$ is not nice, as claimed.
\end{proof}

\begin{lemma}
\label{lem:g26}
Let $W = G_{26}$.
Then $\CA(W)$ is not nice.
\end{lemma}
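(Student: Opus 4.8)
The plan is to argue by contradiction along the lines of Lemma \ref{lem:g24}, working directly with an explicit model of $\CA = \CA(G_{26})$. First I would fix coordinates $x,y,z$ on ${\mathbb C}^3$ and a primitive third root of unity $\zeta$ (the field of definition of $G_{26}$ is ${\mathbb Q}(\zeta)$), record the defining polynomial $Q(G_{26})$ as a product of $21$ linear forms, and enumerate the hyperplanes $H_1, \ldots, H_{21}$ in the order they occur. Since $\CA$ is free with $\exp \CA(G_{26}) = \{1,7,13\}$, cf.\ \cite[Cor.\ 6.86]{orlikterao:arrangements}, Remark \ref{rem:exponents} shows that any factorization $\pi = (\pi_1,\pi_2,\pi_3)$ of $\CA$ must have block sizes $\{|\pi_1|,|\pi_2|,|\pi_3|\} = \{1,7,13\}$. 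In particular $\pi$ has a unique singleton block, say $\pi_1 = \{H_0\}$, and after relabelling $|\pi_2| = 7$, $|\pi_3| = 13$.

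Unlike $G_{24}$, the group $G_{26}$ does not act transitively on $\CA$: there are two orbits of reflecting hyperplanes, of sizes $12$ (from the reflections of order $3$) and $9$ (from the reflections of order $2$). As $W$ is transitive on each orbit, it suffices to treat two cases according to the orbit of the singleton $H_0$, fixing one representative in each. In either case I would recast the problem as a $2$-colouring of the remaining $20$ hyperplanes by their membership in $\pi_2$ or $\pi_3$, subject to two families of constraints arising from the rank $2$ flats $X \in L(\CA)$. By Corollary \ref{cor:teraofactored}(iii), exactly two of the three parts meet each $\CA_X$. Hence for a rank $2$ flat through $H_0$ all of $\CA_X \setminus \{H_0\}$ receives a single colour, while for a rank $2$ flat not through $H_0$ both colours occur on $\CA_X$ and, by the singleton condition of Definition \ref{def:factored}(ii), one of the two colour classes on $\CA_X$ is a singleton.

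The contradiction then comes from the incidence data: the flats through $H_0$ force the $20$ remaining hyperplanes to break up into monochromatic blocks, and combining these forced blocks with the singleton requirement on the flats avoiding $H_0$ constrains the colour classes so that neither of the prescribed sizes $7$ and $13$ can be realised---most plausibly via a parity clash exactly as for $G_{24}$. The main obstacle is organizing the somewhat intricate incidence geometry of $\CA(G_{26})$---the Hessian-type subconfiguration of the $12$ order-$3$ hyperplanes together with the $9$ order-$2$ hyperplanes---into a clean list of rank $2$ flats for each of the two orbit cases, and verifying that the resulting colouring constraints are genuinely unsatisfiable; this bookkeeping is most safely carried out with the aid of a computer algebra system.
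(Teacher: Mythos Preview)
Your proposal is a reasonable outline, but it is not yet a proof: the crucial step---showing that the colouring constraints are actually unsatisfiable---is left as a speculation (``most plausibly via a parity clash'') together with an appeal to unspecified computer verification. You never exhibit the rank~$2$ flats that force the contradiction, and a parity argument in the style of Lemma~\ref{lem:g24} is not guaranteed to go through here just because $7$ and $13$ are both odd; the incidence structure of $\CA(G_{26})$ is quite different from that of $\CA(G_{24})$, and you would still have to produce monochromatic blocks whose sizes have the right parities. Until that is done, there is a genuine gap.

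The paper's proof is far shorter and avoids both the case split on the two $W$-orbits and any parity bookkeeping. The key observation is this: there is a hyperplane $H_5$ (different from both orbit representatives $H_1$ and $H_4$) that lies in four rank~$2$ flats of cardinality~$2$, namely $\{H_5,H_3\}$, $\{H_5,H_{14}\}$, $\{H_5,H_{19}\}$, $\{H_5,H_{21}\}$, while the four ``spokes'' $H_3,H_{14},H_{19},H_{21}$ themselves form a rank~$2$ flat of cardinality~$4$. Whichever orbit supplies the singleton $\pi_1$, one may put $H_5\in\pi_2$; then Corollary~\ref{cor:teraofactored}(iii) applied to the four size-$2$ flats forces $H_3,H_{14},H_{19},H_{21}\in\pi_3$, and the singleton condition on the size-$4$ flat $\{H_3,H_{14},H_{19},H_{21}\}$ is violated. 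This single five-hyperplane configuration handles both orbit cases at once, with no colouring analysis and no computer search beyond locating the relevant flats. If you want to salvage your approach, the cleanest fix is to look for exactly this kind of ``hub-and-spoke plus a flat through the spokes'' pattern rather than aiming for a global parity obstruction.
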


\begin{proof}
Let  $\CA = \CA(G_{26})$ and let
$x, y$ and $z$ be the variables in $S$ 
and let $\zeta$ be a primitive $3$rd root of unity.
We have 
\begin{align*}
  Q(G_{26}) &= 
  xyz (x - y) (x - \zeta y)
  (x - \zeta^2 y)
  (x - z)
  (x -\zeta z)
  (x - \zeta^2 z)\\
&(y - z)
  (y -\zeta z)
  (y - \zeta^2 z)
  (x + y + z)
  (x + \zeta y + z)
  (x + \zeta^2y + z)\\
&(x + y + \zeta z)
  (x + y + \zeta^2z)
  (x +\zeta^2 y + \zeta^2 z)
  (x + \zeta y + \zeta^2z)\\
&(x + \zeta^2 y + \zeta z)
  (x + \zeta y + \zeta z).
\end{align*}
Again, for simplicity, we enumerate the members of $\CA$ in the order they appear as factors in  $Q(G_{26})$, i.e., 
$H_1 = \ker x$,  $H_2 = \ker y$, etc.
Suppose  $\CA$ admits a nice partition $\pi = \{\pi_1,\pi_2,\pi_3\}$. 
There are two $W$-orbits in $\CA$, represented 
by $H_1$ and $H_4$. So we may assume that the singleton of $\pi$ 
is  either $\pi_1 = \{H_1\}$ or $\pi_1 = \{H_4\}$, respectively. 
Without loss, we may assume that 
$H_5 \in \pi_2$. 
Consider the rank $2$ elements $X_1,\ldots,X_4$ of $L(\CA)$ given by 
$\CA_{X_1} = \{H_5,H_3\}$, $\CA_{X_2} = \{H_5,H_{14}\}$, 
$\CA_{X_3} = \{H_5,H_{19}\}$ and $\CA_{X_4} = \{H_5,H_{21}\}$,
respectively.
It follows from 
Corollary \ref{cor:teraofactored}(iii) that 
$H_3,H_{14},H_{19},H_{21} \in \pi_3$. But this is a contradiction to 
Remark \ref{rem:factored}(iii), since 
$\CA_{H_3\cap H_{14} \cap H_{19} \cap H_{21}} = \{H_3,H_{14},H_{19},H_{21}\}$.
It follows that $\CA(W)$ is not nice, as desired.
\end{proof}

\begin{lemma}
\label{lem:g27}
Let $W = G_{27}$.
Then $\CA(W)$ is not nice.
\end{lemma}

\begin{proof}
Let  $\CA = \CA(G_{27})$ and let
$x, y$ and $z$ be the variables in $S$ 
and let $\zeta$ be a primitive $15$th root of unity.
We have 
\begin{align*}
Q(G_{27}) &= ( 15 x + \left(24 \zeta^{7} - 18 \zeta^{6} - 6 \zeta^{5} + 18 \zeta^{4} - 6 \zeta^{3} + 12 \zeta^{2} - 6 \zeta - 15\right) y \\
          & \qquad  + \left(-4 \zeta^{7} + 8 \zeta^{6} - 4 \zeta^{5} + 2 \zeta^{4} - 4 \zeta^{3} - 12 \zeta^{2} + 6 \zeta\right) z ) \\
&( 3 y + \left(\zeta^{7} + 3 \zeta^{5} - \zeta^{4} - \zeta^{3} + \zeta^{2} - \zeta + 2\right) z ) \\
&( 15 x + \left(-18 \zeta^{6} + 6 \zeta^{5} + 12 \zeta^{4} - 6 \zeta^{3} + 12 \zeta^{2} - 24 \zeta - 3\right) y \\
  & \qquad + \left(-12 \zeta^{6} + 4 \zeta^{5} + 8 \zeta^{4} - 4 \zeta^{3} + 8 \zeta^{2} - 16 \zeta - 2\right) z ) \\
&( 15 x + \left(-12 \zeta^{7} - 18 \zeta^{6} + 12 \zeta^{5} - 6 \zeta^{4} - 6 \zeta^{3} + 12 \zeta^{2} - 18 \zeta + 3\right) y \\
  & \qquad + \left(2 \zeta^{7} + 8 \zeta^{6} - 2 \zeta^{5} + 6 \zeta^{4} + 6 \zeta^{3} - 2 \zeta^{2} - 2 \zeta + 2\right) z ) \\
&( 15 x + \left(12 \zeta^{7} - 18 \zeta^{6} - 6 \zeta^{3} + 12 \zeta^{2} - 9\right) y \\
  &\qquad + \left(-2 \zeta^{7} + 28 \zeta^{6} - 10 \zeta^{5} + 6 \zeta^{3} - 22 \zeta^{2} + 20 \zeta + 4\right) z ) \\
&( 15 x + \left(9 \zeta^{7} - 12 \zeta^{6} - 3 \zeta^{5} + 9 \zeta^{4} - 9 \zeta^{3} + 3 \zeta^{2} - 3 \zeta - 9\right) y \\
  &\qquad + \left(\zeta^{7} - 8 \zeta^{6} + 3 \zeta^{5} - 4 \zeta^{4} - \zeta^{3} + 7 \zeta^{2} - 2 \zeta - 1\right) z ) \\
&( 15 x + \left(12 \zeta^{7} - 18 \zeta^{6} - 6 \zeta^{3} + 12 \zeta^{2} - 9\right) y \\
  &\qquad + \left(-22 \zeta^{7} + 8 \zeta^{6} + 10 \zeta^{5} + 6 \zeta^{3} - 2 \zeta^{2} - 20 \zeta + 14\right) z ) \\
&( 15 x + \left(6 \zeta^{7} + 6 \zeta^{6} + 12 \zeta^{3} + 6 \zeta^{2} + 3\right) y \\
  &\qquad + \left(14 \zeta^{7} - 16 \zeta^{6} + 10 \zeta^{4} - 2 \zeta^{3} + 14 \zeta^{2} - 10 \zeta - 8\right) z ) \\
&( 15 x + \left(-3 \zeta^{7} - 6 \zeta^{6} + 6 \zeta^{5} - 3 \zeta^{4} + 3 \zeta^{3} + 9 \zeta^{2} - 9 \zeta + 3\right) y \\
  & \qquad + \left(3 \zeta^{7} + 6 \zeta^{6} - \zeta^{5} - 2 \zeta^{4} + 7 \zeta^{3} + \zeta^{2} + 4 \zeta + 2\right) z ) \\
&( 15 x + \left(12 \zeta^{7} + 18 \zeta^{6} - 12 \zeta^{5} + 6 \zeta^{4} + 6 \zeta^{3} - 12 \zeta^{2} + 18 \zeta - 3\right) y \\
  & \qquad + \left(-2 \zeta^{7} - 8 \zeta^{6} + 2 \zeta^{5} - 6 \zeta^{4} - 6 \zeta^{3} + 2 \zeta^{2} + 2 \zeta - 2\right) z ) \\
&( 15 x + \left(30 \zeta^{7} - 6 \zeta^{6} - 18 \zeta^{5} + 24 \zeta^{4} - 12 \zeta^{3} - 6 \zeta^{2} + 12 \zeta - 21\right) y \\
  &\qquad + \left(20 \zeta^{7} - 4 \zeta^{6} - 12 \zeta^{5} + 16 \zeta^{4} - 8 \zeta^{3} - 4 \zeta^{2} + 8 \zeta - 14\right) z ) \\
&( 15 x + \left(6 \zeta^{7} + 6 \zeta^{6} + 12 \zeta^{3} + 6 \zeta^{2} + 3\right) y \\
  &\qquad + \left(-26 \zeta^{7} + 4 \zeta^{6} + 10 \zeta^{5} - 20 \zeta^{4} - 2 \zeta^{3} - 6 \zeta^{2} + 12\right) z ) \\
&( 3 y + \left(-2 \zeta^{7} - 2 \zeta^{5} - \zeta^{4} + 2 \zeta^{3} - 2 \zeta^{2} - \zeta + 1\right) z ) \\
&( 15 x + \left(3 \zeta^{7} + 6 \zeta^{6} - 6 \zeta^{5} + 3 \zeta^{4} - 3 \zeta^{3} - 9 \zeta^{2} + 9 \zeta - 3\right) y \\
  & \qquad + \left(-3 \zeta^{7} - 6 \zeta^{6} + \zeta^{5} + 2 \zeta^{4} - 7 \zeta^{3} - \zeta^{2} - 4 \zeta - 2\right) z ) \\
&( x + \left(2 \zeta^{6} + 2 \zeta^{3} + 1\right) y ) \\
&( 15 x + \left(-12 \zeta^{7} + 6 \zeta^{5} - 3 \zeta^{4} - 9 \zeta + 6\right) y \\
  & \qquad + \left(-3 \zeta^{7} - \zeta^{5} - 2 \zeta^{4} - 5 \zeta^{3} - 5 \zeta^{2} + 4 \zeta - 1\right) z ) \\
&( 15 x + \left(-6 \zeta^{7} + 6 \zeta^{6} + 6 \zeta^{5} - 18 \zeta^{4} + 12 \zeta^{3} + 6 \zeta^{2} + 6 \zeta + 9\right) y \\
  &\qquad + \left(-14 \zeta^{7} + 4 \zeta^{6} + 4 \zeta^{5} - 2 \zeta^{4} - 2 \zeta^{3} - 6 \zeta^{2} - 6 \zeta + 6\right) z ) \\
&( 15 x + \left(-12 \zeta^{7} + 18 \zeta^{6} + 6 \zeta^{3} - 12 \zeta^{2} + 9\right) y \\
  &\qquad + \left(22 \zeta^{7} - 8 \zeta^{6} - 10 \zeta^{5} - 6 \zeta^{3} + 2 \zeta^{2} + 20 \zeta - 14\right) z ) \\
&( 15 x + \left(-6 \zeta^{7} - 6 \zeta^{6} - 12 \zeta^{3} - 6 \zeta^{2} - 3\right) y \\
  &\qquad + \left(-14 \zeta^{7} + 16 \zeta^{6} - 10 \zeta^{4} + 2 \zeta^{3} - 14 \zeta^{2} + 10 \zeta + 8\right) z ) \\
&( x + \left(2 \zeta^{7} - 2 \zeta^{6} + 2 \zeta^{2} - 1\right) y ) \
 ( x + \left(-2 \zeta^{6} - 2 \zeta^{3} - 1\right) y ) \\
&( 3 y + \left(-\zeta^{7} + 2 \zeta^{5} + \zeta^{4} + \zeta^{3} - \zeta^{2} + \zeta + 2\right) z ) \\
&( 15 x + \left(6 \zeta^{7} + 6 \zeta^{6} + 12 \zeta^{3} + 6 \zeta^{2} + 3\right) y \\
  &\qquad + \left(14 \zeta^{7} - 16 \zeta^{6} - 10 \zeta^{5} + 20 \zeta^{4} - 22 \zeta^{3} - 6 \zeta^{2} - 18\right) z ) \\
&( 15 x + \left(-6 \zeta^{7} + 3 \zeta^{5} - 9 \zeta^{4} + 3 \zeta + 3\right) y \\
  &\qquad + \left(11 \zeta^{7} - 10 \zeta^{6} - 3 \zeta^{5} + 4 \zeta^{4} - 5 \zeta^{3} + 5 \zeta^{2} + 2 \zeta - 8\right) z ) \\
&( 15 x + \left(18 \zeta^{6} - 6 \zeta^{5} - 12 \zeta^{4} + 6 \zeta^{3} - 12 \zeta^{2} + 24 \zeta + 3\right) y \\
  &\qquad + \left(12 \zeta^{6} - 4 \zeta^{5} - 8 \zeta^{4} + 4 \zeta^{3} - 8 \zeta^{2} + 16 \zeta + 2\right) z ) \\
&( 15 x + \left(-6 \zeta^{7} - 6 \zeta^{6} - 12 \zeta^{3} - 6 \zeta^{2} - 3\right) y \\
  & \qquad + \left(26 \zeta^{7} - 4 \zeta^{6} - 10 \zeta^{5} + 20 \zeta^{4} + 2 \zeta^{3} + 6 \zeta^{2} - 12\right) z ) \\
&( 15 x + \left(-12 \zeta^{7} + 18 \zeta^{6} + 6 \zeta^{3} - 12 \zeta^{2} + 9\right) y \\
  &\qquad + \left(-18 \zeta^{7} + 12 \zeta^{6} + 10 \zeta^{5} - 10 \zeta^{4} + 14 \zeta^{3} + 2 \zeta^{2} - 10 \zeta + 16\right) z ) \\
&( 15 x + \left(-9 \zeta^{7} + 12 \zeta^{6} + 3 \zeta^{5} - 9 \zeta^{4} + 9 \zeta^{3} - 3 \zeta^{2} + 3 \zeta + 9\right) y \\
  &\qquad + \left(-\zeta^{7} + 8 \zeta^{6} - 3 \zeta^{5} + 4 \zeta^{4} + \zeta^{3} - 7 \zeta^{2} + 2 \zeta + 1\right) z )  \\
& x\  ( x + \left(-2 \zeta^{7} + 2 \zeta^{6} - 2 \zeta^{2} + 1\right) y ) \\
&( 15 x + \left(-24 \zeta^{7} + 18 \zeta^{6} + 6 \zeta^{5} - 18 \zeta^{4} + 6 \zeta^{3} - 12 \zeta^{2} + 6 \zeta + 15\right) y \\
  &\qquad + \left(4 \zeta^{7} - 8 \zeta^{6} + 4 \zeta^{5} - 2 \zeta^{4} + 4 \zeta^{3} + 12 \zeta^{2} - 6 \zeta\right) z ) \\
&( 15 x + \left(-18 \zeta^{7} + 6 \zeta^{6} + 12 \zeta^{5} - 6 \zeta^{4} + 12 \zeta^{3} + 6 \zeta^{2} - 18 \zeta + 15\right) y \\
  &\qquad + \left(8 \zeta^{7} - 16 \zeta^{6} - 2 \zeta^{5} + 6 \zeta^{4} - 12 \zeta^{3} + 4 \zeta^{2} - 2 \zeta - 10\right) z ) \\
&( 15 x + \left(6 \zeta^{7} - 3 \zeta^{5} + 9 \zeta^{4} - 3 \zeta - 3\right) y \\
  &\qquad + \left(-11 \zeta^{7} + 10 \zeta^{6} + 3 \zeta^{5} - 4 \zeta^{4} + 5 \zeta^{3} - 5 \zeta^{2} - 2 \zeta + 8\right) z ) \\
&( 15 x + \left(18 \zeta^{7} - 6 \zeta^{6} - 12 \zeta^{5} + 6 \zeta^{4} - 12 \zeta^{3} - 6 \zeta^{2} + 18 \zeta - 15\right) y \\
  &\qquad + \left(-8 \zeta^{7} + 16 \zeta^{6} + 2 \zeta^{5} - 6 \zeta^{4} + 12 \zeta^{3} - 4 \zeta^{2} + 2 \zeta + 10\right) z ) \\
&( 15 x + \left(-6 \zeta^{7} - 6 \zeta^{6} - 12 \zeta^{3} - 6 \zeta^{2} - 3\right) y \\
  &\qquad + \left(-4 \zeta^{7} - 4 \zeta^{6} + 10 \zeta^{4} - 8 \zeta^{3} - 4 \zeta^{2} - 10 \zeta - 2\right) z ) \\
&( 15 x + \left(-12 \zeta^{7} + 18 \zeta^{6} + 6 \zeta^{3} - 12 \zeta^{2} + 9\right) y \\
  &\qquad + \left(12 \zeta^{7} + 12 \zeta^{6} - 10 \zeta^{5} + 10 \zeta^{4} + 4 \zeta^{3} - 8 \zeta^{2} + 10 \zeta - 4\right) z ) \\
&( 15 x + \left(-12 \zeta^{7} + 18 \zeta^{6} + 6 \zeta^{3} - 12 \zeta^{2} + 9\right) y \\
  &\qquad + \left(2 \zeta^{7} - 28 \zeta^{6} + 10 \zeta^{5} - 6 \zeta^{3} + 22 \zeta^{2} - 20 \zeta - 4\right) z ) \\
&( 15 x + \left(-30 \zeta^{7} + 6 \zeta^{6} + 18 \zeta^{5} - 24 \zeta^{4} + 12 \zeta^{3} + 6 \zeta^{2} - 12 \zeta + 21\right) y \\
  &\qquad + \left(-20 \zeta^{7} + 4 \zeta^{6} + 12 \zeta^{5} - 16 \zeta^{4} + 8 \zeta^{3} + 4 \zeta^{2} - 8 \zeta + 14\right) z ) \\
&( 15 x + \left(6 \zeta^{7} + 6 \zeta^{6} + 12 \zeta^{3} + 6 \zeta^{2} + 3\right) y \\
  &\qquad + \left(4 \zeta^{7} + 4 \zeta^{6} - 10 \zeta^{4} + 8 \zeta^{3} + 4 \zeta^{2} + 10 \zeta + 2\right) z ) \\
&( 15 x + \left(12 \zeta^{7} - 18 \zeta^{6} - 6 \zeta^{3} + 12 \zeta^{2} - 9\right) y \\
  &\qquad + \left(-12 \zeta^{7} - 12 \zeta^{6} + 10 \zeta^{5} - 10 \zeta^{4} - 4 \zeta^{3} + 8 \zeta^{2} - 10 \zeta + 4\right) z ) \\
&( 15 x + \left(-6 \zeta^{7} - 6 \zeta^{6} - 12 \zeta^{3} - 6 \zeta^{2} - 3\right) y \\
  &\qquad + \left(-14 \zeta^{7} + 16 \zeta^{6} + 10 \zeta^{5} - 20 \zeta^{4} + 22 \zeta^{3} + 6 \zeta^{2} + 18\right) z ) \\
&( 15 x + \left(12 \zeta^{7} - 6 \zeta^{5} + 3 \zeta^{4} + 9 \zeta - 6\right) y \\
  &\qquad + \left(3 \zeta^{7} + \zeta^{5} + 2 \zeta^{4} + 5 \zeta^{3} + 5 \zeta^{2} - 4 \zeta + 1\right) z ) \\
&( 15 x + \left(6 \zeta^{7} - 6 \zeta^{6} - 6 \zeta^{5} + 18 \zeta^{4} - 12 \zeta^{3} - 6 \zeta^{2} - 6 \zeta - 9\right) y \\
  & \qquad + \left(14 \zeta^{7} - 4 \zeta^{6} - 4 \zeta^{5} + 2 \zeta^{4} + 2 \zeta^{3} + 6 \zeta^{2} + 6 \zeta - 6\right) z ) \\
&( 15 x + \left(12 \zeta^{7} - 18 \zeta^{6} - 6 \zeta^{3} + 12 \zeta^{2} - 9\right) y \\
  &\qquad + \left(18 \zeta^{7} - 12 \zeta^{6} - 10 \zeta^{5} + 10 \zeta^{4} - 14 \zeta^{3} - 2 \zeta^{2} + 10 \zeta - 16\right) z ) \\
&( 3 y + \left(2 \zeta^{7} - 3 \zeta^{5} + \zeta^{4} - 2 \zeta^{3} + 2 \zeta^{2} + \zeta - 2\right) z ).
\end{align*}
For simplicity, once again 
we enumerate the members of $\CA$ in the order they appear as factors in 
$Q(G_{27})$.
Suppose  $\CA$ admits a nice partition $\pi = \{\pi_1,\pi_2,\pi_3\}$. 
Since $W$ acts transitively on $\CA(W)$, we can assume without loss
that $\pi_1 = \{H_1\}$. 
We derive a contradiction by considering the following $25$ 
rank $2$ members $X_i$ of $L(\CA)$:
\[
\begin{array}{ll}
\CA_{X_1} = \{H_1,H_2,H_3,H_4,H_5\}, & \CA_{X_2} = \{H_1,H_6,H_7,H_8,H_9\}, \\
\CA_{X_3} = \{H_1,H_{10},H_{11},H_{12},H_{13}\}, & \CA_{X_4} = \{H_1,H_{14},H_{15},H_{16},H_{17}\}, \\ 
 \CA_{X_5} = \{H_{4}, H_{6}, H_{18}\}, & \CA_{X_6} = \{H_{3}, H_{9}, H_{19}\},\\
\CA_{X_7} = \{H_{2},H_{7},H_{20}\}, &  \CA_{X_8} = \{H_{5},  H_{8},H_{21} \}, \\ 
\CA_{X_9} = \{H_{3}, H_{7},  H_{22}\},  &\CA_{X_{10}} = \{H_{5},H_{6},H_{23}\},\\ 
 \CA_{X_{11}} = \{H_{19}, H_{23}, H_{24}\}, & \CA_{X_{12}} = \{H_{18}, H_{22}, H_{25} \},\\
\CA_{X_{13}} = \{H_{21},H_{22},H_{26}\}, &  \CA_{X_{14}} = \{H_{20}, H_{23}, H_{27}\}, \\
 \CA_{X_{15}} = \{H_{24}, H_{25}, H_{26}, H_{27}\}, & \CA_{X_{16}} = \{H_{11},H_{17},H_{18}\}, \\
 \CA_{X_{17}} = \{ H_{11}, H_{16}, H_{20}\}, & \CA_{X_{18}} = \{ H_{10}, H_{16},H_{21}\},\\
\CA_{X_{19}} = \{H_{10},H_{17},H_{19}\}, &  \CA_{X_{20}} = \{H_{12}, H_{15}, H_{22} \}, \\
 \CA_{X_{21}} = \{H_{13}, H_{15}, H_{23} \}, &\CA_{X_{22}} = \{H_{4},H_{12},H_{25}\}, \\ 
 \CA_{X_{23}} = \{H_{8}, H_{14}, H_{25} \}, & \CA_{X_{24}} = \{H_{3},H_{15},H_{24}\}, \\ 
 \CA_{X_{25}} = \{H_{9}, H_{13}, H_{24} \}.\\
\end{array}
\]
Applying the singleton condition from 
Definition \ref{def:factored}(ii) to 
$\CA_{X_1},\CA_{X_2},\CA_{X_3}$, and $\CA_{X_4}$ shows that 
each of the sets $A:= \{H_2,H_3,H_4,H_5\}$, $B:= \{H_6,H_7,H_8,H_9\}$, 
$C:=\{H_{10},H_{11},H_{12},H_{13}\}$ and $D:=\{H_{14},H_{15},H_{16},H_{17}\}$
is a subset of $\pi_2$ or $\pi_3$. 
First suppose that $A \cup B \subset \pi_2$. Then
the singleton condition Definition \ref{def:factored}(ii)
applied to each of 
$\CA_{X_5},\CA_{X_6},\CA_{X_7},\CA_{X_8},\CA_{X_9}$, and $\CA_{X_{10}}$ 
shows that $H_{18}, H_{19},H_{20},H_{21}, H_{22}, H_{23} \in \pi_3$. The 
same argument 
applied to  $\CA_{X_{11}}, \CA_{X_{12}},\CA_{X_{13}}$, and $\CA_{X_{14}}$
shows that $H_{24}, H_{25}, H_{26}, H_{27} \in \pi_2$. This now leads to a 
contradiction, since $\CA_{X_{15}}$ then violates the singleton condition.
Now suppose that $C \cup D \subset \pi_2$. Then 
Remark \ref{rem:factored}(iii) applied to 
$\CA_{X_{16}},\CA_{X_{17}},\CA_{X_{18}},\CA_{X_{19}},\CA_{X_{20}}$,
and $\CA_{X_{21}}$ 
shows that $H_{18}, H_{19},H_{20},H_{21}, H_{22}, H_{23} \in \pi_3$. This 
however 
leads again to the same contradiction as in the case $A \cup B \subset \pi_2$. 
It follows that exactly two of the sets $A,B,C,D$ have to be 
contained in $\pi_2$.
Now suppose that $A \cup C \subset \pi_2$ and $B \cup D \subset \pi_3$. The singleton
condition applied to $\CA_{X_{22}}$ and $\CA_{X_{23}}$ implies that 
$H_{25} \in \pi_2 \cap \pi_3$, which is absurd.
Finally, suppose that $A \cup D \subset \pi_2$ and 
$B \cup C \subset \pi_3$. The singleton
condition applied to $\CA_{X_{24}}$ and $\CA_{X_{25}}$ implies that 
$H_{24} \in \pi_2 \cap \pi_3$, which is absurd.
It follows that $\CA(G_{27})$ is not nice.
\end{proof}

\begin{lemma}
\label{lem:f4}
Let $W$ be of type $F_4$.
Then $\CA(W)$ is not nice.
\end{lemma}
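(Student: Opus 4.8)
The plan is to argue directly. In contrast to the groups handled in Lemma \ref{lem:dn}, the Coxeter group $W(F_4)$ does \emph{not} contain a parabolic subgroup of type $D_4$: the $D_4$-subsystem of long roots of $F_4$ is a reflection subgroup but not a parabolic one (every parabolic of $F_4$ is conjugate to a standard one, whose diagram is a subpath of the $F_4$-diagram, so is of type $A$, $B$, or $C$). Hence Lemma \ref{lem:parabolicfactored} is unavailable and a hands-on argument is required. I would realize $\CA = \CA(F_4)$ in $\BBC^4$ with coordinates $x_1,x_2,x_3,x_4$ through its three families of reflecting hyperplanes: the four coordinate hyperplanes $\ker x_i$, the twelve long hyperplanes $\ker(x_i-x_j)$ and $\ker(x_i+x_j)$ for $i<j$, and the eight half-sum hyperplanes $\ker(x_1\pm x_2\pm x_3\pm x_4)$, for a total of $24$. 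Since $\exp\CA(F_4)=\{1,5,7,11\}$, Remark \ref{rem:exponents} forces any factorization $\pi=(\pi_1,\pi_2,\pi_3,\pi_4)$ to have parts of sizes $1,5,7,11$. As $W$ acts with exactly two orbits on $\CA$ (the long and the short roots, of size $12$ each), by transitivity I may assume the singleton part $\pi_1$ equals $\{\ker x_1\}$ or $\{\ker(x_1-x_2)\}$ and treat these two cases.

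The main mechanism is the following consequence of Corollary \ref{cor:teraofactored}(iii): writing $\pi_1=\{H\}$, any rank-$2$ flat $X\in L(\CA)$ with $H\in\CA_X$ is met by exactly two parts; one of them is $\pi_1$, so all of $\CA_X\setminus\{H\}$ lies in a single common part. Every other hyperplane $H'$ determines a unique rank-$2$ flat $H\cap H'$ through $H$, so these flats partition $\CA\setminus\{H\}$ into blocks, each of which is monochromatic. Computing the rank-$2$ flats through $H$ shows that in both cases the block-size multiset is $\{3,3,3,2,2,2,2,1,1,1,1,1,1\}$: three triples cut out by rank-$2$ flats of size four (type $B_2$), four pairs coming from the half-sum hyperplanes, and six singletons from the long roots orthogonal to $H$.

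The obstacle is that this block structure alone is consistent with the target, since $\{3,3,3,2,2,2,2,1,1,1,1,1,1\}$ can be grouped into sums $11=3+3+3+2$, $7=2+2+2+1$, $5=1+1+1+1+1$. Hence I would bring in the flats not recorded by the block partition. By Corollary \ref{cor:teraofactored}(iii) again, a rank-$2$ flat $X$ with $H\notin\CA_X$ is met by exactly two parts, so the (monochromatic) blocks touched by $\CA_X$ together occupy at most two parts; and any rank-$3$ flat is met by exactly three parts, confining its blocks to two parts when it contains $H$ and spreading them across all of $\pi_2,\pi_3,\pi_4$ when it avoids $H$. For $H=\ker x_1$, the $B_2$-flats $\{x_i=x_j=0\}$ with $i,j\in\{2,3,4\}$ force each quadruple $\{P_i,P_j,L_{ij}^{+},L_{ij}^{-}\}$ (two coordinate-triples and two long-root singletons) into two parts, while $A_2$-flats such as $\ker x_4\cap\ker(x_1+x_2+x_3+x_4)\cap\ker(x_1+x_2+x_3-x_4)$ couple a coordinate block to two half-sum pairs; analogous flats arise in the second case. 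These confinement and spreading conditions form a web of incidences among the thirteen blocks. I expect the hardest part to be organizing this web into a clean contradiction with the prescribed sizes $1,5,7,11$; in practice one isolates a small explicit family of rank-$2$ and rank-$3$ flats, exactly in the spirit of the proofs of Lemmas \ref{lem:g24} and \ref{lem:g27} for $G_{24}$ and $G_{27}$, forcing a part to acquire the wrong cardinality or some hyperplane to lie in two parts. As the resulting verification is a finite incidence check, it can, if desired, be confirmed with \Sage\ or \Singular.
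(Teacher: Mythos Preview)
Your setup is sound and your mechanism is the right one---it is the same device the paper uses. But you are working much harder than necessary, and you stop short of an actual proof. The paper does not attempt to reconcile all thirteen blocks with the target sizes $\{5,7,11\}$, nor does it invoke rank-$3$ flats or a computer check. Instead it extracts from the blocks through $H$ just \emph{four} of them (one pair and the three triples) and exhibits, for each of the $\binom{4}{2}$ pairs, a rank-$2$ flat $X$ with $|\CA_X|=2$ meeting both. By the singleton condition each such $X$ forces its two blocks into distinct parts of $\pi$; hence the four blocks occupy four distinct parts, which is impossible since only $\pi_2,\pi_3,\pi_4$ are available besides the singleton $\pi_1=\{H\}$. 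In your realization with $H=\ker x_1$ this reads: the triples $B_i=\{\ker x_i,\ker(x_1-x_i),\ker(x_1+x_i)\}$ for $i=2,3,4$ are pairwise separated by flats such as $\{\ker(x_1-x_2),\ker x_3\}$, and any half-sum pair, say $P=\{\ker(x_1+x_2+x_3+x_4),\ker(x_1-x_2-x_3-x_4)\}$, is separated from each $B_i$ by $\{\ker(x_1-x_i),\ker(x_1+x_2+x_3+x_4)\}$. That is the whole argument.

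Two further simplifications: you need not split into two orbit cases---the paper observes (and one checks) that the same four-block pattern is available for every choice of $H_i$; and you need not appeal to the exponents $\{1,5,7,11\}$ at all, since the contradiction is purely a pigeonhole on the number of parts. Your ``web of incidences'' would eventually yield the result, but the paper's route is a two-line counting argument once the four blocks and six separating flats are named.
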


\begin{proof}
Let  $\CA = \CA(F_4)$ and let
$u,x, y$ and $z$ be the variables in $S$.
We have 
\begin{align*}
Q(F_4) & = u x y z (u + x) (x + y) (y + z) (u + x + y) (x + 2 y) (x + y + z)\\
& (u + x + 2 y) (u + x + y + z) (x + 2 y + z) (u + 2 x + 2 y) (u + x + 2 y + z)\\
& (x + 2 y + 2 z) (u + 2 x + 2 y + z) (u + x + 2 y + 2 z) (u + 2 x + 3 y + z) \\
& (u + 2 x + 2 y + 2 z) (u + 2 x + 3 y + 2 z) (u + 2 x + 4 y + 2 z) \\
& (u + 3 x + 4 y + 2 z) (2u + 3 x + 4 y + 2z).
\end{align*}
For simplicity, we enumerate the members of $\CA$ in the order they appear as factors in 
$Q(F_4)$, i.e., 
$H_1 = \ker u$, 
$H_2 = \ker x$, etc.
Suppose that $\pi = (\pi_1,\pi_2,\pi_3,\pi_4)$ is a nice 
partition of $\CA$. The following argument holds for every 
choice of a singleton $\pi_1 = \{H_i\}$. Without loss, let $\pi_1 = \{H_1\}$.
Consider the following rank $2$ members of $L(\CA)$:
\begin{align*}
\CA_{X_1} &= \{H_1,H_2,H_5\},  \CA_{X_2} = \{H_1,H_6,H_8,H_{14}\}, \\
\CA_{X_3} &= \{H_1,H_{10},H_{12},H_{20}\}, \text{ and }
\CA_{X_4} = \{H_1,H_{13},H_{15},H_{22}\}.
\end{align*}
The singleton condition from 
Definition \ref{def:factored}(ii)
applied to each of
$X_1,X_2,X_3$, and $X_4$ 
shows that each of the sets $A:= \{H_2,H_5\}$, $B:= \{H_6,H_8,H_{14}\}$,
$C:=\{H_{10},H_{12},H_{20}\}$, and $D:=\{H_{13},H_{15},H_{22}\}$ is a subset
of $\pi_2, \pi_3$ or $\pi_4$. Now, applying the singleton condition to 
\begin{align*}
 \CA_{X_5} &= \{H_2,H_8\}, \CA_{X_6} = \{H_2,H_{12}\}, \CA_{X_7} = \{H_2,H_{13}\},\\
 \CA_{X_8} &= \{ H_6,H_{20} \}, \CA_{X_9} = \{ H_6,H_{22} \}, \text{ and } 
 \CA_{X_{10}} = \{H_{10},H_{22}\}
\end{align*}
 shows that $A,B,C$ and $D$ have to be in different blocks of the factorization.
This contradicts the fact that $\pi_1 = \{H_1\}$.
It follows that $\CA$ is not nice.
\end{proof}

\begin{lemma}
\label{lem:g29}
Let $W = G_{29}$.
Then $\CA(W)$ is not nice. 
\end{lemma}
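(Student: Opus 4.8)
The approach is to argue directly with the combinatorics of $L(\CA)$, exactly as in the proofs of Lemmas \ref{lem:g24}, \ref{lem:g27}, and \ref{lem:f4}, rather than via a non-nice proper parabolic subgroup as in Lemma \ref{lem:dn}: since $G_{29}$ has rank $4$ and admits no proper parabolic subgroup among the non-nice groups treated so far, Lemma \ref{lem:parabolicfactored} offers no shortcut here. As $W = G_{29}$ is irreducible of rank $4$ acting on $\BBC^4$, any factorization of $\CA = \CA(G_{29})$ has the form $\pi = (\pi_1,\pi_2,\pi_3,\pi_4)$, and by Remark \ref{rem:exponents} the multiset of block sizes equals $\exp\CA(G_{29}) = \{1,9,13,17\}$ (cf.\ \cite[App.\ C]{orlikterao:arrangements}). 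Thus exactly one block is a singleton, while the remaining three all have odd cardinality. Since $W$ acts transitively on the $40$ hyperplanes of $\CA$, I may assume without loss that $\pi_1 = \{H_1\}$.

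First I would produce an explicit defining polynomial $Q(G_{29})$ from the standard realization of $G_{29}$ and label the hyperplanes $H_1,\dots,H_{40}$ by the order in which their linear forms appear, as in the previous lemmas; this step, and the incidence computations that follow, are carried out with a computer algebra system (\GAP/\CHEVIE together with \Singular or \Sage). The combinatorial input I actually need is the list of rank $2$ flats $X \in L(\CA)$ together with their localizations $\CA_X$. The two tools driving the argument are then: Corollary \ref{cor:teraofactored}(iii), which since $\codim X = 2$ forces every such $\CA_X$ to meet exactly two blocks of $\pi$; and the singleton condition of Definition \ref{def:factored}(ii), which for a flat with $H_1 \in \CA_X$ pins down a single hyperplane in some block.

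The heart of the argument is to select a sufficiently rigid family of rank $2$ flats. I would first take flats of the form $\CA_X = \{H_1\}\cup S_X$ through the singleton; by Corollary \ref{cor:teraofactored}(iii) each $S_X$ with $|S_X|\ge 2$ is forced into a single block among $\pi_2,\pi_3,\pi_4$, producing several ``clusters'' $A,B,C,\dots$, each confined to one non-singleton block. I would then add a second layer of rank $2$ flats interlinking these clusters so as to force a contradiction of one of the three types already encountered: either four clusters are driven into four distinct blocks, which is impossible as $\pi_1=\{H_1\}$ meets none of them (as for $F_4$ in Lemma \ref{lem:f4}); or some hyperplane is forced simultaneously into two distinct blocks (as for $G_{27}$ in Lemma \ref{lem:g27}); or the parity of some $|\pi_i|$ is violated, each of $|\pi_2|,|\pi_3|,|\pi_4|$ being odd and equal to one of $9,13,17$ (as for $G_{24}$ in Lemma \ref{lem:g24}).

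The difficulty is organizational and computational rather than conceptual. The real work lies in certifying that the listed subsets $\CA_X$ genuinely are the localizations at rank $2$ flats of the stated cardinalities, and in exhausting the possible distributions of the clusters over the three available blocks $\pi_2,\pi_3,\pi_4$. With $40$ hyperplanes the relevant portion of the intersection lattice is considerably larger than in the rank $3$ cases, so the case analysis is longer and the incidence bookkeeping is precisely where computer verification becomes indispensable; the parity shortcut that truncated the $G_{24}$ argument is unlikely to suffice on its own, and I expect to combine it with the flat-by-flat propagation used for $G_{27}$ and $F_4$.
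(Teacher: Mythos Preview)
Your outline is on target and matches the paper's approach in spirit: assume a factorization $\pi=(\pi_1,\pi_2,\pi_3,\pi_4)$, use transitivity of $G_{29}$ on $\CA$ to fix $\pi_1=\{H_1\}$, and then propagate constraints via Corollary~\ref{cor:teraofactored}(iii) and Definition~\ref{def:factored}(ii) until four disjoint clusters are forced into four distinct blocks, contradicting $\pi_1=\{H_1\}$ (the $F_4$-type contradiction you list). Your exponent computation $\{1,9,13,17\}$ is correct.

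The one substantive divergence is your claim that ``the combinatorial input I actually need is the list of rank~$2$ flats.'' The paper's argument does \emph{not} get by with rank~$2$ information alone. After using rank~$2$ flats through $H_1$ to produce three clusters $A,B,C$ confined to $\pi_2,\pi_3,\pi_4$ respectively, and rank~$2$ flats of size~$2$ to exclude many individual hyperplanes from $\pi_2$ or $\pi_4$, the paper then invokes four rank~$3$ flats $Y_1,\dots,Y_4$ containing $H_1\cup A$ or $H_1\cup C$. Corollary~\ref{cor:teraofactored}(iii) at rank~$3$ says $\CA_{Y_i}$ meets exactly three blocks; combined with the earlier exclusions this pins down four further $5$-element sets $Z_1,\dots,Z_4$, each in a single block. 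A final layer of size~$2$ rank~$2$ flats then separates the $Z_i$ pairwise, giving the contradiction. So the paper's propagation step genuinely uses the rank~$3$ localizations, and your plan as stated would stall without them. The parity idea you mention (all non-singleton blocks odd) is not used in the paper's proof for $G_{29}$; it may well be exploitable, but you would still need enough lattice data to control block membership modulo~$2$, and it is not clear rank~$2$ flats alone provide this.
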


\begin{proof}
Let  $\CA = \CA(G_{29})$ and let
$u, x, y$ and $z$ be the variables in $S$. 
We have 
\begin{align*}
  Q(G_{29}) & = z (u - x + iy + i z) (u - x) (x - y) (u - x + i y - i z) (y + z)
             (u + i x - y + i z)\\
         &   (u - x - i y - i z) (u - y) (u - x - i y + i z) (u - i x + i y + z)
             (u + i x - y - i z)\\
         &   (x + z) (u - i x - y - i z) (u - i x - y + i z) (u + i x - i y + z)
             (y - z) (u - i x + i y - z)\\
         &   (x - z) (u + i x - i y - z) y (u + z) (u + i x + y + i z)
             (u + i x + i y + z) (u + i x + i y - z)\\
         &   (u - z) (u - i x - i y + z) (u - i x + y - i z) (u - i x + y + i z)
              x (u + x + i y + i z) \\
         &   (u + x - i y - i z) (u + x - i y + i z) (u + i x + y - i z) 
             (u - i x - i y - z) \\
         &   (u + x + i y - i z) (x + y) u (u + y) (u + x).
\end{align*}
Again, for simplicity, we enumerate the members of $\CA$ in the order they appear as factors in  $Q(G_{29})$, i.e., 
$H_1 = \ker z$,  $H_2 =\ker (u - x + iy + i z)$, etc.
Suppose that $\pi = (\pi_1,\pi_2,\pi_3,\pi_4)$ is a nice 
partition of $\CA$. 
We may assume that $\pi_1 = \{H_1\}$ is the singleton, since $W$ is
transitive on $\CA$.  Corollary \ref{cor:teraofactored}(iii) applied to 
$\CA_{H_1 \cap H_6} = \{H_1,H_6,H_{17},H_{21}\}$, 
$\CA_{H_1 \cap H_{22}} = \{H_1,H_{22},H_{26},H_{38}\}$ and 
$\CA_{H_1 \cap H_{13}} = \{H_1,H_{13},H_{19},H_{30}\}$ shows that each of the 
sets $A:= \{H_6,H_{17},H_{21}\}$, $B:= \{H_{22},H_{26},H_{38}\}$ and
$C:= \{H_{13},H_{19},H_{30}\}$ is contained in one of the parts of $\pi$. 
Moreover, these sets are in different parts of $\pi$, thanks to 
Remark \ref{rem:factored}(iii) applied to 
$\CA_{H_6 \cap H_{38}} = \{H_6, H_{38}\}$, 
$\CA_{H_6 \cap H_{30}} = \{H_6, H_{30}\}$ and
$\CA_{H_{13} \cap H_{38}} = \{H_{13}, H_{38}\}$. 
So lets assume that $A \subset \pi_2$,
$B \subset \pi_3$ and $C \subset \pi_4$. 
Consider $\CA_X = \{H,H'\}$, where $H \in A$ and $H'$ is one of the following 
hyperplanes
$H_2$, $H_3$, $H_5$, $H_8$, $H_{10}$, $H_{13}$, 
$H_{19}$, $H_{22}$, $H_{26}$, $H_{30}$, $H_{31}$, $H_{32}$, $H_{33}$, 
$H_{36}$, $H_{38}$ and $H_{40}$. 
Thanks to Definition \ref{def:factored}(ii), it follows that none of 
the $H'$ is contained in $\pi_2$. 
Using the same 
argument, 
considering $\CA_X = \{H,H'\}$, where this time
$H \in C$ and $H'$ is one of the following 
hyperplanes 
$H_{6}$, $H_{7}$, $H_{9}$, $H_{12}$, $H_{14}$, $H_{15}$, $H_{17}$, 
$H_{21}$, $H_{22}$, $H_{23}$, $H_{26}$, $H_{28}$, $H_{29}$, $H_{34}$, 
$H_{38}$ and $H_{39}$, it follows 
again from Definition \ref{def:factored}(ii) that none of the $H'$ is 
contained in $\pi_4$.

Next we consider the following rank $3$ members $Y_i$ of $L(\CA)$:
\begin{align*}
\CA_{Y_1} & = \{H_1, H_2, H_3, H_5, H_6, H_8, H_{10}, H_{17}, H_{21}\}, \\
\CA_{Y_2} & = \{H_1, H_6, H_{17}, H_{21}, H_{31}, H_{32}, H_{33}, H_{36}, H_{40}\}, \\
\CA_{Y_3} & = \{H_1, H_7, H_9, H_{12}, H_{13}, H_{14}, H_{15}, H_{19}, H_{30}\},
\text{ and } \\
\CA_{Y_4} & = \{H_1, H_{13}, H_{19}, H_{23}, H_{28}, H_{29}, H_{30}, H_{34}, H_{39}\}.
\end{align*}

Now using the information above, it follows from 
Corollary \ref{cor:teraofactored}(iii) that each of the following four sets 
is contained in one of the parts of $\pi$. 
\begin{align*}
Z_1 & := \{H_2,H_3,H_5,H_8,H_{10} \}, 
& Z_2  := \{H_{31},H_{32},H_{33},H_{36},H_{40} \}, \\ 
Z_3 & := \{H_7,H_9,H_{12},H_{14},H_{15}\}, \text{ and }
& Z_4 := \{H_{23},H_{28},H_{29},H_{34},H_{39}\}.
\end{align*}

The singleton condition 
from Definition \ref{def:factored}(ii) applied to 
each of $\{H_2,H_{12}\}$, $\{H_2,H_{28}\}$,
$\{H_3,H_{31}\}$, $\{H_7,H_{32}\}$, $\{H_7,H_{39}\}$ and $\{H_{23},H_{36}\}$ 
shows that each of the $Y_i$'s lies in a different part of $\pi$.
But this contradicts the fact that $\pi_1 = \{H_1\}$.
It follows that $\CA$ is not nice.
\end{proof}

\begin{lemma}
\label{lem:g31}
Let $W = G_{31}$.
Then $\CA(W)$ is not nice.
\end{lemma}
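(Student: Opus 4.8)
The plan is to argue directly, in the spirit of the proofs of Lemmas \ref{lem:g29} and \ref{lem:g27}. The group $G_{31}$ has rank $4$, and none of the reflection groups whose arrangements we have already shown to be non-nice occurs as a \emph{proper} parabolic subgroup of $G_{31}$: the natural candidates $D_4$ and $G_{29}$ themselves have rank $4$, while the non-nice rank $3$ groups $G_{24}, G_{25}, G_{26}, G_{27}$ do not even divide the order of $G_{31}$. Thus Lemma \ref{lem:parabolicfactored} offers no shortcut and a hands-on analysis of the full $60$-hyperplane arrangement is unavoidable. First I would produce, with the aid of a computer algebra system (\CHEVIE{} or \Sage), the defining polynomial $Q(G_{31})$ and thereby a fixed enumeration $H_1, \ldots, H_{60}$ of the hyperplanes of $\CA = \CA(G_{31})$, together with the list of rank $2$ and rank $3$ flats of $L(\CA)$, each recorded via the subarrangement $\CA_X$ it supports.

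Suppose, for a contradiction, that $\pi = (\pi_1, \pi_2, \pi_3, \pi_4)$ is a nice partition of $\CA$. By Remark \ref{rem:exponents} the block sizes are the exponents $\exp \CA(G_{31}) = \{1, 13, 17, 29\}$; in particular all four are odd and exactly one block, say $\pi_1$, is a singleton. Since $W = G_{31}$ acts transitively on $\CA$, I may assume $\pi_1 = \{H_1\}$. The engine of the argument is the interplay of Corollary \ref{cor:teraofactored}(iii), which forces, for every $X \in L(\CA)$, the number of blocks of $\pi$ meeting $\CA_X$ to equal $r(X)$, with the singleton condition of Definition \ref{def:factored}(ii) and Remark \ref{rem:factored}(iii). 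Applied to a rank $2$ flat $X$ with $H_1 \in \CA_X$, Corollary \ref{cor:teraofactored}(iii) allows only two blocks to meet $\CA_X$; one of them is $\pi_1$ (via $H_1$), so \emph{all} remaining hyperplanes of $\CA_X$ lie in a single one of $\pi_2, \pi_3, \pi_4$. This clusters a large portion of the $60$ hyperplanes into rigid ``lumps'', each of which must reside entirely inside one block.

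I would then assemble these lumps along carefully chosen rank $2$ and rank $3$ flats. A rank $2$ flat $\CA_X = \{H, H'\}$ meeting two different lumps forces, by Definition \ref{def:factored}(ii), those lumps into distinct blocks, while a rank $3$ flat together with Corollary \ref{cor:teraofactored}(iii) (now with $r(X) = 3$) propagates the assignment to further hyperplanes, exactly as with $\CA_{Y_1}, \ldots, \CA_{Y_4}$ in the proof of Lemma \ref{lem:g29}. The goal is to exhibit four mutually disjoint clusters that are forced into four pairwise distinct blocks; since one block is the singleton $\pi_1 = \{H_1\}$ and only $\pi_2, \pi_3, \pi_4$ remain, this pigeonhole collision is the desired contradiction. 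Alternatively, as for $G_{24}$ in Lemma \ref{lem:g24}, one may instead force one of $\pi_2, \pi_3, \pi_4$ to have even cardinality, which contradicts that the exponents $13, 17, 29$ are all odd. Either route yields that $\CA(G_{31})$ admits no nice partition.

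The main obstacle is organizational rather than conceptual: with $60$ hyperplanes and a correspondingly large intersection lattice, the difficulty lies in selecting a small, well-chosen family of flats through the fixed singleton $H_1$ whose forced block assignments are guaranteed to collide, and in verifying the incidences among them. I expect this step to be genuinely computer-assisted, with the flats $\CA_X$ and their ranks computed and checked in \CHEVIE{} or \Sage, and the human input being the choice of a configuration of flats that is compact enough to present in the proof yet rich enough to exclude every admissible distribution of the lumps over the three non-singleton blocks.
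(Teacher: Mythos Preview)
Your plan is correct and is exactly the approach the paper takes: fix $\pi_1=\{H_1\}$ by transitivity, use Corollary~\ref{cor:teraofactored}(iii) on rank~$2$ flats through $H_1$ to produce lumps that each lie in a single block, separate them via size-$2$ rank~$2$ flats, and obtain a pigeonhole contradiction.  In fact the paper's argument is shorter than you anticipate: no rank~$3$ flats and no parity trick are needed; three lumps $A,B,C$ already fill $\pi_2,\pi_3,\pi_4$, and then a single further hyperplane $H_4\neq H_1$ is shown, via three size-$2$ rank~$2$ flats, to be excluded from every block, which is absurd.
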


\begin{proof}
Let  $\CA = \CA(G_{31})$ and let
$u, x, y$, and $z$ be the variables in $S$. 
We have 
\begin{align*}
Q(G_{31}) & = u (u + i x) (u - x) (u + x + y + z) (x - y) (u -i x) (u + x) 
          (u - x - y - z) x \\
       & (u - x -i y -i z) (u + iy) (u + x -iy -iz) (u - y) (u - x + y + z)
         ( u - x + iy + iz) \\ 
       & (x + iy) (y + z) (u -ix - y -iz) (u + x + iy + iz) (u + ix -iy + z)
         (u -iy) (u + y)\\
       &  y (u -i x + y -iz) (u + x - y + z) (u + ix - y + iz) (x -i y)
         (x + z) (u + ix + y + iz) \\
       & (u -ix + iy + z) (u + x - y - z) (u -ix + iy - z) (u - x + y - z)
         (u +ix -iy - z) \\
       & (u + ix + iy - z) (x + y) (u - x + iy -z) (u + ix + y -iz) (u -iz) 
         (u + ix - y -iz)\\
       & (u + x + iy -iz) (u + ix + iy + z) (u + z) (u -ix -iy + z)
         (u - x - y + z) \\
       & (u -ix - y + iz) (u -ix + y + iz) (u - x -iy + iz) (u -ix -iy - z) 
         (x -iz) (x + iz) \\
       & (y + iz) (u + iz) (u + x -iy + iz)(y -iz)(u - z) (u + x + y - z) 
         (x - z) z (y - z).
\end{align*}
Again, for simplicity, we enumerate the members of $\CA$ in the order they appear as factors in  $Q(G_{31})$, i.e., 
$H_1 = \ker u$,  $H_2 =\ker (u + i x)$, etc.
Suppose that $\pi = (\pi_1,\pi_2,\pi_3,\pi_4)$ is a nice 
partition of $\CA$. 
Since $G_{31}$ acts transitively on $\CA$, we may 
assume that $\pi_{1} = \{H_1\}$. Then Corollary \ref{cor:teraofactored}(iii) applied to 
$\CA_{H_1 \cap H_2} = \{ H_1,H_2,H_3,H_6,H_7,H_9\}$, 
$\CA_{H_1 \cap H_{11}} = \{ H_1, H_{11}, H_{13}, H_{21}, H_{22}, H_{23}\}$
and $\CA_{H_1 \cap H_{39}} = \{H_1, H_{39}, H_{43}, H_{53}, H_{56}, H_{59}\}$ 
shows that each of $A:= \{H_2,H_3,H_6,H_7,H_9\}$, 
$B := \{H_{11}, H_{13}, H_{21}, H_{22}, H_{23}\}$ and 
$C:= \{H_{39}, H_{43}, H_{53}, H_{56}, H_{59}\}$ is a subset of one of the
parts of $\pi$. Moreover, $A,B,C$ have to be in distinct parts of $\pi$,
thanks to Remark \ref{rem:factored}(iii) applied to 
$\CA_{H_2 \cap H_{23}} =\{H_2,H_{23}\}$, 
$\CA_{H_2 \cap H_{59}} = \{H_2,H_{59}\}$ and 
$\CA_{H_{11}\cap H_{59}} = \{H_{11}, H_{59}\}$.
But now applying 
the singleton condition from 
Remark \ref{rem:factored}(iii) to  
$\CA_{H_3\cap H_4} = \{H_3, H_4\}$,
$\CA_{H_4\cap H_{13}} = \{H_4,H_{13}\}$ and 
$\CA_{H_4\cap H_{56}} = \{H_4,H_{56}\}$ shows that 
$H_4 \notin \pi_1\cup\pi_2\cup \pi_3 \cup \pi_4 = \CA$
which is absurd.
We conclude that $\CA$ is not nice.
\end{proof}

\subsection{Proof of Theorem \ref{thm:heredfactored}.}
\label{ss:heredfactored}

Thanks to Corollary \ref{cor:product-heredindfactored},
the question of the presence of a 
hereditary (inductive) factorization reduces 
to the case when $\CA$ is irreducible. 
Thus we may assume that $W$ is irreducible.

The reverse implication of Theorem \ref{thm:heredfactored} is clear.
So suppose that $W$ is irreducible so that 
$\CA = \CA(W)$ is nice.
We need to show that 
$\CA^X$ is also nice for every $X \in L(\CA)$.
If $\CA$ is supersolvable, then so is 
$\CA^X$ for every $X \in L(\CA)$, 
by \cite[Prop.\ 3.2]{stanley:super}.
Consequently, $\CA^X$ is factored again, 
thanks to Proposition \ref{prop:ssfactored}.
For $W$ of rank $3$, the result follows 
from Lemma \ref{lem:3-arr}.
Thus, 
Theorem \ref{thm:heredfactored} follows from 
Theorem \ref{thm:factoredrefl}.

\bigskip
Finally, we comment on questions of
computations underlying this work.

\begin{remark}
\label{rem:computations}
In order to establish several of our results 
we first use the functionality for complex reflection groups 
provided by the   \CHEVIE\ package in   \GAP\ 
(and some \GAP\ code by J.~Michel \cite{michel:development})
(see \cite{gap3} and \cite{chevie})
in order to obtain explicit 
linear functionals $\alpha$ defining the hyperplanes 
$H = \ker \alpha$ of the reflection arrangement
$\CA(W)$. 
We then use the functionality of
\Sage\  (\cite{sage}) to explicitly 
determine the intersection lattice
$L(\CA(W))$ in the relevant instances,
see Lemmas \ref{lem:g24} - \ref{lem:g31}.
\end{remark}





\bigskip

\bibliographystyle{amsalpha}

\newcommand{\etalchar}[1]{$^{#1}$}
\providecommand{\bysame}{\leavevmode\hbox to3em{\hrulefill}\thinspace}
\providecommand{\MR}{\relax\ifhmode\unskip\space\fi MR }
\providecommand{\MRhref}[2]{%
  \href{http://www.ams.org/mathscinet-getitem?mr=#1}{#2} }
\providecommand{\href}[2]{#2}


\end{document}